\newtheorem{thm}{Theorem}[section]
\newtheorem{lem}[thm]{Lemma}
\newtheorem{prop}[thm]{Proposition}
\newtheorem{defn}[thm]{Definition}
\newtheorem{rem}[thm]{Remark}
\numberwithin{equation}{section}
\begin{document}

\title{Extensions of solvable Lie algebras with naturally graded filiform nilradical}
\author{Khudoyberdiyev A.Kh., Sheraliyeva S.A.}
\address{[A.Kh. Khudoyberdiyev]
Institute of Mathematics Academy of Sciences of Uzbekistan,  National University of Uzbekistan, Tashkent, 100174, Uzbekistan.}
\email{khabror@mail.ru}
\address{[S.A. Sheraliyeva] Institute of Mathematics Academy of Sciences of Uzbekistan,  National University of Uzbekistan, Tashkent, 100174, Uzbekistan.}
\email{abdiqodirovna@mail.ru}

\subjclass[2010]{17B30, 17B01, 17B66}

\keywords{nilpotent Lie algebras, solvable Lie algebras, filiform Lie algebras, cental extension of nilpotent Lie algebras, extension of solvable Lie algebras, nilradical.}

\begin{abstract}
In this work we consider extensions of solvable Lie algebras with naturally graded filiform nilradicals.
Note that there exist two naturally graded filiform Lie algebras $n_{n, 1}$ and $Q_{2n}.$
We find all one-dimensional central extensions of the algebra $n_{n, 1}$ and show that any extension of
$Q_{2n}$ is split.
After that we find one-dimensional extensions of solvable Lie algebras with nilradical $n_{n, 1}$. We prove that there exists a unique non-split central extension of  solvable Lie algebras with nilradical $n_{n, 1}$ of maximal codimension. Moreover, all one-dimensional extensions of solvable Lie algebras with nilradical $n_{n, 1}$ whose codimension is equal to one are found and compared these solvable algebras with the solvable algebras with nilradicals are one-dimensional central extension of algebra $n_{n, 1}$.

\end{abstract}

\maketitle

\section{Introduction}

From Levi's theorem it is well-known that any finite-dimensional Lie algebra $L$ can be decomposed in a unique manner
into a semi-direct sum of a semi-simple Lie algebra $S$ and a solvable ideal $R$, its radical \cite{Jac}.
Thus, the investigation and classification of solvable Lie algebras is an essential step in the theory of finite-dimensional Lie algebras over the field of characteristic zero.
Moreover, the classification of finite-dimensional solvable Lie algebras was reduced to the nilpotent ones. For many years, researchers have used varios method to classify nilpotent and solvable Lie algebras and studied the problem of classification of low-dimensional Lie algebras. One of the effective methods of classifying low-dimensional nilpotent Lie algebras is the method of extensions.
There are several types of extensions, such as trivial, central, split and others. Central extension is widely used in the classification of finite-dimensional nilpotent algebras.

Central extensions are needed in physics, because the symmetry group of a quantized
system is usually a central extension of the classical symmetry group, similarly the corresponding symmetry Lie algebra of the quantum system is, in general, a central extension of the classical
symmetry algebra.  First, Skjelbred and Sund used method of central extension to obtain a classification of
nilpotent Lie algebras \cite{Sund}. But until the works \cite{degr3}, \cite{degr2}, \cite{Gong}  some researchers had suspected that this method requires very much computation and had not concentrated on this method. In \cite{degr3} an algorithm for how to use Skjelbred-Sund method is given and introduced some notations which are very suitable to use central extensions method.
Moreover, by A. Hegazi and others the analogue of Skjelbred-Sund method was presented for the Jordan and Malcev algebras \cite{ha16, hac16}.
After that in recent years central extensions method have been used to the classification of various types of nilpotent algebras,
and classification of many classes of low-dimensional nilpotent algebras is obtained \cite{ack, AKKS, cfk182, ckkk2, hac18, jkk19, kkk}.
Moreover, all central extensions of filiform associative algebras were classified in \cite{kkl18}, central extensions of null-filiform and some filiform Leibniz algebras were
classified in \cite{omirov, is11}, and all central extensions of filiform Zinbiel algebras were classified in \cite{CKKK}.

It should be noted that in \cite{Sund2} by T. Sund the method of central extension is generalized for the solvable Lie algebras.
In  \cite{Sund3} this generalized method of central extensions was applied to get a description of $n$-dimensional solvable Lie algebras with $(n-1)$-dimensional filiform nilradicals over the field of real numbers.
T. Sund called such type of real solvable Lie algebras as filiform solvable Lie algebras and proved that any such $(n+1)$-dimensional filiform solvable Lie algebra is a one-dimensional extension of $n$-dimensional filiform solvable Lie algebra. However, there are not many works in which generalized method of central extension is used for the classification of solvable Lie algebras.

Since the solvable Lie algebras play significant role in the theory of finite-dimensional Lie algebras, there are many techniques to the classification of solvable Lie algebras.
One of the effective methods was introduced by G.M. Mubarakzjanov, which he gave an approach for the investigation of solvable Lie algebras by using their
nilradicals and nil-independent derivations of niradical \cite{Mub}.
Owing to a result of \cite{Mub}, in the papers \cite{AnCaGa3, An2, Ngomo, Snobl, RubWint, Wint}
classification of solvable Lie algebras with the given nilradicals such as abelian, filiform, quasi-filiform and others is obtained.

Filiform Lie algebras are very important subclass in the class of nilpotent Lie algebras and these have the maximal index of nilpotency. Several works are devoted to the classification of filiform Lie algebras \cite{GJK, GK, Ver}.  Natural gradations of nilpotent algebras are very helpful in investigations of properties of those
algebras in general case without restriction on the gradation.
This technique provides a rather deep information on the algebra and
it is more effective when the length of the natural gradation is
sufficiently large. It is well known that up to isomorphism there exist two types of naturally-graded filiform Lie algebras \cite{Ver}.
Solvable Lie algebras with naturally graded filiform nilradicals are classified in \cite{AnCaGa3}, \cite{SnWi}.

The purpose of this article is to find all one-dimensional extension of solvable Lie algebras with naturally graded filiform nilradicals. In order to achieve our goal, we have organized the paper as follows: in Section 2,
we present necessary definitions and results that will be used in the rest of the paper.
We recall the Skjelbred-Sund central extension method for
nilpotent Lie algebras after then we give generalized method of central extension for solvable Lie algebras.
In Section 3, we give all one-dimensional central extensions of naturally graded filiform Lie algebras.
In Section 4, we obtain extension of solvable Lie algebras with naturally graded filiform nilradicals, whose codimension of nilradical is maximal.
In Section 5, we find all one-dimensional extensions of solvable Lie algebras with naturally graded filiform nilradicals with a codimension one.
Finally, in Conclusion Section, we compare the solvable
Lie algebras which are obtained in the Section 4 and Section 5 with the solvable Lie algebras with nilradicals are one-dimensional central extension of naturally graded filiform Lie algebra.
Throughout the paper all the spaces and algebras are assumed finite-dimensional and over the field of complex numbers.

\section{Preliminaries}

In this section we give necessary definitions and preliminary results.


\begin{defn}  An algebra $(L,[-,-])$ over a field $\mathbb{F}$ is called Lie algebra if for any $x,y,z \in L$ the following identities
$$[x,[y,z]]+[y,[z,x]]+[z,[x,y]]=0$$
$$[x,x]=0$$
hold.

\end{defn}

For an arbitrary Lie algebra $L$ we define the \emph{derived} and \emph{central series} as follows:
$$L^{[1]}=L, \ L^{[s+1]}=[L^{[s]},L^{[s]}], \ s\geq 1,$$
$$L^{1}=L, \ L^{k+1}=[L^{k},L], \ k\geq 1.$$

\begin{defn} An $n$-dimensional Lie algebra $L$ is
called \emph{solvable (nilpotent)} if there exists $s\in {N}$ ($k\in {N}$) such that $L^{[s]}=0$ ($L^{k}=0$).
Such minimal number is called \emph{index of solvability} (\emph{nilpotency}).
\end{defn}

The maximal nilpotent ideal of a Lie algebra is said to be the nilradical of the algebra.

\begin{defn}
 An $n$-dimensional Lie algebra $L$ is said to be \emph{filiform} if $dim L^i=n-i$, for $2\leq i\leq n$.
\end{defn}

%

Now let us define a natural gradation for the nilpotent Lie algebras.

\begin{defn} Given a nilpotent Lie algebra $L$ with nilindex $s$, put $L_i=L^i/L^{i+1}, \ 1 \leq i\leq s-1$, and $Gr(L) = L_1 \oplus
L_2\oplus\dots \oplus L_{s-1}$.
Define the product in the vector space $Gr(L)$ as follows:
$$[x + L^{i+1}, y + L^{j+1}]: = [x, y] + L^{i+j+1},$$
where $x \in L^{i} \setminus L^{i+1},$ $y \in L^{j} \setminus L^{j+1}.$ Then $[L_i,L_j]\subseteq L_{i+j}$ and we obtain the graded algebra $Gr(L)$. If $Gr(L)$ and $L$ are isomorphic, then we say that the algebra $L$ is naturally graded.
\end{defn}

It is well known that there are two types of naturally graded filiform Lie algebras. In fact, the second type will appear only in the case when the dimension of the algebra is even.

\begin{thm} \cite{Ver} \label{thm2.8} Any naturally graded filiform Lie algebra is isomorphic to one of the following non isomorphic algebras:
$$\begin{array}{ll}
n_{n,1}:&\left\{ [e_i, e_1]=-[e_1, e_i]=e_{i+1}, \quad 2\leq i \leq n-1.\right.\\[3mm]
Q_{2n}:&\left\{\begin{array}{ll}
[e_i, e_1] =  -[e_1, e_i]=e_{i+1},& 2\leq i \leq 2n-2,\\[2mm]
[e_i, e_{2n+1-i}]  =  -[e_{2n+1-i}, e_i]=(-1)^i e_{2n},& 2\leq i
\leq n.
\end{array}\right.
\end{array}$$
\end{thm}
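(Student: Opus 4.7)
The plan is to exploit the natural gradation $L = L_1 \oplus L_2 \oplus \cdots \oplus L_{N-1}$ of an $N$-dimensional naturally graded filiform Lie algebra $L$, together with the Jacobi identity and a normalization by graded automorphisms, to isolate exactly two isomorphism classes.

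First, the filiform condition $\dim L^i = N - i$ forces $\dim L_1 = 2$ and $\dim L_i = 1$ for $2 \le i \le N-1$. I would choose a characteristic vector $e_1 \in L_1$, so that $\mathrm{ad}(e_1)$ has maximal rank on $L_2 \oplus \cdots \oplus L_{N-1}$, pick $e_2 \in L_1$ complementary to $e_1$, and set $e_{i+1} := [e_i, e_1]$ for $2 \le i \le N-1$. Each $e_{i+1}$ then spans $L_i$, and the relations $[e_i, e_1] = e_{i+1}$ are built in. Since the gradation forces $[e_i, e_j] \in L_{i+j-1}$ for $i, j \ge 2$, one writes $[e_i, e_j] = \alpha_{i,j}\, e_{i+j-1}$, with antisymmetry $\alpha_{i,j} = -\alpha_{j,i}$ and $\alpha_{i,j} = 0$ whenever $i + j - 1 > N$. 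A short computation from the Jacobi identity on $(e_1, e_i, e_j)$ then yields the linear recurrence
\[
\alpha_{i,j} \;=\; \alpha_{i,\,j+1} \;+\; \alpha_{i+1,\,j}, \qquad i + j \le N,
\]
which shows that the entire array $(\alpha_{i,j})$ is determined by its values on the anti-diagonal $i + j = N + 1$. The remaining Jacobi identities, on triples of $e_k$'s with all indices $\ge 2$, produce quadratic relations among those anti-diagonal values.

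The final step is to exhaust the graded gauge freedom: rescalings of $e_1$ and $e_2$, together with shifts $e_1 \mapsto e_1 + \sum_{k \ge 2} c_k e_k$, $e_2 \mapsto \mu e_2 + \sum_{k \ge 2} d_k e_k$, with the compensating changes propagating through $e_{i+1} = [e_i, e_1]$. I expect the orbits to collapse to exactly two: the trivial orbit (all $\alpha_{i,j} = 0$), giving $n_{N,1}$; and, when $N$ is even, $N = 2n$, a single nontrivial orbit in which only $\alpha_{i,\,2n+1-i} = (-1)^i$ survive, giving $Q_{2n}$. A parity argument on the recurrence combined with the antisymmetry condition at the central pair forces $N$ to be even in the nontrivial case.

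The main obstacle will be this last step: handling the quadratic Jacobi constraints together with the graded gauge action, and proving that the multi-parameter family of anti-diagonal values collapses to a single isomorphism class on the $Q_{2n}$ side while no nontrivial class survives in odd dimension. An invariant such as the dimension of the center or of $L^{[2]}$ is then used to certify that $n_{2n,1}$ and $Q_{2n}$ are genuinely non-isomorphic.
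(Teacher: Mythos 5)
The paper does not actually prove this statement: Theorem \ref{thm2.8} is quoted from Vergne's paper \cite{Ver} without proof, so there is no internal argument to compare yours against. Your framework --- adapted homogeneous basis with $\dim L_1=2$, $\dim L_i=1$, the ansatz $[e_i,e_j]=\alpha_{i,j}e_{i+j-1}$, the Pascal-type recurrence $\alpha_{i,j}=\alpha_{i,j+1}+\alpha_{i+1,j}$ coming from the Jacobi identity on $(e_1,e_i,e_j)$, and the reduction to the antidiagonal $i+j=N+1$ --- is exactly the classical route, and everything up to that point is correct.

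The gap is the step you yourself flag as the main obstacle: ``I expect the orbits to collapse to exactly two'' is the entire content of the theorem, and the two mechanisms you announce for that collapse do not work as stated. First, the gauge group you list ($e_1\mapsto e_1+\sum_{k\ge2}c_ke_k$, $e_2\mapsto\mu e_2+\sum_{k\ge2}d_ke_k$) omits $e_2\mapsto e_2+\lambda e_1$, i.e.\ re-choosing the complement of $e_1$ inside $L_1$, and that is precisely the transformation one needs. Concretely, for $N=5$ the configuration $\alpha_{2,3}=\alpha_{2,4}=\beta\neq0$ (all other $\alpha$'s forced by the recurrence, all Jacobi identities satisfied) defines a naturally graded filiform algebra isomorphic to $n_{5,1}$ via $f_1=e_1$, $f_2=e_1+\beta^{-1}e_2$, $f_{i+1}=[f_i,f_1]$; but one checks that $[e_2',e_3']=\tfrac{\mu\beta}{1-c\beta}\,e_4'\neq 0$ under every transformation in your list, so it is not reducible to the zero configuration within your gauge group. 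The same example shows that the $n_{N,1}$ class is \emph{not} ``the trivial orbit (all $\alpha_{i,j}=0$)'': it contains configurations with nonvanishing $\alpha$'s, so the dichotomy cannot be read off from whether the $\alpha$'s vanish. Second, the parity argument: antisymmetry on the antidiagonal only gives $\beta_i=-\beta_{N+1-i}$, which kills the single central coefficient when $N$ is odd and nothing more. To rule out the nontrivial case in odd dimension you must first establish $\beta_i=(-1)^i a$ for one constant $a$ --- and that comes from the quadratic Jacobi identities combined with the gauge normalization, not from antisymmetry --- after which $\beta_i=-\beta_{N+1-i}$ yields $a\bigl((-1)^i-(-1)^{N-i}\bigr)=0$ and hence $a=0$ for $N$ odd. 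So the plan is the standard and correct one, but the decisive normalization argument remains to be supplied, and two of its announced ingredients need repair before it can be.
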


All solvable Lie algebras whose nilradical is the naturally graded filiform Lie algebra $n_{n,1}$ are classified in \cite{SnWi}. Further  solvable Lie algebras whose nilradical is the naturally graded filiform Lie algebra $Q_{2n}$ are classified in \cite{AnCaGa3}.
It is proved that the dimension of a solvable Lie algebra whose nilradical is isomorphic
to an $n$-dimensional naturally graded filiform Lie algebra is not greater than $n+2$.
Here we give the list of such solvable Lie algebras. We denote by $\mathfrak{s}^{i}_{n,1}$
solvable Lie algebras with nilradical $n_{n,1}$ and codimension one, and by $\mathfrak{s}_{n,2}$ with codimension two.
Similarly, for the solvable Lie algebras with nilradical $Q_{2n}$ we use notations
$\tau_{2n,1}^i$ and $\tau_{2n,2}:$
$$\mathfrak{s}^{1}_{n,1}(\beta)\ : \left\{\begin{array}{llll}
[e_i, e_1] = e_{i+1}, &  2 \leq i \leq n-1, \\[1mm]
[e_1,x]=e_1, \\[1mm]
[e_i, x]  =(i-2+\beta) e_i,  & 2 \leq i \leq n.
\end{array}\right.$$
$$\mathfrak{s}^{2}_{n,1}\ : \left\{\begin{array}{llll}
[e_i, e_1] = e_{i+1}, &  2 \leq i \leq n-1, \\[1mm]
[e_i, x]  = e_i,  & 2 \leq i \leq n.
\end{array}\right.$$
$$\mathfrak{s}^{3}_{n,1}\ : \left\{\begin{array}{lll}
[e_i, e_1] = e_{i+1}, &  2 \leq i \leq n-1, \\[1mm]
[e_1,x]=e_1 +e_2, \\[1mm]
[e_i, x]  = (i-1)e_i,  & 2 \leq i \leq n.
\end{array}\right.$$
\[\mathfrak{s}^{4}_{n,1}(\alpha_3, \alpha_4, \dots, \alpha_{n-1}) :
 \left\{\begin{array}{ll}
[e_i,e_1]=e_{i+1}, & 2 \leq i \leq n-1,\\[1mm]
[e_i,x]=e_i+\sum\limits_{l=i+2}^{n} \alpha_{l+1-i} e_l,  & 2\leq i\leq n.
\end{array}\right.\]
$$\mathfrak{s}_{n,2}:\left\{\begin{array}{lll } [e_i, e_1] = e_{i+1}, &   2 \leq i \leq n-1, \\[1mm]
[e_1, x_1] = e_1, & \\[1mm]
[e_i, x_1]  = (i-2)e_i,  & 3 \leq i \leq n, \\[1mm]
 [e_i, x_2] = e_i, &  2 \leq i \leq n.
\end{array}\right.$$

\[\tau_{2n,1}^1(\alpha):  \left\{\begin{array}{llll}
[e_i,e_1] =e_{i+1},&  2\leq i \leq 2n-2,\\[1mm]
[e_i,e_{2n+1-i}] =(-1)^i e_{2n},& 2\leq i \leq n,\\[1mm]
 [e_1,x]=e_1,&\\[1mm]
 [e_i,x]=(i-2+\alpha) e_i, &2\leq i\leq 2n-1,\\[1mm]
  [e_{2n},x] =(2n-3+2\alpha)e_{2n}.&
\end{array}\right.\]
\[\tau_{2n,1}^2:  \left\{\begin{array}{llll}
[e_i,e_1] =e_{i+1},& 2\leq i \leq 2n-2,\\[1mm]
[e_i,e_{2n+1-i}]=(-1)^i  e_{2n},& 2\leq i \leq n,\\[1mm]
 [e_1,x] =e_1 + e_{2n}, &  \\[1mm]
  [e_i,x]=(i-n) e_i, & 2\leq i\leq 2n-1,\\[1mm]
[e_{2n},x] = e_{2n}.&
\end{array}\right.\]
\[\tau_{2n,1}^3(\alpha_4, \alpha_6, \dots \alpha_{2n-2}):  \left\{\begin{array}{llll}
[e_i,e_1] =e_{i+1}, & 2\leq i \leq 2n-2,\\[1mm]
[e_i,e_{2n+1-i}]  =(-1)^i e_{2n}, & 2\leq i \leq n,\\[1mm]
 [e_{i+2},x] =e_{i+2} + \sum\limits_{k=2}^{\lfloor \frac {2n-3-i} {2} \rfloor}
  \alpha_{2k}\, e_{2k+1+i}, & 0\leq i \leq 2n-3,\\[1mm]
  [e_{2n},x] = 2 e_{2n}.
\end{array}\right. \]
\[\tau_{2n,2}\left\{\begin{array}{llll}
[e_i,e_1]=e_{i+1},& 2\leq i \leq 2n-2,\\[1mm]
[e_i,e_{2n+1-i}]=(-1)^i e_{2n},& 2\leq i \leq n,\\[1mm]
 [e_i,x]=i e_i, & 1\leq i \leq 2n-1,\\[1mm]
[e_{2n},x]=(2n+1) e_{2n}, &\\[1mm]
 [e_i,y]=e_i, & 1\leq i \leq 2n-1,\\[1mm]
 [e_{2n},y]=-[y,e_{2n}]=2 e_{2n}.
\end{array}\right.\]

\subsection{Central extension of nilpotent Lie algebras}
Let $(N, [-,-])$ be a nilpotent Lie algebra over  $\mathbb C$
and $\mathbb V$ be a vector space. The $\mathbb C$-linear space ${\rm Z^{2}}\left(N, \mathbb V \right) $
is defined as the set of all anti-symmetric bilinear maps $\psi \colon {N} \times {N} \longrightarrow {\mathbb V}$ such that
\[ \psi(x,[y,z])+\psi(z,[x,y])+\psi(y,[z,x])=0.\]

These elements will be called 2-{\it cocycles}. For a
linear map $f: N \rightarrow \mathbb V$, if we define $\delta f\colon {N} \times
{N} \longrightarrow {\mathbb V}$ by $\delta f (x,y ) =f([x, y])$, then $\delta f\in {\rm Z^{2}}\left({N},{\mathbb V} \right)$.
We define the set of 2-coboundaries as follows ${\rm B^{2}}\left({N},{\mathbb V}\right) =\left\{\delta f \ | \ f\in {\rm Hom}\left(N,{\mathbb V}\right) \right\} $.
Define the {\it second cohomology space} ${\rm H^{2}}\left(N, {\mathbb V}\right) $ as the quotient space ${\rm Z^{2}}
\left(N,{\mathbb V}\right) \big/{\rm B^{2}}\left( {N},{\mathbb V}\right) $.

For $\theta \in {\rm Z^{2}}\left(
{N},{\mathbb V}\right) $, define on the linear space $\widetilde{N} = {N}\oplus {\mathbb V}$ the
bilinear product $\left[ -,-\right]_{\psi}$ by $$\left[x+u , y+v \right]_{\psi}=
 [x, y] +\psi(x,y)$$ for all $x,y\in {N}, u, v \in {\mathbb V}$.
The algebra $N_{\psi} = (\widetilde{N}, \left[- , - \right]_{\psi})$  is called an $s$-{\it dimensional central extension} of ${N}$ by ${\mathbb V}$. One can easily check that ${N_{\psi}}$ is a Lie
algebra if and only if $\psi \in {\rm Z^2}(N, {\mathbb V})$. Let $\operatorname{Aut}(N)$ be an automorphism group of ${N} $ and let $\varphi \in \operatorname{Aut}({N})$. For $\psi \in
{\rm Z^{2}}\left(N, {\mathbb V}\right) $ define  the action of the group $\operatorname{Aut}(N) $ on ${\rm Z^{2}}\left( N,{\mathbb V}\right) $ by $$\varphi \psi (x,y)
=\psi \left( \varphi \left( x\right) , \varphi \left( y\right) \right) .$$

It is easy to verify that
 ${\rm B^{2}}\left(N,{\mathbb V}\right) $ is invariant under the action of $\operatorname{Aut}(N).$
 So, we have an induced action of  $\operatorname{Aut}(N)$  on ${\rm H^{2}}\left(N,{\mathbb V}\right)$.
Call the
set $\operatorname{Ann}(\psi)=\left\{ x\in N \ | \ \psi  \left( x, N \right) =0\right\} $
the {\it annihilator} of $\psi $ and $Z(N) =\left\{ x\in N \ | \  [x, N] =0\right\}$ the {\it center} of an  algebra $N$. Observe
 that
$Z(N_{\psi}) =(\operatorname{Ann}(\psi) \cap Z(N))
 \oplus {\mathbb V}$.

It is known that any Lie algebra with a non-zero center is a central extension of a lower dimensional algebra.
Now we recall an algorithm of classification all nilpotent Lie algebras of dimension $n$ with the central extension of nilpotent algebras of dimension less than $n.$
In order to solve the isomorphism problem we need to study the
action of $\operatorname{Aut}(N)$ on ${\rm H^{2}}\left( N,{\mathbb V}
\right) $. To do that, let us fix a basis $e_{1},\ldots ,e_{s}$ of ${\mathbb V}$ and $
\psi \in {\rm Z^{2}}\left( N,{\mathbb V}\right) $. Then $\psi $ can be uniquely
written as $\psi \left( x,y\right) =
\displaystyle \sum_{i=1}^{s} \psi_{i}\left( x,y\right) e_{i}$, where $\psi_{i}\in
{\rm Z^{2}}\left( N,\mathbb C\right) $. Moreover, $\operatorname{Ann}(\psi)=\operatorname{Ann}(\psi_{1})\cap\operatorname{Ann}(\psi_{2})\cap\ldots \cap\operatorname{Ann}(\psi_{s})$. Furthermore, $\psi \in
{\rm B^{2}}\left( N,{\mathbb V}\right) $ if and only if all $\psi_{i}\in {\rm B^{2}}\left( N,
\mathbb C\right) $.
It is not difficult to prove that given a Lie algebra $N_{\psi}$ with  $\psi \left( x, y\right) = \displaystyle \sum_{i=1}^{s} \psi_{i}\left(x,y\right) e_{i}\in {\rm Z^{2}}\left( N,{\mathbb V}\right) $ and
$\operatorname{Ann}(\psi)\cap Z( N) =0$, then $N_{\psi}$ has an
annihilator component if and only if $\left[\psi_{1}\right] ,\left[
\psi_{2}\right] ,\ldots ,\left[ \psi_{s}\right] $ are linearly
dependent in ${\rm H^{2}}\left( N,\mathbb C\right) $.

Let ${\mathbb V}$ be a finite-dimensional vector space over $\mathbb C$. The {\it Grassmannian} $G_{k}\left( {\mathbb V}\right) $ is the set of all $k$-dimensional
linear subspaces of ${\mathbb V}$. Let $G_{s}\left( {\rm H^{2}}\left( N,\mathbb C\right) \right) $ be the Grassmannian of subspaces of dimension $s$ in
${\rm H^{2}}\left( N,\mathbb C\right) $. There is a natural action of $\operatorname{Aut}(N)$ on $G_{s}\left( {\rm H^{2}}\left( N,\mathbb C\right) \right) $.
Let $\varphi \in \operatorname{Aut}(N)$. For $W=\left\langle
\left[\psi_{1}\right] ,\left[\psi_{2}\right] ,\dots,\left[ \psi_{s}
\right] \right\rangle \in G_{s}\left( {\rm H^{2}}\left( N,\mathbb C
\right) \right) $ define $\varphi W=\left\langle \left[ \varphi \psi_{1}\right]
,\left[ \varphi \psi_{2}\right] ,\dots,\left[ \varphi \psi_{s}\right]
\right\rangle $.

We denote the orbit of $W\in G_{s}\left(
{\rm H^{2}}\left( N,\mathbb C\right) \right) $ under the action of $\operatorname{Aut}(N)$ by $\operatorname{Orb}(W)$. Given
\[
W_{1}=\left\langle \left[ \psi_{1}\right] ,\left[ \psi_{2}\right] ,\dots,
\left[ \psi_{s}\right] \right\rangle, W_{2}=\left\langle \left[ \vartheta
_{1}\right] ,\left[ \vartheta _{2}\right] ,\dots,\left[ \vartheta _{s}\right]
\right\rangle \in G_{s}\left( {\rm H^{2}}\left( N,\mathbb C\right)
\right),
\]
we easily have that if $W_{1}=W_{2}$, then $ \bigcap\limits_{i=1}^{s}\operatorname{Ann}(\psi_{i})\cap Z\left( N\right) = \bigcap\limits_{i=1}^{s}
\operatorname{Ann}(\vartheta_{i})\cap Z( N) $ and therefore we can introduce
the set
\[
{\bf T}_{s}(N) =\left\{ W=\left\langle \left[ \psi_{1}\right] ,
\left[ \psi_{2}\right], \dots,\left[\psi_{s}\right] \right\rangle \in
G_{s}\left( {\rm H^{2}}\left( N,\mathbb C\right) \right) : \bigcap\limits_{i=1}^{s}\operatorname{Ann}(\psi_{i})\cap Z(N) =0\right\},
\]
which is stable under the action of $\operatorname{Aut}(N)$.

\

Now, let ${\mathbb V}$ be an $s$-dimensional linear space and let us denote by
${\bf E}\left( N,{\mathbb V}\right) $ the set of all {\it non-split $s$-dimensional central extensions} of $N$ by
${\mathbb V}$. By above, we can write
\[
{\bf E}\left( N,{\mathbb V}\right) =\left\{ N_{\psi}:\theta \left( x,y\right) = \sum_{i=1}^{s}\psi_{i}\left(x, y\right) e_{i} \ \ \text{and} \ \ \left\langle \left[\psi_{1}\right] ,\left[\psi_{2}\right] ,\dots,
\left[\psi_{s}\right] \right\rangle \in {\bf T}_{s}(N) \right\} .
\]
We also have the following result.

\begin{lem}
 Let $N_{\psi}, N_{\vartheta}\in {\bf E}\left( N, {\mathbb V}\right) $. Suppose that $\psi \left( x,y\right) =  \displaystyle \sum_{i=1}^{s}
\psi_{i}\left( x,y\right) e_{i}$ and $\vartheta \left( x,y\right) =
\displaystyle \sum_{i=1}^{s} \vartheta _{i}\left( x,y\right) e_{i}$.
Then the Lie algebras $N_{\psi}$ and $N_{\vartheta } $ are isomorphic
if and only if
$$\operatorname{Orb}\left\langle \left[ \theta _{1}\right] ,
\left[ \theta _{2}\right] ,\dots,\left[ \theta _{s}\right] \right\rangle =
\operatorname{Orb}\left\langle \left[ \vartheta _{1}\right] ,\left[ \vartheta
_{2}\right] ,\dots,\left[ \vartheta _{s}\right] \right\rangle .$$
\end{lem}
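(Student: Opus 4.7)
The plan is to prove the two directions separately. For the ``if'' direction, suppose the orbits coincide, so that there exists $\varphi \in \operatorname{Aut}(N)$ and an invertible $s \times s$ matrix $A = (a_{ij})$ with $\varphi[\psi_j] = \sum_{i} a_{ij}[\vartheta_i]$ in $\mathrm{H}^2(N,\mathbb{C})$ for every $j$. At the $\mathrm{Z}^2$ level this yields a linear map $\tau : N \to \mathbb{V}$ such that $(\varphi\psi)(x,y) - \sum_{i,j} a_{ij}\vartheta_i(x,y)\,e_j = \delta\tau(x,y)$. I would then define $\Phi : N_\psi \to N_\vartheta$ on the direct sum $N \oplus \mathbb{V}$ by $\Phi(x+v) = \varphi(x) + \tau(x) + \widetilde{A}(v)$, where $\widetilde{A}$ is the linear map on $\mathbb{V}$ corresponding to $A$, and verify directly, using the cocycle identity above and the centrality of $\mathbb{V}$, that $\Phi$ intertwines $[-,-]_\psi$ and $[-,-]_\vartheta$.

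For the ``only if'' direction, the crucial observation is that the non-split hypothesis $\operatorname{Ann}(\psi) \cap Z(N) = 0$ (which holds since $\langle[\psi_1],\dots,[\psi_s]\rangle \in \mathbf{T}_s(N)$) forces $Z(N_\psi) = \mathbb{V}$, and similarly $Z(N_\vartheta) = \mathbb{V}$. Indeed, an element $x+v$ lies in $Z(N_\psi)$ only if $x \in Z(N) \cap \operatorname{Ann}(\psi) = 0$. Consequently, any isomorphism $\Phi : N_\psi \to N_\vartheta$ must restrict to an invertible linear map $\widetilde{A} : \mathbb{V} \to \mathbb{V}$. Writing $\Phi(x) = \varphi(x) + \tau(x)$ for $x \in N$, and projecting the identity $\Phi([x,y]_\psi) = [\Phi(x), \Phi(y)]_\vartheta$ onto the $N$-component, I get that $\varphi$ is a Lie algebra homomorphism; surjectivity of $\Phi$ combined with $\Phi(\mathbb{V}) = \mathbb{V}$ gives the surjectivity of $\varphi$, so $\varphi \in \operatorname{Aut}(N)$. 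Projecting the same identity onto the $\mathbb{V}$-component produces precisely the cocycle relation that places $\varphi\psi$ and $\widetilde{A}\vartheta$ in the same cohomology class, which unwinds to the desired orbit equality.

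The main obstacle I anticipate is purely bookkeeping: one must track the matrix $\widetilde{A}$ on $\mathbb{V}$ in a coordinate-free way and verify that the direction of the action agrees with the definition $\varphi\psi(x,y) := \psi(\varphi(x),\varphi(y))$ fixed in the preliminaries. One must decide once whether the isomorphism $N_\psi \to N_\vartheta$ corresponds to $\varphi[\psi_j] = \sum a_{ij}[\vartheta_i]$ or to its inverse and then use the chosen convention consistently in both directions; this is not a serious difficulty but it is the only point where a misstep would invalidate the argument.
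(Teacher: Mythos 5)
The paper does not actually prove this lemma: it is recalled verbatim as part of the standard Skjelbred--Sund machinery (with proofs in the cited literature), so there is no ``paper proof'' to compare against. Your sketch is the standard argument and is correct. In the ``only if'' direction the key point is exactly the one you isolate: since $\langle[\psi_1],\dots,[\psi_s]\rangle\in{\bf T}_s(N)$ forces $\operatorname{Ann}(\psi)\cap Z(N)=0$, the identity $Z(N_\psi)=(\operatorname{Ann}(\psi)\cap Z(N))\oplus\mathbb V$ gives $Z(N_\psi)=\mathbb V$, so any isomorphism preserves $\mathbb V$ and descends to an automorphism of $N$, and the $\mathbb V$-projection of the homomorphism identity yields the cohomological relation. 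The one genuine wrinkle is the convention you flag at the end: with your displayed relation $(\varphi\psi)(x,y)-\sum_{i,j}a_{ij}\vartheta_i(x,y)e_j=\delta\tau(x,y)$, the map $\Phi(x+v)=\varphi(x)+\tau(x)+\widetilde A(v)$ intertwines the brackets in the direction $N_\vartheta\to N_\psi$ (equivalently, one should apply the construction to $\varphi^{-1}$, or set $\widetilde A(e_i)=\sum_j a_{ij}e_j$ and read the map the other way); since orbit equality and isomorphism are both symmetric, this costs nothing, but it should be stated consistently in a written-up version. You also implicitly use that the classes $[\psi_1],\dots,[\psi_s]$ and $[\vartheta_1],\dots,[\vartheta_s]$ are linearly independent so that the matrix $A$ is invertible; this is guaranteed because elements of ${\bf T}_s(N)$ are $s$-dimensional subspaces of $\mathrm H^2(N,\mathbb C)$, and is worth one sentence.
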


This shows that there exists a one-to-one correspondence between the set of $\operatorname{Aut}(N)$-orbits on ${\bf T}_{s}\left( N\right) $ and the set of
isomorphism classes of ${\bf E}\left( N,{\mathbb V}\right) $.

\subsection{Extension of solvable Lie algebras}

Now let $L$ be a solvable Lie algebra and $\mathbb V$ be a vector space. Let $\theta : L\rightarrow \operatorname{End} {\mathbb V}$ a representation, $\psi:L\times L\rightarrow {\mathbb V}$ an anti-symmetric bilinear mapping satisfying the condition
\begin{equation}\label{eq2.1}\psi(x,[y,z])+\psi(z,[x,y])+\psi(y,[z,x])+\theta (x)\psi(y,z)+\theta (z)\psi(x,y)+\theta (y)\psi(z,x)=0,\end{equation}
where $x,y,z\in L.$

The bilinear mapping satisfying the previous condition is said to be a 2-cocycle on $L$ with respect to $\theta $.
The set of all such 2-cocycles is denoted by $Z^2(L,\theta, {\mathbb V}).$
The 2-coboundaries on $L$ with respect to $\theta $ are defined as
$$df(x,y)=f([x,y])+\theta\big(\varphi(y)\big)(f(x))-\theta\big(\varphi(x)\big)f(y))$$
for some linear map $f: L \rightarrow {\mathbb V}$ and $\varphi \in \operatorname{Aut}(L)$.
The set of all such 2-coboundaries is denoted by $B^2(L,\theta, {\mathbb V})$ and it is a subset of $Z^2(L,\theta, {\mathbb V}).$
A factor space $Z^2(L,\theta, {\mathbb V})$ by $B^2(L,\theta, {\mathbb V})$ denoted as $H^2(L,\theta, {\mathbb V}),$ i.e., $H^2(L,\theta, {\mathbb V}) = Z^2(L,\theta, {\mathbb V}) / B^2(L,\theta, {\mathbb V}).$

Now we give the extension for the Lie algebra $L$. On the vector space $\tilde{L}=L\oplus {\mathbb V}$
define Lie algebra structure $L(\psi,\theta)$ for the given $\psi\in Z^2(L,\theta, {\mathbb V})$ as follows:
$$[x + u, y + v]=[x,y] +  \psi(x,y) + \theta (x) v-\theta (y) u$$
for all $u,v \in {\mathbb V}, x,y\in L.$ Note that the algebra $L(\psi,\theta)$ is an extension of $L$ by ${\mathbb V}.$

Let us denote by $N$ the nilradical of $L$ and by $Z(N)$ the center of $N$.
 We study Lie algebras   $\tilde{L}=L(\psi,\theta)$ for which the nilradical $\tilde{N}$ is central extension of $N$ by ${\mathbb V}$.
 Denote $$\psi^0=\psi|_{N\times N}, \quad \theta ^0=\theta |_N, \quad \operatorname{Ann}(\psi^0)=\{x\in N \ | \ \psi^0(x,N)=0\}.$$

 \begin{prop}\cite{Sund2} Let
  $\tilde {L}=L(\psi,\theta)$ be an extension of $L$ by ${\mathbb V}$. Then
\begin{itemize}
\item[a)]
 The nilradical $\tilde{N}$ of $\tilde{L}$ is a central extension of $N$ by ${\mathbb V}$  if and only if $\operatorname{Ker} (\theta) \supset N$.

\item[b)] Let $\operatorname{Ker} (\theta) \supset N$. Then $Z(\tilde{N})={\mathbb V}$ if and only if $\operatorname{Ann}(\psi^0)\cap Z(N)=0.$
\end{itemize}
\end{prop}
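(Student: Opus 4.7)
The plan is to exploit the explicit bracket formula $[x+u,y+v]=[x,y]+\psi(x,y)+\theta(x)v-\theta(y)u$ on $\tilde{L}=L\oplus{\mathbb V}$, using repeatedly the fact that ${\mathbb V}$ is an abelian ideal of $\tilde{L}$ (its bracket with anything lands in ${\mathbb V}$), so that the projection $\pi\colon\tilde{L}\to L$ is a Lie algebra homomorphism with kernel ${\mathbb V}$.

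For part (a), the forward direction is read off the bracket: if $\tilde{N}$ is a central extension of $N$ by ${\mathbb V}$, then $\tilde{N}=\pi^{-1}(N)$ contains both $n$ and $v$ for every $n\in N$ and $v\in{\mathbb V}$, and centrality of ${\mathbb V}$ in $\tilde{N}$ forces $0=[n,v]=\theta(n)v$, giving $N\subset\operatorname{Ker}\theta$. For the reverse direction, assuming $\operatorname{Ker}\theta\supset N$, I would first check that $N\oplus{\mathbb V}$ is an ideal of $\tilde{L}$ (using that $N$ is an ideal of $L$ and that $\psi,\theta$ deliver values in ${\mathbb V}$), and that it is nilpotent because ${\mathbb V}$ lies in its center (indeed $[n,v]=\theta(n)v=0$ for $n\in N$, $v\in{\mathbb V}$) while $N$ itself is nilpotent. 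This yields $N\oplus{\mathbb V}\subset\tilde{N}$. The opposite inclusion follows by passing to the quotient: $\tilde{N}/{\mathbb V}$ is a nilpotent ideal of $\tilde{L}/{\mathbb V}\cong L$, hence contained in the nilradical $N$, so $\tilde{N}\subset\pi^{-1}(N)=N\oplus{\mathbb V}$. Combining both inclusions gives $\tilde{N}=N\oplus{\mathbb V}$ with ${\mathbb V}$ central in $\tilde{N}$, which is precisely the assertion that $\tilde{N}$ is a central extension of $N$ by ${\mathbb V}$.

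For part (b), using $\tilde{N}=N\oplus{\mathbb V}$ from part (a) together with $\theta|_N=0$, the bracket on $\tilde{N}$ simplifies to $[x+u,y+v]=[x,y]+\psi^0(x,y)$ for $x,y\in N$ and $u,v\in{\mathbb V}$. Since $[x,y]\in N$ and $\psi^0(x,y)\in{\mathbb V}$ lie in complementary subspaces, the element $x+u$ belongs to $Z(\tilde{N})$ if and only if $[x,y]=0$ and $\psi^0(x,y)=0$ for every $y\in N$, that is, $x\in Z(N)\cap\operatorname{Ann}(\psi^0)$. Therefore $Z(\tilde{N})=(\operatorname{Ann}(\psi^0)\cap Z(N))\oplus{\mathbb V}$, which collapses to ${\mathbb V}$ exactly when $\operatorname{Ann}(\psi^0)\cap Z(N)=0$. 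The only genuinely delicate step in the whole argument is the opposite inclusion $\tilde{N}\subset N\oplus{\mathbb V}$ in part (a); it is handled cleanly by the quotient construction once one observes that ${\mathbb V}$ is itself an ideal of $\tilde{L}$ and that nilpotency is preserved by quotients.
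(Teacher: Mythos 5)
Your argument is correct: the key identities $\tilde{N}=N\oplus\mathbb{V}$ (via the two inclusions, using that $\mathbb{V}$ is an abelian ideal of $\tilde{L}$ hence lies in $\tilde{N}$, and that $\tilde{N}/\mathbb{V}$ is a nilpotent ideal of $\tilde{L}/\mathbb{V}\cong L$) and $Z(\tilde{N})=(\operatorname{Ann}(\psi^0)\cap Z(N))\oplus\mathbb{V}$ give exactly parts a) and b). The paper itself offers no proof of this Proposition, citing Sund's paper instead, and your verification follows the same standard route as that source, so there is nothing to add beyond noting that all steps check out.
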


From this Proposition we easily get that in the case of $L$ is a nilpotent, this extension is central extension  of nilpotent algebras if and only if $\operatorname{Ker}(\theta) =L.$
%

In \cite{Sund2} it is proved that two extensions $L(\psi_1,\theta_1)$ and $L(\psi_2,\theta_2)$ are isomorphic if and only if
the following equations hold
\begin{equation}\label{eq1}\begin{array}{cc}\theta_2\big(\alpha(x)\big) (\beta(a)) = \beta \big(\theta_1(x) (a)\big),\\[1mm]
\psi_2\big(\alpha(x),\alpha(y)\big)= \beta\big( \psi_1(x,y)\big) + f\big([x,y]\big)+  \theta_2\big(\alpha(y)\big)(f(x))-\theta_2\big(\alpha(x)\big)(f( y))
\end{array}\end{equation}
where $\alpha \in \operatorname{Aut}(L), \beta \in Aut (\mathbb V), f \in Hom(L, \mathbb V).$

Using the equation \eqref{eq1} we obtain an action $\operatorname{Aut} L\times \operatorname{Aut} \mathbb V$ to the set $\bigcup\limits_{\theta}Z^2(L,\theta, \mathbb V),$ which
 2-cocycle $\psi$ with respect to $\theta$ acts 2-cocycle $\psi'$ with respect to $\theta'$ as follows:
 $$\theta'(x) (a) = \beta\Big( \theta\big(\alpha(x)\big) \big(\beta^{-1}(a)\big)\Big),$$
$$\psi'(x,y)= \beta\big( \psi(\alpha(x),\alpha(y))\big).$$

In this action we denote by
$\theta'= \beta \circ (\theta \circ \alpha) \circ \beta^{-1}$ and
$\psi' = \beta \circ \psi \circ \alpha. $  We say that $\beta$ is an intertwining operator for representations $\theta'$ and $\theta \circ \alpha.$
It implies from \eqref{eq1} that two extensions $L(\psi_1,\theta_1)$ and $L(\psi_2,\theta_2)$ are isomorphic if and only if there exist $\alpha \in \operatorname{Aut}(L), \beta \in \operatorname{Aut} (\mathbb V)$ such that
$$\psi_2 - \beta \circ \psi_1 \circ \alpha \in B^2(L,\theta_2, \mathbb V)$$
and $\beta$ is an intertwining operator for representations $\theta_2$ and $\theta_1 \circ \alpha.$

Note that the restricted action of $\operatorname{Aut} L\times \operatorname{Aut} \mathbb V$  in $H^2(N, \mathbb V)$ induced an action of $\operatorname{Aut} L$ in the set of all $k$-dimensional subspaces $G_kH^2(N,\mathbb{F})$ of the second cohomology group of $N$ if and only if $L(\psi, \theta)$ contains non abelian factor. We say that an
$\operatorname{Aut} L$ -- orbit  $\Omega$ in $G_kH^2(N,\mathbb{F})$ has no kernel in the center $Z(N)$ if $\operatorname{Ann}(\psi^0)\cap Z(N)=0$ for all $\psi^0 \in \Lambda,$ where $\Lambda$ runs through $\Omega.$

Denote by  $H^2(L, L/N, \mathbb V) $ the space $\bigcup\limits_{\theta}H^2(L,\theta, \mathbb V)$ where $\theta$ runs through those representations of $L$ in $\mathbb V$ which satisfy
$\operatorname{Ker} \theta \supset N$ and $\widetilde{N}/\mathbb V \cong N.$

\begin{thm}\label{thm1}\cite{Sund2} Let $L$ be a solvable Lie algebra over the field $\mathbb{F}$ and $N$ a nilpotent nilradical of $L$. The isomorphism classes of solvable Lie algebras $\tilde{L}$ pos\-ses\-sing nilradical  $\tilde{N}$ with $k$-dimensional center $\mathbb V$, such that $\tilde{L}/ \mathbb V\cong L, \tilde{N}/ \mathbb V\cong N$, and without nonzero abelian direct factors, are in bijective correspondence with those $\operatorname{Aut} L\times \operatorname{Aut} \mathbb V$ -- orbits $\Omega$ in $H^2(L,L/N,\mathbb V) $(under the action $(\alpha, \beta ,\psi)\rightarrow (\beta \circ \psi \circ \alpha$) which satisfy the following conditions:
\begin{itemize}
\item[\emph{1)}]  If $\psi\in \Omega\cap H^2(L,\theta, \mathbb V)$, then $\mathbb V$ can not be written $\mathbb V=\textit{B}\oplus\textit{D}$ where $\textit{B}\supset\psi(L,L), $ $\theta(L)\textit{B}\subset\textit{B}$, and $0\neq \textit{D}\subset C(\theta),$ where $C(\theta)= \{a \in A: \theta(L)a=0\}.$
\item[\emph{2)}]
 $\operatorname{Ann}(\psi^0)\cap Z(N)=0.$
\end{itemize}

\end{thm}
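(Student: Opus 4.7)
The plan is to set up the correspondence explicitly and then match Lie-algebra isomorphism on one side with the orbit action on the other. Given $\tilde{L}$ satisfying the hypotheses, I choose a vector-space complement $L_0$ to $\mathbb{V}$ in $\tilde{L}$ and identify $L_0$ with $L \cong \tilde{L}/\mathbb{V}$. The bracket on $\tilde{L}$ decomposes, for $x, y \in L_0$, as $[x,y]_{\tilde{L}} = [x,y]_L + \psi(x,y)$, while the adjoint action of $L_0$ on $\mathbb{V}$ defines a representation $\theta : L \to \operatorname{End} \mathbb{V}$. The Jacobi identity in $\tilde{L}$ is then precisely the cocycle identity \eqref{eq2.1}, so $\psi \in Z^2(L, \theta, \mathbb{V})$; a different choice of $L_0$ modifies $\psi$ by a coboundary, so $[\psi]$ is intrinsic to $\tilde{L}$. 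By part (a) of the Proposition just above, $\tilde{N}/\mathbb{V} \cong N$ forces $\operatorname{Ker}\theta \supset N$, placing $[\psi]$ in $H^2(L, L/N, \mathbb{V})$.

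I would then verify the two conditions. Condition 2) is immediate from part (b) of the Proposition. For condition 1), I claim that decompositions $\mathbb{V} = B \oplus D$ with $\psi(L,L) \subset B$, $\theta(L)B \subset B$ and $D \subset C(\theta)$ correspond bijectively to nonzero abelian direct factors of $\tilde{L}$. Indeed, given such $B$ and $D$, the three containments are exactly what is needed for $L_0 \oplus B$ to be a subalgebra and for $D$ to be a central ideal commuting with it, giving $\tilde{L} = (L_0 \oplus B) \oplus D$; conversely, any abelian direct factor lies in $Z(\tilde{L}) \cap \tilde{N} \subset \mathbb{V}$, and choosing $L_0$ inside the companion summand produces the required $B$.

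For bijectivity, I would apply the isomorphism criterion \eqref{eq1}: $L(\psi_1, \theta_1) \cong L(\psi_2, \theta_2)$ iff there exist $\alpha \in \operatorname{Aut}(L)$, $\beta \in \operatorname{Aut}(\mathbb{V})$ and $f \in \Hom(L, \mathbb{V})$ with $\beta$ intertwining $\theta_1$ and $\theta_2 \circ \alpha$ and with $\psi_2 - \beta \circ \psi_1 \circ \alpha \in B^2(L, \theta_2, \mathbb{V})$. This is precisely the defining relation for $\operatorname{Aut}(L) \times \operatorname{Aut}(\mathbb{V})$-orbits on $H^2(L, L/N, \mathbb{V})$, so isomorphism classes of extensions of the prescribed kind biject with such orbits satisfying 1) and 2).

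The main obstacle is the converse direction of condition 1): starting from an abstract abelian direct factor $D$, one must produce a complement $L_0$ whose associated cocycle genuinely takes values in $B$. An arbitrary vector-space complement to $\mathbb{V}$ in $\tilde{L}$ need not lie inside the companion summand $\tilde{L}'$, so $L_0$ must be chosen as a lift of $L$ through the composition $\tilde{L} \twoheadrightarrow \tilde{L}/D \cong \tilde{L}' \twoheadrightarrow \tilde{L}'/B \cong L$. Only with this choice does $B = \tilde{L}' \cap \mathbb{V}$ satisfy $\psi(L,L) \subset B$; the invariance $\theta(L)B \subset B$ then follows from closure of $\tilde{L}'$ under brackets combined with $B \subset \mathbb{V} = Z(\tilde{N})$, and $D \subset C(\theta)$ from $D \subset Z(\tilde{L})$.
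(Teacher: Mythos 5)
This theorem is quoted in the paper from \cite{Sund2} with no proof supplied, so there is no in-paper argument to compare against; judged on its own, your sketch is a sound reconstruction of the standard Sund/Skjelbred--Sund-type argument and its main steps are correct: the splitting $\tilde{L}=L_0\oplus\mathbb V$ turning the Jacobi identity into the cocycle identity \eqref{eq2.1}, coboundary-independence of the class under change of complement, condition 2) via part b) of the quoted Proposition, and the two-way translation between decompositions $\mathbb V=B\oplus D$ and abelian direct factors (your care in choosing $L_0$ inside the companion summand, so that $\psi(L,L)\subset B=\tilde{L}'\cap\mathbb V$ and $\theta(L)B\subset B$, is exactly the right point to worry about, and your argument there works since $\mathbb V=Z(\tilde N)$ is an ideal of $\tilde L$).

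One step deserves to be made explicit, because as written it is where the argument is thinnest: passing from the orbit equivalence to \emph{abstract} isomorphism classes of the algebras $\tilde L$. The criterion \eqref{eq1} (itself only quoted from \cite{Sund2} in this paper) identifies orbits with isomorphism classes of the \emph{extensions} $L(\psi,\theta)$, i.e.\ isomorphisms compatible with the distinguished subspace $\mathbb V$ and the identification $\tilde L/\mathbb V\cong L$. For the theorem's bijection one must also know that an arbitrary isomorphism $\tilde L_1\to\tilde L_2$ is automatically of this type: it carries the nilradical to the nilradical (a characteristic ideal), hence carries $Z(\tilde N_1)=\mathbb V$ onto $Z(\tilde N_2)=\mathbb V$ and so induces the pair $(\alpha,\beta)\in\operatorname{Aut}L\times\operatorname{Aut}\mathbb V$ needed in \eqref{eq1}; this is precisely where the hypothesis that $\mathbb V$ is the \emph{full} center of $\tilde N$ (guaranteed by condition 2)) is used a second time, beyond part b) of the Proposition, and it also makes the assignment $\tilde L\mapsto\Omega$ independent of the chosen identifications. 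Adding that observation closes the only real gap; otherwise the proposal matches what the cited source does.
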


Theorem \ref{thm1} gives an algorithm for constructing all solvable Lie algebras of dimension $n,$ given those algebras of dimension less than $n.$

\section{Central extension of naturally graded filiform Lie algebras}

Let $L$ be a Lie algebra with
a basis $e_{1},e_{2}, \ldots, e_{n}.$ Then by $\Delta_{ij}$ we denote the
bilinear form
$\Delta_{ij}:L\times L \longrightarrow \mathbb C$
with $\Delta_{ij}(e_{l},e_{m}) = \delta_{il}\delta_{jm}.$
The set $\left\{ \Delta_{ij}:1\leq i, j\leq n\right\}$ is a basis for the linear space of
bilinear forms on $L,$ so every $\psi \in
{\rm Z^2}(L,\bf \mathbb V )$ can be uniquely written as $
\psi = \displaystyle \sum_{1\leq i,j\leq n} c_{ij}\Delta _{{i}{j}}$, where $
c_{ij}\in \mathbb C$.

In the following Propositions we give the description of 2-cocycles, 2-coboundaries and second cohomology space of the naturally graded filiform Lie algebras $n_{n,1}$ and $Q_{2n}.$

\begin{prop}\label{prop3.1} For the filiform Lie algebra $n_{n,1}$ the followings are true
\begin{itemize}
\item A basis of $Z^2(n_{n,1}, \mathbb{C}) $ is formed by the following 2-cocycles
$$Z^2(n_{n,1}, \mathbb{C})=\langle\Delta_{i,1}, \ 2\leq i\leq n,  \ \sum\limits_{i=2}^{k}(-1)^i\Delta_{i,2k+1-i},\quad 2\leq k\leq \Big\lfloor\frac{n+1}{2}\Big\rfloor \rangle.$$

%

\item A basis of $B^2(n_{n,1},\mathbb{C})$ is formed by the following 2-coboundaries
 $$B^2(n_{n,1}, \mathbb{C}) =\langle\Delta_{i,1}, \ 2\leq i\leq n-1\rangle.$$

\item A basis of $H^2(n_{n,1},\mathbb{C})$ is formed by the following
$$H^2(n_{n,1},\mathbb{C}) =\Big\langle [\Delta_{n,1}],  \ \Big[\sum\limits_{i=2}^{k}(-1)^i\Delta_{i,2k+1-i}\Big], \ 2\leq k\leq \Big\lfloor\frac{n+1}{2}\Big\rfloor \Big\rangle.$$
\end{itemize}
where $\lfloor n \rfloor$ is an integer part of $n.$
\end{prop}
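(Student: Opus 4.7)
The plan is to compute $Z^2$ by solving the cocycle equation explicitly in coordinates, then compute $B^2$ directly from the definition, and take the quotient. Write any antisymmetric bilinear form as $\psi = \sum_{1 \leq i, j \leq n} c_{ij}\Delta_{ij}$ with $c_{ij} = -c_{ji}$. Since the only nonzero brackets in $n_{n,1}$ are $[e_i, e_1] = e_{i+1}$ for $2 \leq i \leq n-1$, the cocycle identity $\psi(x, [y, z]) + \psi(z, [x, y]) + \psi(y, [z, x]) = 0$ is automatic unless the triple $\{x, y, z\}$ contains $e_1$. Using cyclic symmetry, I would consider only triples $(e_i, e_j, e_1)$ with $i, j \geq 2$, where the identity collapses to $\psi(e_i, [e_j, e_1]) = \psi(e_j, [e_i, e_1])$. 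This gives the relation $c_{i, j+1} = c_{j, i+1}$ when $2 \leq i, j \leq n-1$, together with the auxiliary conditions $c_{n, k} = 0$ for $3 \leq k \leq n$ when one of the indices equals $n$ (so that $[e_n, e_1] = 0$ makes one term vanish identically).

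To solve these equations, I would organize the coefficients $c_{p, q}$ (with $p, q \geq 2$) by antidiagonal $s = p + q$. Combining $c_{p, s-p} = c_{s-p-1, p+1}$ with antisymmetry yields $c_{p+1, s-p-1} = -c_{p, s-p}$, so the values alternate in sign along the antidiagonal. The further antisymmetry $c_{p, s-p} = -c_{s-p, p}$ then forces a parity consistency condition which is met if and only if $s$ is odd. For odd $s = 2k+1$ this produces one free parameter and gives the cocycle $\sum_{i=2}^{k}(-1)^i\Delta_{i, 2k+1-i}$. The auxiliary conditions $c_{p, n} = 0$ for $p \geq 3$ then kill those antidiagonals with $s \geq n+3$, leaving exactly $k = 2, \ldots, \lfloor (n+1)/2 \rfloor$. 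Since the coefficients $c_{i, 1}$ appear in no Jacobi relation at all, each $\Delta_{i, 1}$ with $2 \leq i \leq n$ is automatically a cocycle, completing the claimed basis of $Z^2$.

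For coboundaries, a linear map $f : n_{n,1} \to \mathbb{C}$ gives $\delta f(e_i, e_j) = f([e_i, e_j])$, which is nonzero only on pairs $(e_i, e_1)$ with $2 \leq i \leq n-1$; hence $B^2$ is spanned by $\Delta_{i, 1}$ for $2 \leq i \leq n-1$. Taking the quotient $Z^2/B^2$ eliminates these while $\Delta_{n, 1}$ and all the antidiagonal cocycles remain independent, yielding the stated basis of $H^2$. The main obstacle is the antidiagonal analysis in the second paragraph: one must correctly detect the auxiliary conditions $c_{p, n} = 0$ arising from triples containing $e_n$, and combine them with the alternating-sign behaviour and the parity consistency condition to cut off the range of $k$ at exactly $\lfloor (n+1)/2 \rfloor$ without introducing spurious cocycles on even antidiagonals or on antidiagonals with index sum exceeding $n+2$.
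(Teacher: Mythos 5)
Your proposal is correct, and it follows the same route the paper intends: the paper's proof is just the remark that the statement "follows directly from the definitions of the 2-cocycle and 2-coboundary," and your coordinate computation (the relations $c_{i,j+1}=c_{j,i+1}$, the vanishing conditions $c_{n,k}=0$ for $3\le k\le n$, the alternating-sign antidiagonal analysis with the odd-sum parity condition, and the direct identification of $B^2$) is exactly that verification carried out in detail. No gaps; your treatment of the boundary antidiagonal $s=n+2$ (surviving only for $n$ odd) correctly accounts for the cutoff at $k=\big\lfloor\frac{n+1}{2}\big\rfloor$.
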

\begin{proof} The proof follows directly from the definitions of the 2-cocycle and 2-coboundary.
\end{proof}

\begin{prop}\label{prop3.2} For the filiform Lie algebra $Q_{2n},$ the followings are true
\begin{itemize}
\item A basis of $Z^2(Q_{2n}, \mathbb{C})$ is formed by the following 2-cocycles
$$Z^2(Q_{2n}, \mathbb{C})=\langle\Delta_{i,1}, \ 2\leq i\leq 2n-1,  \ \sum\limits_{i=2}^{k}(-1)^i\Delta_{i,2k+1-i},\quad 2\leq k\leq n \rangle.$$
\item A basis of $B^2(Q_{2n},\mathbb{C})$ is formed by the following 2-coboundaries
$$B^2(Q_{2n}, \mathbb{C}) =\langle\Delta_{i,1}, \ 2\leq i\leq 2n-2, \  \sum\limits_{i=2}^{n}(-1)^i\Delta_{i,2n+1-i} \rangle.$$
\item A basis of $H^2(Q_{2n},\mathbb{C})$ is formed by the following cocycles
$$H^2(Q_{2n},\mathbb{C}) =\langle\Delta_{2n-1,1},  \ \sum\limits_{i=2}^{k}(-1)^i\Delta_{i,2k+1-i}, \ 2\leq k\leq n-1 \rangle.$$

\end{itemize}
\end{prop}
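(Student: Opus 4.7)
My plan is to exploit a weight grading on $Q_{2n}$ defined by $\deg(e_i)=i$ for $1\le i\le 2n-1$ and $\deg(e_{2n})=2n+1$, the last value being forced by compatibility with $[e_i,e_{2n+1-i}]=(-1)^i e_{2n}$. Under this grading the bilinear form $\Delta_{ij}$ has weight $i+j$ (with the convention that $j=2n$ contributes $2n+1$), and both the cocycle identity and the coboundary operator $\delta$ preserve weight. Hence $Z^2$, $B^2$ and $H^2$ split as direct sums of their weight components and each weight can be treated separately.

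Writing a general anti-symmetric form as $\psi=\sum_{i<j}c_{ij}\Delta_{ij}$, I would impose the cocycle identity $\psi(x,[y,z])+\psi(z,[x,y])+\psi(y,[z,x])=0$ on the informative triples. Triples of the form $(e_a,e_b,e_1)$ with $2\le b\le 2n-2$ activate the bracket $[e_b,e_1]=e_{b+1}$ and generate recurrences constraining the $c_{ij}$ with both indices $\ge 2$; the coefficients $c_{i,1}$ with $2\le i\le 2n-1$ remain free (because no bracket has an $e_1$-component, so these coefficients are never constrained), while in each odd weight $2k+1$ a recurrence of the form $c_{i+1}=-c_i$ collapses the allowed family to the one-dimensional span of $\sum_{i=2}^{k}(-1)^i\Delta_{i,2k+1-i}$. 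Triples $(e_a,e_b,e_c)$ with all three indices $\ge 2$ activate the $e_{2n}$-bracket and must be checked to confirm that they impose no new constraint, in particular that the alternating cocycle in weight $2n+1$ is genuinely a cocycle. This produces the basis of $Z^2$ stated in the proposition.

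For $B^2$ I would compute $\delta(e_k^*)$ on the dual basis vector by vector. For $3\le k\le 2n-1$ only $[e_{k-1},e_1]=e_k$ contributes, giving $\delta(e_k^*)=\Delta_{k-1,1}$ and hence the coboundaries $\Delta_{i,1}$ for $2\le i\le 2n-2$. For $k=2n$ only the brackets $[e_i,e_{2n+1-i}]=(-1)^i e_{2n}$ contribute, yielding $\delta(e_{2n}^*)=\sum_{i=2}^{n}(-1)^i\Delta_{i,2n+1-i}$. This is the essential difference from $n_{n,1}$: in $Q_{2n}$ the top alternating cocycle becomes exact, while $\Delta_{2n-1,1}$ survives because the filiform chain $[e_i,e_1]=e_{i+1}$ stops at $i=2n-2$, so $[e_{2n-1},e_1]=0$ and no dual $e_k^*$ yields $\Delta_{2n-1,1}$ as a coboundary. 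The quotient $Z^2/B^2$ then produces precisely the claimed basis of $H^2$.

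The main obstacle will be the weight $2n+1$ component, where the $(-1)^i$-twisted pairing enters both the $e_{2n}$-bracket and the alternating cocycle. One must verify that the signs produced by the recurrence on triples $(e_i,e_{2n-i},e_1)$ match exactly the signs $(-1)^i$ appearing in $[e_i,e_{2n+1-i}]=(-1)^i e_{2n}$, so that $\delta(e_{2n}^*)$ lands precisely on the unique weight-$2n+1$ cocycle produced by the recurrence rather than on a shifted or oppositely-signed combination. This sign bookkeeping is where an error would propagate most easily, and is the step I would check explicitly before asserting the final list.
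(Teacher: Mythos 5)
Your proposal takes the same route as the paper, which offers no details beyond "follows directly from the definitions": a weight-graded direct verification, and your treatment of $B^2$ is exactly right (in particular that $\delta(e_{2n}^*)=\sum_{i=2}^{n}(-1)^i\Delta_{i,2n+1-i}$ makes the top alternating cocycle exact, while $\Delta_{2n-1,1}$ survives because $[e_{2n-1},e_1]=0$). One part of your plan is miscalibrated, however: the triples involving $e_{2n}$ and the triples with all three indices $\geq 2$ do \emph{not} turn out to impose "no new constraint" --- they are precisely what eliminates every coefficient pairing against $e_{2n}$ and the coefficient of $\Delta_{2n,1}$. Concretely, the cocycle identity on $(e_1,e_b,e_{2n})$ gives $\psi(e_{2n},e_{b+1})=0$ for $2\leq b\leq 2n-2$; on $(e_2,e_b,e_c)$ with $b+c=2n+1$ and $b,c\geq 3$ it gives $\psi(e_2,e_{2n})=0$; and on $(e_1,e_b,e_c)$ with $b+c=2n+1$ it gives $\psi(e_1,e_{2n})=0$, which is why $\Delta_{2n,1}$ is absent from the stated basis of $Z^2(Q_{2n},\mathbb{C})$ even though your stated reason ("these coefficients are never constrained") would formally apply to it as well. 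Likewise, for the odd weights $2k+1>2n+1$ the recurrence $d_{i+1}=-d_i$ must be combined with the boundary relation $[e_{2n-1},e_1]=0$, which forces the whole alternating family to vanish there; this is why $k$ stops at $n$ in the cocycle basis. None of this changes the outcome --- executing your computation would surface all of these constraints --- but the step you flagged as a mere confirmation is in fact where the $Q_{2n}$-specific part of the answer is produced.
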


\begin{proof} The proof follows directly from the definitions of the 2-cocycle and 2-coboundary.
\end{proof}

From Propositions \ref{prop3.2}, we can easily get that $\operatorname{Ann}(\psi)\cap Z(Q_{2n}) =\{e_n\}$ for any $\psi \in  Z^2(Q_{2n}, \mathbb{C}).$ Thus, there is no non-split central extension of the algebra $Q_{2n}.$ Therefore, we consider central extensions only for the algebra $n_{n,1}.$ For this purpose, first we give automorphism group of the algebra $n_{n,1}.$

\begin{prop}
Let $\varphi\in \operatorname{Aut}(n_{n,1})$. Then
$$\varphi= \left(\begin{array}{cccccccc}
a_1&0&0&0&0&\ldots&0&0\\
a_2&b_2&0&0&0&\ldots&0&0\\
a_3&b_3&a_1b_2&0&0&\ldots&0&0\\
a_4&b_4&a_1b_3&a_1^2b_2&0&\ldots&0&0\\
a_5&b_5&a_1b_4&a_1^2b_3&a_1^3b_2&\ldots&0&0\\
\ldots&\ldots&\ldots&\ldots&\ldots&\ldots&\ldots&\ldots\\
a_{n-1}&b_{n-1}&a_1b_{n-2}&a_1^2b_{n-3}&a_1^3b_{n-4}&\ldots&a_1^{n-3}b_2&0\\
a_n&b_n&a_1b_{n-1}&a_1^2b_{n-2}&a_1^3b_{n-3}&\ldots&a_1^{n-3}b_3&a_1^{n-2}b_2 \end{array}\right)$$
\end{prop}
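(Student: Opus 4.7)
The plan is to exploit characteristic ideals of $n_{n,1}$ to constrain the shape of $\varphi$, and then to propagate the action of $\varphi$ through the defining relations $e_{i+1}=[e_i,e_1]$.

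First, I would observe that the lower central series supplies characteristic ideals $L^k=\langle e_{k+1},\dots,e_n\rangle$ for $k\ge 2$, so $\varphi(L^k)=L^k$. This already forces $\varphi(e_j)\in L^{j-1}$ for every $j\ge 3$, which accounts for all the zeros above the sub-diagonal in columns $3,\dots,n$ of the matrix.

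The second, most delicate, step is to show that $\varphi(e_2)$ has no $e_1$-component, i.e.\ the zero in row $1$, column $2$. For this I would prove that the codimension-one ideal $I=\langle e_2,e_3,\dots,e_n\rangle$ is characteristic by identifying it intrinsically as
\[
I=\{\,x\in n_{n,1} : (\operatorname{ad} x)^2 = 0\,\}.
\]
For $x\in I$ the subspace $I$ is abelian and $[I,e_1]\subset L^2\subset I$, so $(\operatorname{ad} x)^2=0$; conversely, if $x=\alpha_1 e_1+x'$ with $\alpha_1\ne 0$ (and $n\ge 4$) then $(\operatorname{ad} x)^{n-2}(e_2)=(-\alpha_1)^{n-2}e_n\ne 0$, so $\operatorname{ad} x$ has nilindex $n-1$. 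Thus $\varphi(I)=I$, and in particular $\varphi(e_2)\in I$.

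Third, I would run the induction dictated by the multiplication table. Writing $\varphi(e_1)=\sum_{i=1}^n a_i e_i$ and $\varphi(e_2)=\sum_{i=2}^n b_i e_i$, the remaining images are forced by $\varphi(e_{k+1})=[\varphi(e_k),\varphi(e_1)]$. Because $\varphi(e_k)\in L^{k-1}\subset I$ for $k\ge 2$ and $I$ is abelian, only the $a_1 e_1$ summand of $\varphi(e_1)$ contributes, giving $\varphi(e_{k+1})=a_1[\varphi(e_k),e_1]$. A short induction starting from $\varphi(e_2)=\sum_{k=2}^n b_k e_k$ then produces
\[
\varphi(e_j)=a_1^{j-2}\sum_{k=2}^{n-j+2}b_k\,e_{k+j-2}\qquad (j\ge 2),
\]
which matches columns $2$ through $n$ of the stated matrix verbatim. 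Finally, invertibility of $\varphi$ forces $a_1\ne 0$ (else $\operatorname{ad}\varphi(e_1)$ fails to have maximal nilindex) and $b_2\ne 0$ (else the first diagonal entry $a_1 b_2$ vanishes, ruining invertibility).

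The principal obstacle is the intrinsic characterization of $I$ in the second step; without it, establishing the vanishing of the $e_1$-component of $\varphi(e_2)$ would require unpleasant bracket computations, for instance expanding $0=[\varphi(e_2),\varphi(e_3)]$ and isolating an appropriate coefficient. Recasting the property via the nilpotency index of $\operatorname{ad}$ reduces that obstacle to a one-line verification, after which the remainder of the argument is essentially mechanical.
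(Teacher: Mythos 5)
Your argument is correct, and it is more structured than what the paper offers: the paper states this proposition with no proof at all (its companion results are dispatched with ``follows directly from the definition of an automorphism,'' i.e.\ a brute-force expansion of $\varphi([e_i,e_1])=[\varphi(e_i),\varphi(e_1)]$ in coordinates). Your route packages that computation conceptually: the lower central series gives the zeros in columns $3,\dots,n$ for free, the intrinsic characterization $I=\{x:(\operatorname{ad}x)^2=0\}$ (equivalently, $I$ is the centralizer of $L^2$) kills the $e_1$-component of $\varphi(e_2)$ without expanding any Jacobi-type identities, and the recursion $\varphi(e_{k+1})=a_1[\varphi(e_k),e_1]$ — valid because $\varphi(e_k)$ and the tail of $\varphi(e_1)$ both lie in the abelian ideal $I$ — produces exactly the stated columns. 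What the direct computation buys is nothing beyond brevity of exposition; what your approach buys is that the only nonobvious vanishing (the $(1,2)$ entry) is isolated and justified by a characteristic-ideal argument rather than by solving equations. Two minor points worth making explicit: the characterization of $I$ (and hence the proposition itself) requires $n\ge 4$, since $n_{3,1}$ is the Heisenberg algebra with a strictly larger automorphism group — you correctly flag this; and since the statement only asserts the necessary form of $\varphi$, you need not verify that every matrix of this shape with $a_1b_2\ne 0$ is an automorphism, though your observation that $I$ is abelian makes that converse immediate as well.
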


In the following theorem we give all one-dimensional non-split central extensions of the filiform Lie algebra $n_{n,1}.$

\begin{thm} \label{thm3.4} An arbitrary non-split central extension of the algebra $n_{n,1}$
is isomorphic to one of the following pairwise non-isomorphic algebras
$$n_{n+1,1}, \quad Q_{n+1} \quad \text{and} \quad L_k(2\leq k\leq \Big\lfloor \frac{n}{2}\Big\rfloor):\begin{cases}
[e_i,e_1]=e_{i+1}, & 2\leq i\leq n,\\
[e_i,e_{2k+1-i}]=(-1)^ie_{n+1}, & 2\leq i\leq k,\end{cases}$$

where $\lfloor n \rfloor$ is an integer part of $n.$

\end{thm}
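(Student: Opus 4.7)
The plan is to classify the $\operatorname{Aut}(n_{n,1})$-orbits on the set of non-split one-dimensional central extensions of $n_{n,1}$, following the algorithm recalled in Section~2.

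By Proposition~3.1, every class $[\psi]\in H^2(n_{n,1},\mathbb{C})$ can be written uniquely as
$$\psi = \alpha\,\Delta_{n,1} + \sum_{k=2}^{\lfloor(n+1)/2\rfloor} \alpha_k\,\Delta_k^{\ast},\qquad \Delta_k^{\ast}:=\sum_{i=2}^k (-1)^i \Delta_{i,2k+1-i}.$$
Since $Z(n_{n,1})=\langle e_n\rangle$, the non-split condition $\operatorname{Ann}(\psi)\cap Z(n_{n,1})=0$ is equivalent to $\psi(e_n,-)\neq 0$. An index inspection shows that $e_n$ occurs in $\Delta_{n,1}$, and occurs in some $\Delta_k^{\ast}$ only when $n$ is odd and $k=(n+1)/2$ (through the term $\Delta_{2,n}$). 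Hence non-splitness amounts to $\alpha\neq 0$ if $n$ is even, and to $\alpha\neq 0$ or $\alpha_{(n+1)/2}\neq 0$ if $n$ is odd.

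Next I act by a generic automorphism $\varphi$ as described in the Proposition preceding the theorem. With only the diagonal part $\varphi(e_1)=a_1 e_1$, $\varphi(e_2)=b_2 e_2$ (so $\varphi(e_j)=a_1^{j-2}b_2 e_j$ for $j\geq 2$), one finds $\varphi\Delta_{n,1}=a_1^{n-1}b_2\,\Delta_{n,1}$ and $\varphi\Delta_k^{\ast}=a_1^{2k-3}b_2^{2}\,\Delta_k^{\ast}$; because these two characters are independent, the diagonal torus suffices to rescale any pair of non-zero coefficients simultaneously to $1$. The upper-triangular parameters $a_j,b_j$ with $j\geq 2$ modify each $\Delta_k^{\ast}$ by a coboundary plus a combination of $\Delta_{k'}^{\ast}$ with $k'<k$, but leave the class of $\Delta_{n,1}$ unchanged, because no $e_n$-coefficient can arise in the pullback of $\Delta_k^{\ast}$ for $k\leq\lfloor(n+1)/2\rfloor$. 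I would use these parameters in a downward induction on $k$ to clear every $\alpha_{k'}$ below the leading index, obtaining the canonical representatives $\Delta_{n,1}$ (giving $n_{n+1,1}$), $\Delta_{(n+1)/2}^{\ast}$ for $n$ odd (giving $Q_{n+1}$), and $\Delta_{n,1}+\Delta_k^{\ast}$ for $2\leq k\leq\lfloor n/2\rfloor$ (giving $L_k$).

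A separate check handles the borderline case with $n$ odd and both $\alpha\neq 0$ and $\alpha_{(n+1)/2}\neq 0$: the shift $\tilde{e}_1=e_1+c\,e_2$ with a suitable scalar $c$ cancels $[e_n,e_1]$ against the bracket $[e_n,e_2]$ produced by $\Delta_{(n+1)/2}^{\ast}$ and leaves every $[e_j,\tilde{e}_1]$ with $2\leq j<n$ unaffected, so the orbit collapses to the $Q_{n+1}$ representative rather than producing a spurious $L_{(n+1)/2}$. Pairwise non-isomorphism of $n_{n+1,1}$, $Q_{n+1}$ and the $L_k$ can then be verified via standard invariants: the natural gradation separates $n_{n+1,1}$ from $Q_{n+1}$ (only these two naturally graded filiform algebras occur in the list), while the dimension of the maximal abelian ideal, or equivalently the index $k$ read off from the unique extra bracket outside the chain $[e_i,e_1]=e_{i+1}$, distinguishes the algebras $L_k$ among themselves. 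The principal technical obstacle is the downward induction above: writing $\varphi\Delta_k^{\ast}$ modulo $B^2$ explicitly and checking that the off-diagonal parameters $a_j,b_j$ provide enough freedom to kill every lower $\Delta_{k'}^{\ast}$ without disturbing the already-normalised leading term.
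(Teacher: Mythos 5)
Your proposal is correct and follows essentially the same route as the paper: the same basis of $H^2(n_{n,1},\mathbb{C})$, the same non-splitness criterion $\alpha\neq 0$ (resp. $(\alpha,\alpha_{(n+1)/2})\neq(0,0)$ for $n$ odd), and the same normalisation exploiting the lower-triangular action of $\operatorname{Aut}(n_{n,1})$ on the classes $\Delta_k^{\ast}$, including the $a_2$-shift that absorbs $\Delta_{n,1}$ into $\Delta_{(n+1)/2}^{\ast}$ when $n$ is odd. The ``downward induction'' you defer is exactly what the paper carries out via its explicit recursive choice of the parameters $b_{2t}$, and since each $b_{2t}$ enters the relevant coefficient linearly with non-vanishing factor $2\alpha_k b_2$, that step indeed closes.
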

\begin{rem} The algebra $Q_{n+1}$ appears only in case of $n$ is odd.
\end{rem}
\begin{proof}

In the proof of the theorem we consider the cases $n$ is even and odd separately.

\textbf{Case $n$ is even.} Let us denote
$$\nabla_1=[\Delta_{n,1}], \quad \nabla_j=\left[\sum\limits_{i=2}^{j}(-1)^i\Delta_{i,2j+1-i}\right],\quad  2\leq j\leq \frac{n}{2} .$$

Since
$$\left(\begin{array}{cccccccc}
0&0&0&0&0&\ldots&0&-\alpha_1^*\\
0&0&\alpha_2^*&0&\alpha_3^*&\ldots&\alpha_{\frac{n}{2}}^*&0\\
0&-\alpha_2^*&0&-\alpha_3^*&0&\ldots&0&0\\
0&0&\alpha_3^*&0&\alpha_4^*&\ldots&0&0\\
0&-\alpha_3^*&0&-\alpha_4^*&0&\ldots&0&0\\
\ldots&\ldots&\ldots&\ldots&\ldots&\ldots&\ldots\\
0&-\alpha_{\frac{n}{2}}^*&0&0&0&\ldots&0&0\\
\alpha_1^*&0&0&0&0&\ldots&0&0\end{array}\right)=$$
$$(\varphi)^T\cdot\left(\begin{array}{cccccccc}
0&0&0&0&0&\ldots&0&-\alpha_1\\
0&0&\alpha_2&0&\alpha_3&\ldots&\alpha_{\frac{n}{2}}&0\\
0&-\alpha_2&0&-\alpha_3&0&\ldots&0&0\\
0&0&\alpha_3&0&\alpha_4&\ldots&0&0\\
0&-\alpha_3&0&-\alpha_4&0&\ldots&0&0\\
\ldots&\ldots&\ldots&\ldots&\ldots&\ldots&\ldots\\
0&-\alpha_{\frac{n}{2}}&0&0&0&\ldots&0&0\\
\alpha_1&0&0&0&0&\ldots&0&0\end{array}\right)\cdot \varphi$$
for $\varphi \in \operatorname{Aut}(n_{n,1}),$ then for any $\psi=\langle\alpha_1\nabla_1+\alpha_2\nabla_2+\ldots+\alpha_{\frac{n}{2}}\nabla_{\frac{n}{2}}\rangle$, we have the action of the automorphism group
on the subspace $\langle \psi\rangle$ as
$$\langle\alpha_1^*\nabla_1+\alpha_2^*\nabla_2+\ldots+\alpha_{\frac{n}{2}}^*\nabla_{\frac{n}{2}}\rangle$$
where
$$\alpha_1^{*}=\alpha_1a_1^{n-1}b_2,$$
$$\alpha_{k}^{*}=a_1^{2k-3}\left(\sum\limits_{i=2}^{\frac n 2 +2-k}(-1)^i\alpha_{i-2+k}b_i^2+2\sum\limits_{j=k+1}^{\frac{n}{2}}\sum\limits_{i=1}^{j-k}(-1)^{i+1}\alpha_j b_{i+1}b_{2j-i-(2k-3)}\right), \quad 2 \leq k \leq \frac n 2.$$

It is easy to see that $\operatorname{Ann}(\psi)\cap Z(n_{n,1}) =0$ if and only if  $\alpha_1\neq 0.$
Let us consider the following cases:
\begin{itemize}
\item If $\alpha_i=0$ for all $i \ (2\leq\ i \leq \frac{n}{2})$, then
$\alpha_i^{*}=0$  and
we have the representative $\langle\nabla_1\rangle.$

\item If $\alpha_2\neq 0$ and $ \alpha_i=0$ for all $i\ ( 3\leq\ i \leq \frac{n}{2})$, then
$$\begin{cases}\alpha_1^{*}=\alpha_1a_1^{n-1}b_2,\\
\alpha_2^{*}=\alpha_2a_1b_2^2.\end{cases}$$

Choosing $a_1=1, b_2=\frac{\alpha_1}{\alpha_2}$, we have the representative $\langle\nabla_1+\nabla_2\rangle.$

%

\item Now we consider general case. Let $\alpha_k\neq 0$  for some $k$ and $\alpha_i= 0$ for all $i \ (k+1 \leq i \leq \frac{n}{2}).$ Then choosing
$a_1=1,$ $b_2=\frac{\alpha_1}{\alpha_{k}},$ and
 $$b_{2t}=-\frac{1}{2\alpha_{k}b_2}\left(\sum\limits_{i=2}^{t+1}(-1)^i\alpha_{k-t-1+i}b_i^2+2\sum\limits_{j=k-t+2}^{k-1}\sum\limits_{i=2}^{j-k+t}(-1)^{i}\alpha_{j}b_{i}b_{2(j-k+t+1) -i}-2\alpha_{k}\sum\limits_{i=2}^{t-1}(-1)^{i}b_{i+1}b_{2t+1-i}\right),$$
where $2\leq k \leq\frac{n}{2},$ $2 \leq t \leq k-1,$
we have the representative $\langle\nabla_1+\nabla_{k}\rangle.$

It is easy to verify that all previous orbits are different, and we obtain
$${\bf T}_1(n_{n,1})=\operatorname{Orb}\langle\nabla_1\rangle\cup\operatorname{Orb}\langle\nabla_1+\nabla_2\rangle \cup \operatorname{Orb}\langle\nabla_1+\nabla_3\rangle \cup \ldots \cup \operatorname{Orb}\langle\nabla_1+\nabla_{\frac{n}{2}}\rangle.$$
\end{itemize}

Note that for the orbit $\operatorname{Orb}\langle\nabla_1\rangle$ corresponds the algebra $(n+1)$-dimensional naturally graded filiform Lie algebra  $n_{n+1,1}$ and for the orbits
$\operatorname{Orb}\langle\nabla_1+\nabla_k\rangle$ correspond the algebras $L_k$ for $2 \leq k \leq \lfloor \frac n 2 \rfloor.$

\textbf{Case $n$ is odd.}
Let us denote
$$\nabla_1=[\Delta_{n,1}],\quad \nabla_j=\left[\sum\limits_{i=2}^{j}(-1)^i\Delta_{i,2j+1-i}\right],  \quad 2\leq j\leq \frac{n+1}{2}.$$


Then for any
$\psi=\langle\alpha_1\nabla_1+\alpha_2\nabla_2+\ldots+\alpha_{\frac{n+1}{2}}\nabla_{\frac{n+1}{2}}\rangle$, we have the action of the automorphism group
on the subspace $\langle \psi\rangle$ as
$$\langle\alpha_1^*\nabla_1+\alpha_2^*\nabla_2+\ldots+\alpha_{\frac{n+1}{2}}^*\nabla_{\frac{n+1}{2}}\rangle,$$
where
$$\alpha_1^{*}=a_1^{n-2}b_2(\alpha_1 a_1-\alpha_{\frac{n+1}{2}}a_2), \quad \alpha_{\frac{n+1}{2}}^{*}=\alpha_{\frac{n+1}{2}}a_1^{n-2}b_2^2,$$
$$\alpha_{k}^{*}=a_1^{2k-3}\left(\sum\limits_{i=2}^{\frac {n+1} 2 +2-k}(-1)^i\alpha_{i-2+k}b_i^2+2\sum\limits_{j=k+1}^{\frac{n+1}{2}}\sum\limits_{i=1}^{j-k}(-1)^{i+1}\alpha_j b_{i+1}b_{2j-i-(2k-3)}\right), \quad 2 \leq k \leq \frac {n-1} 2,$$

%
%
%

Note that $\operatorname{Ann}(\psi^0)\cap Z(n_{n,1}) =0$ if and only if  $(\alpha_1,\alpha_{\frac{n+1}{2}})\neq (0,0)$.

\begin{itemize}
\item Let $\alpha_{\frac{n+1}{2}}\neq 0.$ Then choosing
 $$a_1=1,\quad a_2=\frac{\alpha_1}{\alpha_{\frac{n+1}{2}}}, \quad b_2=\sqrt{\frac{1}{\alpha_{\frac{n+1}{2}}}},$$
$$b_{2t}=-\frac{1}{2\alpha_{\frac{n+1}{2}}b_2}\left(\sum\limits_{i=2}^{t+1}(-1)^i\alpha_{\frac{n-1}{2}-t+i}b_i^2+2\sum\limits_{j=\frac{n+5}{2}-t}^{\frac{n-1}{2}}
  \sum\limits_{i=2}^{j+t-\frac{n+1}{2}}(-1)^{i}\alpha_{j}b_{i}b_{2(j+t)-n+1-i}-2\alpha_{\frac{n+1}{2}}\sum\limits_{i=2}^{t-1}(-1)^{i}b_{i+1}b_{2t+1-i}\right)$$
  where $2\leq t \leq\frac{n-1}{2}$, we have the representative $\langle\nabla_{\frac{n+1}{2}}\rangle.$

\item Let $\alpha_{\frac{n+1}{2}}= 0,$ then we consider following cases:

\begin{itemize}
\item If $\alpha_1\neq0$ and $\alpha_i=0$ for $2\leq\ i \leq \frac{n-1}{2}$, then by choosing $a_1=1, b_2=\frac{1}{\alpha_1}$, we have the representative $\langle\nabla_1\rangle.$

\item If $\alpha_2\neq 0$ and $\alpha_i=0$ for $3\leq\ i \leq \frac{n-1}{2}$, then we get
$$\begin{cases}\alpha_1^{*}=\alpha_1a_1^{n-1}b_2,\\
\alpha_2^{*}=\alpha_2a_1b_2^2,\end{cases}$$
and choosing $a_1=1, b_2=\frac{\alpha_1}{\alpha_2}$, we have the representative $\langle\nabla_1+\nabla_2\rangle.$

%
%

\item If $\alpha_k\neq 0$  for some $2\leq k \leq\frac{n-1}{2}$ and $\alpha_i= 0$ for $k+1 \leq i \leq \frac{n-1}{2}.$ Then choosing
$$a_1=1, \quad b_2=\frac{\alpha_1}{\alpha_{k}},$$
$$b_{2t}=-\frac{1}{2\alpha_{k}b_2}\left(\sum\limits_{i=2}^{t+1}(-1)^i\alpha_{k-t-1+i}b_i^2+2\sum\limits_{j=k-t+2}^{k-1}\sum\limits_{i=2}^{j+t-k}(-1)^{i}\alpha_{j}b_{i}b_{2(j+t-k+1) -i}-2\alpha_{k}\sum\limits_{i=2}^{t-1}(-1)^{i}b_{i+1}b_{2t+1-i}\right),$$
where $2 \leq t \leq k-1,$
we have the representative $\langle\nabla_1+\nabla_{k}\rangle.$
\end{itemize}

%
%
%

\end{itemize}

It is easy to verify that all previous orbits are different and we obtain
$${\bf T}_1(n_{n,1})=\operatorname{Orb}\langle\nabla_1\rangle \cup \operatorname{Orb}\langle\nabla_1+\nabla_2\rangle \cup \operatorname{Orb}\langle\nabla_1+\nabla_3\rangle \cup \ldots \cup\operatorname{Orb}\langle\nabla_1+\nabla_{\frac{n-1}{2}}\rangle \cup \operatorname{Orb}\langle\nabla_{\frac{n+1}{2}}\rangle$$

Note that for the orbit $\operatorname{Orb}\langle\nabla_{\frac{n+1}{2}}\rangle$ corresponds the algebra $Q_{n+1}$ and for the orbits
$\operatorname{Orb}\langle\nabla_1\rangle$ and $\operatorname{Orb}\langle\nabla_1+\nabla_k\rangle$ for $2 \leq k \leq \frac {n-1} 2$ similarly to the case of $n$ is even corresponds
 the algebras $n_{n+1,1}$ and $L_k $ for $2 \leq k \leq \lfloor \frac n 2 \rfloor.$

\end{proof}

\section{Extension of solvable Lie algebra with filiform nilradicals of codimension $2$}

In this section using an algorithm for constructing solvable Lie algebras which is given in Theorem \ref{thm1}, we obtain all extensions of solvable Lie algebras whose
nilradical is the naturally graded filiform Lie algebras which codimension is nilradical is maximal. Since the nilpotent algebra $Q_{2n}$ has not a non-split central extension, then the solvable algebra with this nilradical also has not a non-split extension. Thus, it is sufficient to consider extension of
solvable Lie algebra $\mathfrak{s}_{n,2}$

\begin{prop} Any automorphism of the algebra $\mathfrak{s}_{n,2}$ has the following form:
$$\begin{cases}\phi(x_1) = x_1 + \beta e_1 + \sum\limits_{k=3}^{n} \Big((k-2)b_{k}+ \beta b_{k-1}\Big)e_k, \\[1mm] \phi(x_2) = x_2 + \sum\limits_{k=2}^{n} b_{k}e_k, \\[1mm]
\phi(e_1) = \alpha_1 e_1 + \alpha_1 \sum\limits_{k=3}^{n} b_{k-1}e_k, \\[1mm]
\phi(e_i) = \alpha_1^{i-2} \alpha_2 e_i + \alpha_1^{i-2} \alpha_2 \sum\limits_{k=3}^{n+2-i} \frac {(-1)^{k}\beta^{k-2}} {(k-2)!}e_{i-2+k}, & 2 \leq i \leq n.
\end{cases}$$
\end{prop}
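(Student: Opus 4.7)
The plan is to exploit the fact that the nilradical $N=\langle e_1,e_2,\ldots,e_n\rangle\cong n_{n,1}$ is a characteristic ideal of $\mathfrak{s}_{n,2}$. Therefore any $\phi\in\operatorname{Aut}(\mathfrak{s}_{n,2})$ restricts to an automorphism of $N$, and by the preceding Proposition I may write $\phi(e_1)=\sum_{j=1}^{n}a_j e_j$, $\phi(e_2)=\sum_{j=2}^{n}c_j e_j$ with $a_1c_2\neq 0$; for $i\geq 3$ the value $\phi(e_i)=[\phi(e_{i-1}),\phi(e_1)]$ is then forced. Since $\mathfrak{s}_{n,2}/N$ is $2$-dimensional abelian, I can write $\phi(x_1)=p_{11}x_1+p_{12}x_2+n_1$ and $\phi(x_2)=p_{21}x_1+p_{22}x_2+n_2$ with $n_1,n_2\in N$ and $\det(p_{ij})\neq 0$. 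All remaining work is to determine the $p_{ij}$, the $n_i$, and the constraints on the $a_j,c_j$ by matching each defining bracket of $\mathfrak{s}_{n,2}$ under $\phi$.

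\textbf{Normalization of the block on $x_1,x_2$.} Applying $\phi$ to the four relations $[e_1,x_1]=e_1$, $[e_1,x_2]=0$, $[e_2,x_1]=0$ and $[e_2,x_2]=e_2$ and reading off the coefficients of $e_1$ and $e_2$ in each identity, the known eigenvalues of $\operatorname{ad}_{x_1}$ and $\operatorname{ad}_{x_2}$ on $e_1,e_2$ collapse the system to $p_{11}=p_{22}=1$, $p_{12}=p_{21}=0$. The same computations force the coefficient of $e_2$ in $\phi(e_1)$ to vanish (i.e.\ $a_2=0$) and the coefficient of $e_2$ in $n_1$ to vanish.

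\textbf{Extraction of parameters.} Write $n_1=\beta e_1+\sum_{k\geq 3}q_k e_k$ and $n_2=\sum_{k\geq 2}b_k e_k$, and set $\alpha_1=a_1$, $\alpha_2=c_2$. The three remaining identities on $\phi(e_1)$ and $\phi(e_2)$ yield, in turn: from $[\phi(e_1),\phi(x_2)]=0$ the equalities $a_j=\alpha_1 b_{j-1}$ for $3\leq j\leq n$, producing the stated form of $\phi(e_1)$; from $[\phi(e_2),\phi(x_1)]=0$ the recursion $(k-2)c_k+\beta c_{k-1}=0$, whose unique solution is $c_k=\frac{(-1)^k\beta^{k-2}}{(k-2)!}\alpha_2$, producing the stated form of $\phi(e_2)$; and from $[\phi(e_1),\phi(x_1)]=\phi(e_1)$ the equalities $q_j=(j-2)b_j+\beta b_{j-1}$ for $3\leq j\leq n$, producing the stated form of $\phi(x_1)$. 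The form of $\phi(x_2)=x_2+\sum_{k\geq 2}b_k e_k$ is simply the definition of the $b_k$ and needs no additional constraint.

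\textbf{Consistency and the formula for $\phi(e_i)$, $i\geq 3$.} The closed formula for $\phi(e_i)$ follows by induction on $i$ from $\phi(e_i)=[\phi(e_{i-1}),\phi(e_1)]$ inside $N$; since $[e_j,e_k]=0$ for $j,k\geq 2$, only the $\alpha_1 e_1$ part of $\phi(e_1)$ contributes, and the induction immediately propagates the factor $\alpha_1^{i-2}\alpha_2$ as well as the truncated power series $\sum_{k=3}^{n+2-i}\tfrac{(-1)^k\beta^{k-2}}{(k-2)!}e_{i-2+k}$. It then remains to check that the relations $[e_i,x_1]=(i-2)e_i$ and $[e_i,x_2]=e_i$ for $3\leq i\leq n$ together with $[x_1,x_2]=0$ hold for the $\phi$ just constructed; by Jacobi in $\mathfrak{s}_{n,2}$ these are consequences of the four relations already imposed in the normalization step, so no new condition arises. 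The only obstacle is combinatorial bookkeeping: tracking the interlocking recursions among $a_j,c_j,b_j,q_j$ and recognizing the exponential-type coefficients $(-1)^k\beta^{k-2}/(k-2)!$ as the solution of $(k-2)c_k=-\beta c_{k-1}$; once this is done, the claimed formulas follow directly.
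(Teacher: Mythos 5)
Your strategy (restrict to the nilradical, invoke the known automorphisms of $n_{n,1}$, then impose the remaining defining brackets) is the same direct computation the paper intends, and most of your bookkeeping checks out: the normalization $p_{11}=p_{22}=1$, $p_{12}=p_{21}=0$, $a_2=0$, the relations $a_j=\alpha_1 b_{j-1}$, the recursion $(k-2)c_k+\beta c_{k-1}=0$ with its factorial solution, and the induction giving $\phi(e_i)$ for $i\ge 3$ are all correct. However, there is a genuine gap in how you pin down $\phi(x_1)$. The identity $[\phi(e_1),\phi(x_1)]=\phi(e_1)$ only yields $q_j=(j-2)b_j+\beta b_{j-1}$ for $3\le j\le n-1$: the top coefficient $q_n$ enters that identity only through the term $-\alpha_1 q_n e_{n+1}$, which is truncated away because $e_{n+1}$ does not exist, and $q_n$ likewise never appears in any bracket $[\phi(e_i),\phi(x_1)]$ or $[\phi(e_i),\phi(x_2)]$ with $i\ge 2$, since $\langle e_2,\dots,e_n\rangle$ is abelian. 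Consequently your closing claim that $[x_1,x_2]=0$ "imposes no new condition by Jacobi" is false: a map satisfying everything you imposed but with arbitrary $q_n$ fails to be an automorphism precisely because $[\phi(x_1),\phi(x_2)]=\bigl(q_n-(n-2)b_n-\beta b_{n-1}\bigr)e_n\neq 0$. Writing out $[\phi(x_1),\phi(x_2)]=0$ is the step that gives $q_k=(k-2)b_k+\beta b_{k-1}$ for \emph{all} $3\le k\le n$ (the Jacobi argument does work for the relations $[e_i,x_1]=(i-2)e_i$ and $[e_i,x_2]=e_i$, $i\ge 3$, but not for $[x_1,x_2]$, since $x_1,x_2$ are not generated by the elements whose brackets you already control).

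A smaller inaccuracy of the same kind: you assert that the $e_2$-coefficient of $n_1$ vanishes, and you silently take $n_2$ without an $e_1$-component, attributing both to "reading off the coefficients of $e_1$ and $e_2$" in the four normalization identities. Neither is visible at those components: the vanishing of the $e_1$-part of $n_2$ comes from the $e_3$-component of $[\phi(e_2),\phi(x_2)]=\phi(e_2)$, and the vanishing of the $e_2$-part of $n_1$ comes from the $e_3$-component of $[\phi(e_1),\phi(x_1)]=\phi(e_1)$ (or from the $e_1$- and $e_2$-components of $[\phi(x_1),\phi(x_2)]=0$). Both facts are true and easy, but as written your argument does not establish them; including the relation $[x_1,x_2]=0$ explicitly repairs all of these points at once and completes the proof.
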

\begin{proof}The proof follows directly from the definition of an automorphism.
\end{proof}

Now we give the description of $Z^2(\mathfrak{s}_{n,2},\theta, \mathbb{C}),$ i.e., $2$-cocycle on $\mathfrak{s}_{n,2}$ with respect to $\theta$ with one-dimensional abelian algebra $\mathbb{C}=\{e_{n+1}\}.$
Since we consider $\theta: \mathfrak{s}_{n,2} \rightarrow End(\mathbb{C}) $ with condition $\operatorname{Ker} \theta \supset n_{n,1},$ we obtain that $$\theta(x_1)(e_{n+1})=\alpha e_{n+1}, \quad \theta(x_2)(e_{n+1})=\beta e_{n+1}.$$


\begin{prop} \label{DN} Any element $\psi \in Z^2(\mathfrak{s}_{n,2},\theta, \mathbb{C})$ is formed by the following:

1) If $(\alpha, \beta) =(1-n, -1),$ then
$$\begin{array}{llll} \psi(x_1, x_2)= b_{1,2}, & \psi(x_1, e_2)= b_{1,4}, \\
\psi(x_1, e_1)= (n-2)b_{2,3}, & \psi(x_2, e_1)= b_{2,3}, \\
\psi(x_1, e_{j+1})= (j-n)b_{3,j+2}, & \psi(e_1, e_j)= b_{3,j+2}, & 2 \leq j \leq n. \\
\end{array}$$

  2) If  $(\alpha, \beta) =(2-n, -2)$, then
 $$\begin{array}{llll} \psi(x_1, x_2)= b_{1,2},& \psi(x_1, e_1)= \frac{n-3} 2 b_{2,3}, & \psi(x_2, e_{1})= b_{2,3},\\
\psi(x_1, e_2)= (n-2)b_{2,4}, &  \psi(x_2, e_{2})= b_{2,4}, \\
\psi(x_1, e_j)= (n-j)b_{2,j+2}, &  \psi(x_2, e_{j})= b_{2,j+2},& \psi(e_1,e_{j-1})=-b_{2,j+2}&  3 \leq j \leq n, \\
\psi(e_i,e_{n+2-i})=(-1)^ib_{4,n+2},&  2 \leq i \leq \frac{n+1} 2.\\
\end{array}$$
Note that in case of $n$ is even, $b_{4,n+2}=0$.

3) If $(\alpha, \beta) \neq (1-n, -1)$ and $(\alpha, \beta) \neq(2-n, -2)$, then
$$\begin{array}{llll} \psi(x_1, x_2)= b_{1,2} & \psi(x_1, e_1)= (1+\alpha) b_{2,3}, & \psi(x_2, e_{1})= \beta b_{2,3},\\
 \psi(x_1, e_2)= \alpha b_{2,4}, & \psi(x_2, e_{2})= (1+\beta) b_{2,4},\\
\psi(x_1, e_{j})= (j-2+\alpha)b_{2,j+2} & \psi(x_2, e_{j})= (1+\beta)b_{2,j+2}&
\psi(e_1,e_{j-1})=b_{2,j+2} & 3 \leq j \leq n.
\end{array}$$

\end{prop}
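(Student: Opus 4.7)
My plan is to evaluate the 2-cocycle identity \eqref{eq2.1} on a short list of triples of basis vectors and reduce the description of $Z^2(\mathfrak{s}_{n,2},\theta,\mathbb{C})$ to a finite linear system whose solvability depends on the pair $(\alpha,\beta) := (\theta(x_1),\theta(x_2))$. Since $\operatorname{Ker}\theta \supset n_{n,1}$, one has $\theta(e_i)=0$ for all $i$, so \eqref{eq2.1} collapses to the ordinary 2-cocycle condition for $n_{n,1}$ whenever all three arguments lie in the nilradical. By Proposition~\ref{prop3.1} the restriction $\psi|_{n_{n,1}\times n_{n,1}}$ is therefore a linear combination of $\Delta_{i,1}$ ($2 \le i \le n$) and $\sum_{j=2}^{k}(-1)^{j}\Delta_{j,2k+1-j}$ ($2 \le k \le \lfloor(n+1)/2\rfloor$); I parameterize it by $a_i := \psi(e_i,e_1)$ and by the coefficients $c_k$ of the second family, and introduce $p := \psi(x_1,x_2)$, $q_1 := \psi(x_1,e_1)$, $s_1 := \psi(x_2,e_1)$, $r_j := \psi(x_1,e_j)$, $u_j := \psi(x_2,e_j)$ for the remaining unknowns.

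Three families of triples will then produce the non-nilradical constraints. First, $(x_1,e_1,e_j)$ and $(x_2,e_1,e_j)$ for $2 \leq j \leq n-1$ yield the recursions $r_{j+1} = -(j-1+\alpha)\,a_j$ and $u_{j+1} = -(1+\beta)\,a_j$, together with the boundary identities $(n-1+\alpha)\,a_n = 0$ and $(1+\beta)\,a_n = 0$ coming from $j=n$. Second, $(x_1,e_i,e_j)$ and $(x_2,e_i,e_j)$ for $i,j \geq 2$, using $[e_i,e_j]=0$, give $(i+j-4+\alpha)\,\psi(e_i,e_j) = 0$ and $(2+\beta)\,\psi(e_i,e_j) = 0$, which force $c_k = 0$ off the locus $\{\alpha = 3-2k,\ \beta = -2\}$. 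Third, $(x_1,x_2,e_1)$ and $(x_1,x_2,e_2)$ produce the linking relations $(1+\alpha)\,s_1 = \beta\,q_1$ and $(1+\beta)\,r_2 = \alpha\,u_2$. A short check confirms that every remaining triple, such as $(x_1,x_2,e_j)$ for $j \geq 3$, is an automatic consequence of the identities just derived.

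The stated description now follows by case analysis on $(\alpha,\beta)$. In the exceptional case $(\alpha,\beta) = (1-n,-1)$ the boundary conditions on $a_n$ become trivial and the first linking relation becomes $q_1 = (n-2)\,s_1$; this, together with $u_j = 0$ for $j \geq 2$, recovers part~(1). In the exceptional case $(\alpha,\beta) = (2-n,-2)$ the identity $\alpha = 3 - 2\cdot\frac{n+1}{2}$ allows $c_{(n+1)/2}$ to be free (vacuous when $n$ is even, whence $b_{4,n+2}=0$), while the two linking relations become $q_1 = \frac{n-3}{2}\,s_1$ and $r_2 = (n-2)\,u_2$, giving part~(2). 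Outside both loci, $a_n = 0$ and all $c_k$ vanish, and the linking relations allow $r_j$, $u_j$, and $\psi(e_1,e_{j-1})$ to be encoded by a single scalar $b_{2,j+2}$ per degree, producing the uniform formulae of part~(3); the parameter $b_{1,2} := \psi(x_1,x_2)$ is free in every case.

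The computation on any individual triple is short, but the main obstacle I expect is bookkeeping: tracking the boundary indices $j=2$ and $j=n$, choosing the normalisations of the parameters $b_{2,j+2}$ so that the proportionality coefficients in (1)--(3) emerge consistently from both linking relations simultaneously, and handling the parity of $n$ correctly in the exceptional case (2).
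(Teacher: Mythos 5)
Your route is essentially the paper's: you evaluate \eqref{eq2.1} on the same basis triples, and every relation you extract (the recursions $r_{j+1}=-(j-1+\alpha)a_j$, $u_{j+1}=-(1+\beta)a_j$, the boundary identities $(n-1+\alpha)a_n=(1+\beta)a_n=0$, the constraints $(i+j-4+\alpha)\psi(e_i,e_j)=0$ and $(2+\beta)\psi(e_i,e_j)=0$ for $i,j\geq 2$, and the two linking relations) coincides, after the relabelling by $b_{i,j}$, with the system of restrictions written in the paper's proof; your only departure is to quote Proposition \ref{prop3.1} for $\psi|_{n_{n,1}\times n_{n,1}}$ instead of recomputing it, which is a legitimate economy, and your check that the triples $(x_1,x_2,e_j)$, $j\geq 3$, give nothing new is correct. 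The identifications of the coefficients in cases (1) and (2), including the parity remark about $b_{4,n+2}$, also come out right.

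The one step that does not close is the sentence ``outside both loci, $a_n=0$ and all $c_k$ vanish'', and it contradicts your own correct observation that $c_k$ is annihilated only off the locus $\{\alpha=3-2k,\ \beta=-2\}$. For every integer $k$ with $2\leq k\leq\lfloor\frac{n+1}{2}\rfloor$ and $k<\frac{n+1}{2}$, the pair $(\alpha,\beta)=(3-2k,-2)$ differs from both $(1-n,-1)$ and $(2-n,-2)$, hence falls under your case (3), and yet $c_k$ need not vanish there: for $n=5$ and $(\alpha,\beta)=(-1,-2)$ the form with $\psi(e_2,e_3)=-\psi(e_3,e_2)=1$ and all other basis products zero satisfies \eqref{eq2.1} on every triple. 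So as written your case (3) silently excludes these parameter values; to be fair, the paper's own proof makes exactly the same jump from $(i+j-6+\alpha)b_{i,j+2}=0$, $(2+\beta)b_{i,j+2}=0$ to $b_{i,j+2}=0$, so you have not introduced a new error, but to make the argument airtight you should either list these pairs as further exceptional subcases or note that they are harmless for the sequel, since there $n-1+\alpha\neq 0$ forces $\psi(e_1,e_n)=0$, hence $e_n\in\operatorname{Ann}(\psi^0)\cap Z(n_{n,1})\neq 0$ and no admissible extension of $\mathfrak{s}_{n,2}$ arises from them. (A similar, milder caveat: encoding $(q_1,s_1)$, respectively $(r_2,u_2)$, by a single scalar through the linking relations fails exactly when both coefficients vanish, i.e.\ at $(\alpha,\beta)=(-1,0)$, respectively $(0,-1)$, another degenerate subcase that the displayed formulas of part (3) do not capture.)
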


\begin{proof} For any $\psi \in Z^2(\mathfrak{s}_{n,2},\theta, \mathbb{C})$
denote by $$\psi(x_1, x_2)= b_{1,2}, \quad \psi(x_1, e_j)= b_{1,j+2}, \quad \psi(x_2, e_j)= b_{2,j+2}, \quad 1\leq j\leq n, $$
$$\psi(e_i, e_j)= b_{i+2,j+2}, \quad 1\leq i, j\leq n.$$

From the condition of a 2-cocycle by straightforward computation we have the following restrictions

 $$\left\{\begin{array}{llll}
b_{i,j}=-b_{i+1,j-1}, & 4\leq i\leq n+1, & i+1\leq j\leq n+2,\\[1mm]
b_{i,n+2}=0,& 5\leq i\leq n-1,\\[1mm]
\beta b_{1,3}=(1+\alpha)b_{2,3},\\[1mm]
(1+\beta )b_{1,j+2}=(j-2+\alpha) b_{2,j+2}, &2\leq j\leq n,\\[1mm]
b_{1,j+3}=((j-1)+\alpha) b_{3,j+2}, & 2\leq j\leq n-1,\\[1mm]
b_{2,j+3}=(1+\beta) b_{3,j+2}, &2\leq j\leq n-1,\\[1mm]
(n-1+\alpha) b_{3,n+2}=0,\\[1mm]
(i+j-6+\alpha) b_{i,j+2}=0, & 4\leq i\leq n+1, & i-1\leq j\leq n,\\[1mm]
(1+\beta) b_{3,n+2}=0,\\[1mm]
(2+\beta) b_{i,j+2}=0,& 4\leq i\leq n+1, & 3\leq j\leq n.
\end{array}\right.$$

Since $b_{i,j}=-b_{i+1,j-1},$ for  $4\leq i\leq n+1, i+1\leq j\leq n+2,$ we have that:
\begin{itemize}
\item If $n$ is even, then
$b_{4,n+2}=-b_{5,n+1}=\ldots=b_{\frac{n}{2}+3,\frac{n}{2}+3}=0,$
\item If $n$ is odd, then
$b_{4,n+2}=-b_{5,n+1}=\ldots=b_{\frac{n+5}{2},\frac{n+7}{2}}=-b_{\frac{n+7}{2},\frac{n+5}{2}}.$
\end{itemize}

Thus, in case of $n$ is even, we have that  $b_{i,n+6-i}=0$ for $4 \leq i \leq \frac n 2 +3.$ Moreover, if
$(\alpha, \beta) =(1-n, -1),$ then we additionally have
$$\begin{cases}
b_{1,3}=(n-2)b_{2,3},\\[1mm]
b_{1,j}=(j-3-n) b_{3,j-1}, & 5\leq j\leq n+2,\\[1mm]
b_{2,j+2}=0, &2\leq j\leq n,\\[1mm]
b_{i,j+2}=0,& 4\leq i\leq n+1, \quad 3\leq j\leq n.
\end{cases}$$

If
$(\alpha, \beta) =(2-n, -2),$ then
$$\begin{cases}
b_{1,3}=\frac {n-3}2 b_{2,3},\\[1mm]
b_{1,j}=(n+2-j)b_{2,j}, & 4\leq j\leq n+2,\\[1mm]
b_{3,j-1}=-b_{2,j}, & 5\leq j\leq n+2,\\[1mm]
b_{3,n+2}=0,\\[1mm]
b_{i,j}=0, & 4\leq i\leq n+1, \quad i+1\leq j\leq n+2, \quad i+j \neq n+6.
\end{cases}$$

Note that in case of $n$ is even, $b_{4,n+2}$ is also equal to zero.

If $(\alpha, \beta) \neq (1-n, -1)$ and $(\alpha, \beta) \neq(2-n, -2)$, then
 $$\begin{cases}
\beta b_{1,3}=(1+\alpha)b_{2,3},\\[1mm]
(1+\beta )b_{1,j}=(j-4+\alpha) b_{2,j} , &4\leq j\leq n+2,\\[1mm]
b_{2,j}=(1+\beta) b_{3,j-1}, & 5\leq j\leq n+2,\\[1mm]
b_{3,n+2}=0,\\[1mm]
b_{i,j+2}=0, & 4\leq i\leq n+1,  \quad i-1\leq j\leq n.
\end{cases}$$

\end{proof}

Now we determine the elements of the space $B^2(\mathfrak{s}_{n,2},\theta, \mathbb{C}).$ Putting
$$f(x_1)=c_1e_{n+1}, \quad f(x_2)=c_2e_{n+1}, \quad f(e_i)=c_{i+2}e_{n+1}, \quad 3\leq i\leq n,$$
for any automorphism $\phi \in \operatorname{Aut}(\mathfrak{s}_{n,2})$ considering
$$df(y,z)=f\big([y,z]\big)+\theta\big(\phi(z)\big)(f(y))-\theta\big(\phi(y)\big)(f(z))$$ we have

$$\begin{array}{llll} df(x_1,x_2)=\beta c_1-\alpha c_2, \\ df(x_1,e_1)=-(1+\alpha)c_3,\\
df(x_1,e_2)=-\alpha c_4, \\ df(x_1,e_j)=-(j-2+\alpha)c_{j+2}, & 3\leq j\leq n, \\ df(x_2,e_1)=-\beta c_3, \\
df(x_2,e_j)=-(1+\beta)c_{j+2}, & 2\leq j\leq n, \\
df(e_1,e_j)=-c_{j+3}, & 2\leq j\leq n-1. \end{array}$$

It is not difficult to see that in cases of
\begin{itemize}\item  $(\alpha, \beta) \neq (1-n, -1)$ and $(\alpha, \beta) \neq(2-n, -2),$
\item $(\alpha, \beta) =(2-n, -2)$ and $n$ is even,
 \end{itemize}
 we have
 $\operatorname{Ann}(\psi^0)\cap Z(n_{n,1})=\{e_n\} \neq 0.$ Therefore, to get a non-split extension of the solvable Lie algebra $\mathfrak{s}_{n,2}$ it is enough to consider the cases
 $(\alpha, \beta) =(1-n, -1)$ and $(\alpha, \beta) =(2-n, -2),$ $n$ is odd.

In this two cases we have $\dim Z^2(\mathfrak{s}_{n,2},\theta, \mathbb{C})=n+2,$ $\dim B^2(\mathfrak{s}_{n,2},\theta, \mathbb{C})=n+1$
which implies $\dim H^2(\mathfrak{s}_{n,2},\theta, \mathbb{C}) =1$
 and a basis of $H^2(\mathfrak{s}_{n,2},\theta, \mathbb{C})$ is formed by the following cocycle
 $$H^2(\mathfrak{s}_{n,2},\theta,\mathbb{C}) =\langle [\psi]\rangle, \quad \psi(e_n, e_1) = e_{n+1}, \quad \ (\alpha, \beta) =(1-n, -1),$$
 $$H^2(\mathfrak{s}_{n,2},\theta, \mathbb{C}) =\langle[\psi]\rangle, \quad \psi(e_{n+2-i},e_i) = (-1)^ie_{n+1}, \ 2 \leq i \leq \frac{n+1} 2, \quad (\alpha, \beta) =(2-n, -2).$$

Now define new products of the extension algebra $\widetilde{L} = \mathfrak{s}_{n,2} \oplus \{e_{n+1}\}.$ In case of $(\alpha, \beta) =(1-n, -1)$ we have
$$\begin{array}{l}[e_n, e_1]=\psi(e_n,e_1) =e_{n+1},\\[1mm]
[e_{n+1}, x_1]=-\theta(x_1)e_{n+1}=(n-1)e_{n+1},\\[1mm]
[e_{n+1}, x_2]=-\theta(x_2)e_{n+1} = e_{n+1}.\end{array}$$

In the case of $(\alpha, \beta) =(2-n, -2)$ and $n$ is odd we have the following new products
$$\begin{array}{lll}[e_{n+2-i}, e_i]=\psi(e_{n+2-i}, e_i) = (-1)^ie_{n+1}, & 2\leq i \leq \frac{n+1} 2,\\[1mm]
[e_{n+1}, x_1]=-\theta(x_1)e_{n+1}=(n-2)e_{n+1},\\[1mm]
[e_{n+1}, x_2]=-\theta(x_2)e_{n+1}=2e_{n+1}.\end{array}$$

Therefore, we get the following main result of this Section.

\begin{thm} \label{thm4.3} Let $\widetilde{L}$ be an extension of the solvable Lie algebra $\mathfrak{s}_{n,2},$  then $dim (\widetilde{L}) = dim(\mathfrak{s}_{n,2})+1$ and $\widetilde{L}$ is isomorphic to one of the algebras $\mathfrak{s}_{n+1,2}$ and $\tau_{n+1,2}.$
%
\end{thm}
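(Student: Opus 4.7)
The plan is to leverage the structural analysis already carried out: it has been shown that $H^2(\mathfrak{s}_{n,2},\theta,\mathbb{C})$ is nontrivial only for $(\alpha,\beta)=(1-n,-1)$ and for $(\alpha,\beta)=(2-n,-2)$ with $n$ odd, and that in each admissible case the second cohomology is one-dimensional with an explicit representative cocycle already written down. Since a one-dimensional extension augments the dimension by one by construction, the dimension claim $\dim\widetilde{L}=\dim\mathfrak{s}_{n,2}+1$ is automatic, and it remains only to identify each of the two possible non-split extensions with one of the target algebras.

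For the case $(\alpha,\beta)=(1-n,-1)$, I would observe that the new product $[e_n,e_1]=e_{n+1}$ prolongs the filiform descending chain by one further step, so the nilradical of $\widetilde{L}$ is naturally graded filiform of dimension $n+1$ and therefore isomorphic to $n_{n+1,1}$. The two additional scalar actions $[e_{n+1},x_1]=(n-1)e_{n+1}$ and $[e_{n+1},x_2]=e_{n+1}$ are precisely the values $(i-2)e_i$ and $e_i$ at $i=n+1$ predicted by the defining table of $\mathfrak{s}_{n+1,2}$, so a direct comparison of the two tables gives the isomorphism $\widetilde{L}\cong\mathfrak{s}_{n+1,2}$.

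For the case $(\alpha,\beta)=(2-n,-2)$ with $n$ odd (so $n+1$ is even), write $n+1=2m$. I would first note that the new quadratic relations $[e_{n+2-i},e_i]=(-1)^ie_{n+1}$ for $2\le i\le m$ are exactly the extra brackets which promote $n_{n,1}\oplus\{e_{n+1}\}$ to $Q_{2m}$, so the nilradical of $\widetilde{L}$ is $Q_{n+1}$. The restrictions $\operatorname{ad}_{x_1}|_{Q_{n+1}}$ and $\operatorname{ad}_{x_2}|_{Q_{n+1}}$ are two linearly independent diagonal derivations, and the space of diagonal derivations of $Q_{2m}$ is two-dimensional, since a diagonal derivation is entirely determined by its weights on $e_1$ and $e_2$ through the constraints $w(e_{i+1})=w(e_i)+w(e_1)$ and $w(e_{2m})=2w(e_2)+(2m-3)w(e_1)$. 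A linear change of basis in the complement, explicitly $x:=x_1+2x_2$ and $y:=x_2$, then converts the explicit weight data coming from the extension into exactly the weights listed in the defining table of $\tau_{n+1,2}$; moreover, since $[x_1,x_2]=0$ in $\mathfrak{s}_{n,2}$, the transformed generators also commute.

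The main obstacle will be the identification in the second case: the derivations $x_1,x_2$ inherited from $\mathfrak{s}_{n,2}$ do not individually coincide with the canonical $x,y$ of $\tau_{n+1,2}$, so a nontrivial change of basis is needed before the weight formulas align. Once the correct substitution is in place, all remaining verifications reduce to comparing diagonal weights term by term.
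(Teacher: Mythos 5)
Your proposal follows essentially the same route as the paper: restrict to the two admissible weights $(\alpha,\beta)=(1-n,-1)$ and $(\alpha,\beta)=(2-n,-2)$ (the latter only for $n$ odd), use the one-dimensional representatives of $H^2(\mathfrak{s}_{n,2},\theta,\mathbb{C})$ to write the new products, and identify the resulting algebras with $\mathfrak{s}_{n+1,2}$ and $\tau_{n+1,2}$; your explicit substitution $x=x_1+2x_2$, $y=x_2$ (together with a harmless sign rescaling of $e_{n+1}$) merely spells out the identification that the paper asserts without detail. The only caveat is cosmetic: your $y=x_2$ acts with weight $0$ on $e_1$, which agrees with the Jacobi-consistent reading of the table for $\tau_{2n,2}$ (the listed range $1\le i\le 2n-1$ in $[e_i,y]=e_i$ must be read as $2\le i\le 2n-1$), so this is a typo in the target table rather than a gap in your argument.
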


Note that the algebra $\tau_{n+1,2}$ appears only in case of $n$ is odd, i.e., in case of $n$ is even there exists only one extension $\mathfrak{s}_{n+1,2}.$

\section{Extension of solvable Lie algebra with filiform nilradicals of codimension $1$}

In this section we obtain all one-dimensional extensions of solvable Lie algebras $$\mathfrak{s}^{1}_{n,1}(\beta), \quad \mathfrak{s}^{2}_{n,1}, \quad \mathfrak{s}^{3}_{n,1}, \quad \mathfrak{s}^{4}_{n,1}(\alpha_3, \alpha_4, \dots, \alpha_{n-1}).$$

First, we give the description of the group of automorphisms of these algebras.

\begin{prop}
Any automorphism of the  algebra $\mathfrak{s}^{1}_{n,1}(\beta)$ has the following form:
$$\phi(x) = x +a_1e_1+\frac{1}{b_1}\sum\limits_{k=2}^{n-1}(\beta+k-2) b_{k+1}e_{k}+a_ne_n,
\quad \phi(e_1) = b_1e_1+\sum\limits_{k=3}^{n} b_{k}e_k, $$
 $$\phi(e_2) = c_2\sum\limits_{k=2}^{n} \frac{(-1)^k}{(k-2)!}a_1^{k-2}e_k,\quad \phi(e_i) =b_{1}^{i-2}c_2\sum\limits_{k=i}^{n} \frac{(-1)^{k-i}}{(k-i)!}a_1^{k-i}e_k, \quad 3 \leq i \leq n.$$
 Any automorphism of the algebra $\mathfrak{s}^{2}_{n,1}$ has the following form:
$$\phi(x) = x +  \sum\limits_{k=2}^{n} a_{k}e_{k},
\quad \phi(e_1) = b_1e_1+b_1\sum\limits_{k=3}^{n} a_{k-1}e_k, $$
 $$\phi(e_2) = \sum\limits_{k=2}^{n} c_{k}e_k,\quad \phi(e_i) =b_{1}^{i-2}\sum\limits_{k=i}^{n} c_{k-i+2}e_k, \quad 3 \leq i \leq n.$$

Any automorphism of the algebra $\mathfrak{s}^{3}_{n,1}$ has the following form:
$$\phi(x) = x +  \sum\limits_{k=1}^{n} a_{k}e_{k},
\quad \phi(e_1) = \sum\limits_{k=1}^{n} b_{k}e_k,$$
 $$\phi(e_i) =b_{1}^{i-1}\sum\limits_{k=i}^{n} \frac {(-1)^{k+2-i}a_{1}^{k-i}} {(k-i)!}e_{k}, \quad 2 \leq i \leq n.$$

Any automorphism of the algebra $\mathfrak{s}^{4}_{n,1}(\alpha_3, \alpha_4, \dots, \alpha_{n-1})$ has the following form:
$$\phi(x) = x +  \sum\limits_{k=2}^{n-1}\Bigl(b_{k+1}+\sum\limits_{l=3}^{k-1} \alpha_{l}b_{k+2-l}\Bigr)e_k+a_{n}e_{n},$$
$$ \phi(e_1) = e_1+\sum\limits_{k=3}^{n} b_{k}e_{k},
\quad \phi(e_i) =\sum\limits_{k=i}^{n} c_{k+2-i}e_{k}, \quad 2 \leq i \leq n.$$
\end{prop}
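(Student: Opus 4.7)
The plan is to handle each of the four algebras in turn by the same strategy. In every case the nilradical $n_{n,1} = \langle e_1, e_2, \ldots, e_n\rangle$ is the unique maximal nilpotent ideal, hence characteristic, so $\phi(n_{n,1}) = n_{n,1}$. Moreover, each term of the lower central series of the nilradical is characteristic, pinning $\phi$ to preserve the flag $\langle e_3, \ldots, e_n\rangle \supset \langle e_4, \ldots, e_n\rangle \supset \cdots \supset \langle e_n\rangle$. This reduces the problem to determining $\phi(x)$, $\phi(e_1)$ and $\phi(e_2)$, since $\phi(e_i)$ for $i \geq 3$ is then forced by $\phi(e_{i+1}) = \phi([e_i, e_1]) = [\phi(e_i), \phi(e_1)]$.

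First I would extract the coefficient of $x$ in $\phi(x)$ using the eigenvalue structure of $\operatorname{ad}(x)$ on the nilradical. For $\mathfrak{s}^{1}_{n,1}(\beta)$, the eigenvalues of $\operatorname{ad}(x)$ on $e_1, e_2, \ldots, e_n$ are $1, \beta, \beta+1, \ldots, \beta+n-2$; requiring $\phi \circ \operatorname{ad}(x) = \operatorname{ad}(\phi(x)) \circ \phi$ forces the $x$-coefficient of $\phi(x)$ to be $1$ and separates the line $\langle e_1\rangle$ from $\langle e_2, \ldots, e_n\rangle$ (so the coefficient of $e_2$ in $\phi(e_1)$ vanishes, and similarly the coefficient of $e_1$ in $\phi(e_2)$). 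Analogous eigenvalue arguments apply to $\mathfrak{s}^{2}_{n,1}$, $\mathfrak{s}^{3}_{n,1}$, and $\mathfrak{s}^{4}_{n,1}$, with appropriate adjustments when some eigenspaces merge.

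Next I would parametrize $\phi(e_1) = b_1 e_1 + \sum_{k \geq 3} b_k e_k$ and $\phi(e_2) = c_2 e_2 + c_3 e_3 + \cdots$, set $\phi(x) = x + a_1 e_1 + \sum_{k\geq 2} d_k e_k$, and compute $\phi(e_i)$ for $i \geq 3$ inductively from $\phi(e_{i+1}) = [\phi(e_i), \phi(e_1)]$. The resulting coefficients display the pattern $b_1^{i-2}c_2 \sum_{k\geq i} (-1)^{k-i} a_1^{k-i}/(k-i)!\, e_k$, which is most transparent if one observes that the nilradical-parts of $\phi(e_i)$ for $i \geq 2$ coincide with the action of $\exp(-a_1 \operatorname{ad}(e_1))$ applied coordinate-wise. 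Matching the remaining relations $[e_i, x] = (i-2+\beta)e_i$ then determines the components $d_k$ of $\phi(x)$ in terms of the free parameters, yielding the stated closed form; the invertibility requirement forces $b_1, c_2 \neq 0$.

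The main obstacle is the bookkeeping for $\mathfrak{s}^{4}_{n,1}(\alpha_3, \ldots, \alpha_{n-1})$, where the bracket $[e_i, x] = e_i + \sum_{l=i+2}^{n} \alpha_{l+1-i} e_l$ produces, after applying $\phi$, a cascade of polynomial identities linking the $\alpha_l$, the $b_k$, and the components of $\phi(x)$. One must verify that these identities are consistent and that the $\alpha_l$ are invariants of the isomorphism class (which must be true since $\mathfrak{s}^{4}_{n,1}$ is parametrized precisely by them), so that the residual freedom is exactly the parameters $b_3, \ldots, b_n$ and $a_n$ in the claimed formula. A milder version of the same difficulty appears in $\mathfrak{s}^{3}_{n,1}$ because the extra term $e_2$ in $[e_1, x] = e_1 + e_2$ couples $\phi(e_1)$ and $\phi(e_2)$ more tightly than in $\mathfrak{s}^{1}_{n,1}$ and $\mathfrak{s}^{2}_{n,1}$, and explains the factor $(-1)^{k+2-i}a_1^{k-i}/(k-i)!$ in the formula for $\phi(e_i)$.
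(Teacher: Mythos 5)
Your outline is correct and is essentially the same argument the paper has in mind: the paper's entire proof is the sentence ``The proof follows directly from the definition of an automorphism,'' i.e.\ exactly the direct verification you describe (the nilradical and its lower central series are characteristic, $\phi(e_{i+1})=[\phi(e_i),\phi(e_1)]$ determines everything from $\phi(x),\phi(e_1),\phi(e_2)$, and the remaining brackets pin down the coefficients). Your parenthetical about ``eigenspaces merging'' is worth keeping in mind, since for special parameter values (e.g.\ $\beta=1$ in $\mathfrak{s}^{1}_{n,1}(\beta)$, where $e_1$ and $e_2$ share the $\operatorname{ad}(x)$-eigenvalue) the vanishing of the $e_2$-coefficient of $\phi(e_1)$ is not forced, an edge case the paper's statement glosses over as well.
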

\begin{proof}The proof follows directly from the definition of an automorphism.
\end{proof}

Now we give the description of $2$-cocycles with respect to $\theta$ of these solvable Lie algebras with one-dimensional abelian algebra $\mathbb{C}=\{e_{n+1}\}.$
Note that a basis of these algebras is $\{x, e_1, e_2, \dots, e_n\}$ and the nilradical is $n_{n,1} = \{e_1, e_2, \dots, e_n\}.$
Thus, we have that $$\theta(x)(e_{n+1})=\gamma e_{n+1}, \quad
\theta(e_i)(e_{n+1})=0, \quad 1 \leq i \leq n.$$

\begin{prop} \label{DN5.2} Any element $\psi \in Z^2(\mathfrak{s}^{1}_{n,1}(\beta),\theta, \mathbb{C})$ is formed by the following:

1) If $\gamma=1-n-\beta$ and $\beta=n+2-2k$ for some
$k \ (2\leq k \leq \Big \lfloor\frac{n+1} 2\Big\rfloor),$ then
$$\begin{array}{llll} \psi(x, e_1)= b_{1,2}, & \psi(x, e_2)= b_{1,3}, \\
\psi (e_1,e_{j-1})=b_{2,j}, & \psi(x, e_{j})=(j-1-n) b_{2,j}, & 3 \leq j \leq n, \\
\psi (e_1,e_{n})=b_{2,n+1}, & \psi (e_i,e_{2k+1-i})=(-1)^ib_{3,2k}, & 2 \leq i \leq k.\\
\end{array}$$


2) If $\gamma=1-n-\beta$ and $\beta\neq n+2-2k$ for any
$k\ (2\leq k \leq \Big \lfloor\frac{n+1} 2\Big\rfloor),$ then
$$\begin{array}{llll} \psi(x, e_1)= b_{1,2}, & \psi(x, e_2)= b_{1,3}, \\
\psi (e_1,e_{j-1})=b_{2,j}, & \psi(x, e_{j})=(j-1-n) b_{2,j}, & 3 \leq j \leq n,\\
\psi (e_1,e_{n})=b_{2,n+1}. &
\end{array}$$

3) If  $\gamma=2-n-2\beta$ and $\beta \neq 1$, then
 $$\begin{array}{llll}  \psi(x, e_1)= b_{1,2}, & \psi(x, e_2)= b_{1,3},\\
\psi(e_1, e_{j-1})= b_{2,j}, &  \psi(x, e_{j})=(j-n-\beta) b_{2,j},&  3 \leq j \leq n, \\
\psi(e_i,e_{n+2-i})=(-1)^ib_{3,n+1},&2 \leq i \leq\Big \lfloor\frac{n+1} 2\Big\rfloor.\\
\end{array}$$

4) If $\gamma\neq 1-n-\beta,$ $\gamma \neq2-n-2\beta$ and  $\beta=\frac {3-2k-\gamma} 2$ for some $k \ (2\leq k \leq \Big \lfloor \frac{n} 2 \Big\rfloor),$ then
$$\begin{array}{llll} \psi(x, e_1)= b_{1,2}, & \psi(x, e_2)= b_{1,3},\\
\psi(e_1, e_{j-1})= b_{2,j}, &  \psi(x, e_{j})=(j-3+\beta+\gamma) b_{2,j},&  3 \leq j \leq n,\\
\psi (e_i,e_{2k+1-i})=(-1)^ib_{3,2k}, & & 2 \leq i \leq k.
\end{array}$$

5) If $\gamma\neq 1-n-\beta,$ $\gamma \neq2-n-2\beta$ and $\beta\neq\frac {3-2k-\gamma} 2$ for any $k \ (2\leq k \leq  \Big \lfloor \frac{n} 2 \Big\rfloor),$ then
$$\begin{array}{llll} \psi(x, e_1)= b_{1,2}, & \psi(x, e_2)= b_{1,3},\\
\psi(e_1, e_{j-1})= b_{2,j}, &  \psi(x, e_{j})=(j-3+\beta+\gamma) b_{2,j},&  3 \leq j \leq n.
\end{array}$$

Note that in case of $n$ is even $b_{3,n+1}=0$.

\end{prop}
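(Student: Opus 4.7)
The plan is to parameterise an arbitrary $\psi \in Z^2(\mathfrak{s}^{1}_{n,1}(\beta),\theta,\mathbb{C})$ by setting $b_{1,j+1}=\psi(x,e_j)$ for $1\le j\le n$ and $b_{i+1,j+1}=\psi(e_i,e_j)$ for $1\le i<j\le n$, exactly as in the proof of Proposition~\ref{DN}, and then extract all constraints by evaluating the 2-cocycle identity \eqref{eq2.1} on basis triples. Since $\theta$ annihilates the nilradical and $\theta(x)$ acts on $\mathbb{C}=\langle e_{n+1}\rangle$ by the scalar $\gamma$, only three families of triples carry information: $(e_1,e_i,e_j)$ and $(x,e_i,e_j)$ with $2\le i<j\le n$, and $(x,e_1,e_j)$ with $2\le j\le n$; every other triple reduces to $0=0$ by anti-symmetry or because both the brackets and the $\theta$-terms vanish.

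The first family, for which $[e_i,e_j]=0$ and all $\theta$-terms vanish, collapses to $\psi(e_i,e_{j+1})+\psi(e_{i+1},e_j)=0$; iterating this together with anti-symmetry forces $\psi|_{n_{n,1}\times n_{n,1}}$ to be a linear combination of the cocycles listed in Proposition~\ref{prop3.1}. Thus the only possibly non-zero entries among the $b_{i+1,j+1}$ are the ``base row'' $\psi(e_i,e_1)$ and the diagonal cocycles $\psi(e_i,e_{2k+1-i})=(-1)^i c_k$ for $2\le i\le k$, $2\le k\le\lfloor(n+1)/2\rfloor$.

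The second family produces the scalar identity $(i+j-4+2\beta+\gamma)\psi(e_i,e_j)=0$, so the $k$-th diagonal cocycle $c_k$ is admissible only when $\gamma=3-2k-2\beta$, equivalently $\beta=(3-2k-\gamma)/2$. The third family yields a recursion of the form $\psi(x,e_{j+1})=(j-1+\beta+\gamma)\psi(e_1,e_j)$ for $2\le j\le n-1$, which expresses $\psi(x,e_j)$ for $3\le j\le n$ as a specific scalar multiple of $\psi(e_1,e_{j-1})=b_{2,j}$; at the boundary $j=n$, where $[e_1,e_n]=0$, the identity degenerates to $(n-1+\beta+\gamma)\psi(e_1,e_n)=0$, so $\psi(e_1,e_n)$ can be non-zero only when $\gamma=1-n-\beta$. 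A direct inspection confirms that $\psi(x,e_1)$ and $\psi(x,e_2)$ enter no cocycle equation and therefore remain free parameters.

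Merging these constraints produces the five stated cases by bookkeeping which of the exceptional relations among $(\beta,\gamma)$ actually hold: case 1 when both $\gamma=1-n-\beta$ and $\gamma=3-2k-2\beta$ are satisfied, which forces $\beta=n+2-2k$; case 2 when only the first holds; case 3 when only $\gamma=3-2k-2\beta$ with the maximal $k=(n+1)/2$ (an integer only for odd $n$, with $\beta\ne 1$ imposed to avoid collapsing into case 1); case 4 when only $\gamma=3-2k-2\beta$ with $2\le k\le\lfloor n/2\rfloor$; and case 5 the generic situation. The main obstacle is precisely this case-bookkeeping: one must verify that every exceptional $(\beta,\gamma,n,k)$-relation is placed in the correct case without overlap or omission, and that the degenerate sub-cases ($n$ even vs.\ odd, $\beta=1$, $k$ at its extreme) are handled correctly; the per-triple cocycle evaluations themselves are mechanical.
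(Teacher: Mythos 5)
Your proposal is correct and takes essentially the same route as the paper: evaluate the cocycle identity on the three informative families of basis triples, obtain the relations $\psi(e_{i+1},e_j)+\psi(e_i,e_{j+1})=0$, $(i+j-4+2\beta+\gamma)\psi(e_i,e_j)=0$, $\psi(x,e_{j+1})=(j-1+\beta+\gamma)\psi(e_1,e_j)$ and $(n-1+\beta+\gamma)\psi(e_1,e_n)=0$, and then do the case bookkeeping on $(\beta,\gamma)$. The only cosmetic difference is that you invoke Proposition~\ref{prop3.1} to dispose of the restriction to the nilradical where the paper rederives the relations $b_{i,j}=-b_{i+1,j-1}$, $b_{i,i+2}=0$, $b_{i,n+1}=0$ inline; note also that your coefficient, which amounts to $\psi(x,e_j)=(j-2+\beta+\gamma)\psi(e_1,e_{j-1})$, is the one consistent with cases 1)--3) of the statement, while cases 4)--5) as printed carry an off-by-one slip in that coefficient.
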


\begin{proof} For any $\psi \in Z^2(\mathfrak{s}^{1}_{n,1}(\beta),\theta, \mathbb{C})$
denote by $$\psi(x, e_j)= b_{1,j+1},  \quad 1\leq j\leq n, \quad \psi(e_i, e_j)= b_{i+1,j+1}, \quad 1\leq i, j\leq n.$$

From the condition of a 2-cocycle by straightforward computation we have the following restrictions

$$\begin{cases}
b_{i,j}=-b_{i+1,j-1}, & 3\leq i\leq n-2,\quad i+3\leq j\leq n+1,\\
b_{1,j}=(j-3+\beta+\gamma)b_{2,j-1},& 4\leq j\leq n+1,\\
(i+j-6+2\beta+\gamma) b_{i,j}=0,& 3\leq i \leq n,\quad i+1\leq j \leq n+1,\\
b_{i,i+2}=0, & 3\leq i\leq n-1,\\
b_{i,n+1}=0,& 4\leq i\leq n,\\
(n-1+\beta+\gamma)b_{2,n+1}=0.
\end{cases}$$

Since $b_{i,j}=-b_{i+1,j-1}$ for $3\leq i\leq n-2,$ $i+3\leq j\leq n+1,$ we have that

\begin{itemize}
\item If $n$ is even, then
$b_{3,n+1}=-b_{4,n}=\ldots=b_{\frac{n}{2}+2,\frac{n}{2}+2}=0,$
\item If $n$ is odd, then
$b_{3,n+1}=-b_{4,n}=\ldots=b_{\frac{n+5}{2},\frac{n+3}{2}}=-b_{\frac{n+3}{2},\frac{n+5}{2}}.$
\end{itemize}

Thus, in the case of $n$ is even, we have that  $b_{i,n+4-i}=0$ for $3 \leq i \leq \frac n 2 +2.$ Moreover,
we additionally have following subcases:
\begin{enumerate}
\item If $\gamma=1-n-\beta$ and $\beta=n+2-2k$ for some
$k \ (2\leq k \leq \Big \lfloor\frac{n+1} 2\Big\rfloor),$  then we get
$$\begin{cases}
b_{1,i}=(i-2-n)b_{2,i-1},& 4\leq i \leq n+1,\\[1mm]
b_{i,j}=0,& 3\leq i\leq n, \quad i+1\leq j\leq n+1, \quad i+j \neq 2k+3,\\
b_{i+1,2k+2-i}=(-1)^ib_{3,2k}, & 2 \leq i \leq k.
\end{cases}$$

\item If $\gamma=1-n-\beta$ and $\beta\neq n+2-2k$ for any
$k\ (2\leq k \leq \Big \lfloor\frac{n+1} 2\Big\rfloor),$  then we have
$$\begin{cases}
b_{1,i}=(i-2-n)b_{2,i-1},& 4\leq i \leq n+1,\\[1mm]
b_{i,j}=0,& 3\leq i\leq n, \quad i+1\leq j\leq n+1.
\end{cases}$$

\item If
$\gamma=2-n-2\beta$ and $\beta \neq 1$, then
$$\begin{cases}
b_{1,i}=(i-1-n-\beta)b_{2,i-1}, & 4\leq j\leq n+1,\\[1mm]
b_{2,n+1}=0,\\[1mm]
b_{i,j}=0, & 3\leq i\leq n, \quad i+1\leq j\leq n+1, \quad i+j \neq n+4.
\end{cases}$$


\item If $\gamma \neq 1-n-\beta,$ $\gamma \neq 2-n-2\beta$ and
 $\beta=\frac {3-2k-\gamma} 2$ for some $k \ (2\leq k \leq \Big \lfloor \frac{n} 2 \Big\rfloor),$ then we have
 $$\begin{cases}
b_{1,j}=(i-3+\beta+\gamma) b_{2,j-1} , & 4\leq j\leq n+1,\\[1mm]
b_{2,n+1}=0,\\[1mm]
b_{i,j}=0, & 3\leq i\leq n,  \quad i+1\leq j\leq n+1, \quad i+j \neq 2k+3,\\[1mm]
b_{i+1,2k+2-i}=(-1)^ib_{3,2k}, & 2 \leq i \leq k.
\end{cases}$$

\item If $\gamma \neq 1-n-\beta,$ $\gamma \neq 2-n-2\beta$ and
 $\beta\neq \frac {3-2k-\gamma} 2$ for any $k \ (2\leq k \leq \Big \lfloor \frac{n} 2 \Big\rfloor),$ then we have
 $$\begin{cases}
b_{1,j}=(i-3+\beta+\gamma) b_{2,j-1} , & 4\leq j\leq n+1,\\[1mm]
b_{2,n+1}=0,\\[1mm]
b_{i,j}=0, & 3\leq i\leq n,  \quad i+1\leq j\leq n+1.
\end{cases}$$
\end{enumerate}

\end{proof}

Now we determine the space of 2-coboundaries with respect to $\theta$. Putting
$$f(x)=c_0e_{n+1},  \quad f(e_i)=c_{i}e_{n+1}, \ 1\leq i\leq n$$
for any automorphism $\phi \in \operatorname{Aut}(\mathfrak{s}^{1}_{n,1}(\beta))$ considering
$$df(y,z)=f\big([y,z]\big)+\theta\big(\phi(z)\big)(f(y))-\theta\big(\phi(y)\big)(f(z))$$ we have
$$\begin{array}{llll} df(x,e_1)=-(1+\gamma) c_1, \\
df(x,e_i)=-(i-2+\beta+\gamma)c_{i}, & 2\leq i\leq n, \\
df(e_1,e_i)=-c_{i+1}, & 2\leq i\leq n-1. \end{array}$$

Thus, we get
$$\dim B^2(\mathfrak{s}^{1}_{n,1}(\beta),\theta,\mathbb{C})  = \left\{\begin{array}{llll} n & \text{if} & \gamma \neq -1,\\
n-1 & \text{if} & \gamma = -1. \end{array}\right.$$

It is not difficult to see that to get a non-split extension of the solvable Lie algebra $\mathfrak{s}^{1}_{n,1}(\beta)$ it is enough to consider the cases
 $\gamma=1-n-\beta$ and $\gamma=2-n-2\beta,$ $n$ is odd.
In these cases we have the followings


\begin{enumerate}
\item If $\gamma =1-n-\beta$ and $\beta=n+2-2k$ for some $k \ (2\leq k\leq \Big\lfloor\frac{n+1}{2}\Big\rfloor),$ then
 $$H^2(\mathfrak{s}^{1}_{n,1}(\beta),\theta,\mathbb{C}) =\Big\langle  [\Delta_{n+1,2}], \Big[\sum_{i=2}^{k}(-1)^{i}\Delta_{i+1,2k+2-i}\Big] \Big\rangle.$$

\item If $\gamma =1-n-\beta$ and $\beta \neq n+2-2k$ for any  $k \ (2\leq k\leq \Big\lfloor\frac{n+1}{2}\Big\rfloor),$ then
 $$H^2(\mathfrak{s}^{1}_{n,1}(\beta),\theta,\mathbb{C}) =\Big\langle  [\Delta_{n+1,2}]\Big\rangle.$$

 \item If $\gamma=2-n-2\beta,$ $\beta \neq -1,$ $\gamma \neq - 1,$ $n$ is odd, then
  $$H^2(\mathfrak{s}^{1}_{n,1}(\beta),\theta, \mathbb{C}) =\Big\langle  \Big[\sum_{i=3}^{\lfloor\frac{n+1}{2}\rfloor}(-1)^{i}\Delta_{i,n+4-i}\Big]\Big\rangle .$$

 \item If $\gamma =- 1,$ $\beta = \frac {3-n} 2,$ $n$ is odd, then
  $$H^2(\mathfrak{s}^{1}_{n,1}(\beta),\theta, \mathbb{C}) =\Big\langle [\Delta_{1,2}],  \Big[\sum_{i=3}^{\lfloor\frac{n+1}{2}\rfloor}(-1)^{i}\Delta_{i,n+4-i}\Big]\Big\rangle .$$

 \end{enumerate}

Here for bilinear form $\Delta_{i,j}$ we use the denotation similarly in Section 3, respect to the basis  $\{x, e_{1},e_{2}, \ldots, e_{n}\}.$

\begin{thm} \label{thm5.9} Let $\widetilde{L}$ be a one-dimensional extension of the solvable Lie algebra $\mathfrak{s}^{1}_{n,1}(\beta),$  then $\widetilde{L}$ is isomorphic to one of the following algebras
$$\mathfrak{s}^{1}_{n+1,1}(\beta), \quad \tau_{n+1,1}^1(\beta), \quad \tau_{n+1,1}^2 $$
and
$$ \widetilde{L_k}(2\leq k\leq \Big\lfloor\frac{n}{2}\Big\rfloor):\begin{cases}
[e_i, e_1] = e_{i+1}, &   2 \leq i \leq n, \\
[e_i,e_{2k+1-i}]=(-1)^ie_{n+1},& 2\leq i\leq k,\\
[e_1, x]=e_1, \\
[e_i, x]  = (n+i-2k)e_k,  & 2 \leq i \leq n+1.
\end{cases}$$

%
%
%
%


\end{thm}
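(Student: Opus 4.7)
The plan is to combine the cohomology computation already in hand with an orbit analysis under $\operatorname{Aut}(\mathfrak{s}^{1}_{n,1}(\beta))$, exactly following the template laid out by Theorem \ref{thm1}. Since only the four cases enumerated after Proposition \ref{DN5.2} give a nontrivial $H^2(\mathfrak{s}^{1}_{n,1}(\beta),\theta,\mathbb{C})$ compatible with the kernel/center condition, it suffices to treat these four cases separately and, in each one, compute the orbits of the induced $\operatorname{Aut}(\mathfrak{s}^{1}_{n,1}(\beta))\times\operatorname{Aut}(\mathbb{C})$-action on $H^2$, then read off the extension algebra from each orbit representative.

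First I would fix the basis $\{[\Delta_{n+1,2}], [\sum_{i}(-1)^i\Delta_{i+1,2k+2-i}]\}$ (or its analogues for the three other cases) of $H^2$, and compute the transformation rules for the coefficients $\alpha_1$ (coefficient of $[\Delta_{n+1,2}]$) and $\alpha_k$ (coefficient of the symplectic-type cocycle) under a general automorphism $\phi$. This is done by pulling back $\Delta_{i,j}$ through the explicit formulas for $\phi(x), \phi(e_i)$ already written down, and then re-expressing the result modulo the coboundaries $df$. The calculation is formally parallel to the even/odd-$n$ orbit analysis performed for $n_{n,1}$ in the proof of Theorem \ref{thm3.4}, so I would reuse the same type of normalizations: pick the scaling parameter $b_1$ to kill one ratio of $\alpha_1/\alpha_k$, and pick the $a_i$'s to kill the off-diagonal entries one at a time.

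In each of the four $H^2$ cases I then list the orbit representatives. In case (1), $\gamma=1-n-\beta$ and $\beta=n+2-2k$, two orbits should survive: $\langle\nabla_1\rangle$ giving $\tau_{n+1,1}^1(\beta)$ (when $\alpha_k=0$) and $\langle\nabla_1+\nabla_k\rangle$ giving $\widetilde{L_k}$ (when $\alpha_k\neq 0$, rescaled to $1$; note that the other side $\alpha_1=0$ is excluded by $\operatorname{Ann}(\psi^0)\cap Z(n_{n,1})=0$, which forces the coefficient on $\Delta_{n+1,2}$ to be nonzero). In case (2) only the orbit $\langle\nabla_1\rangle$ remains, reconstructing $\tau_{n+1,1}^1(\beta)$. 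In case (3) the unique cohomology class yields the extension $\tau_{n+1,1}^2$ after reconstructing brackets from the new product $[e_i,e_{n+2-i}]=(-1)^i e_{n+1}$ together with the derivation data $(\alpha,\beta)=(2-n-2\beta)$. In the remaining subcase $\gamma=-1$, $\beta=(3-n)/2$, a short computation shows that the two-dimensional $H^2$ collapses to one orbit (also giving $\tau_{n+1,1}^2$) because the corresponding cohomology class is absorbed by the split part $\mathfrak{s}^{1}_{n+1,1}(\beta)$.

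Finally, for each orbit representative I would write out the extension algebra $\widetilde{L}=\mathfrak{s}^{1}_{n,1}(\beta)\oplus\langle e_{n+1}\rangle$ with the new brackets $[e_i,e_j]=\psi(e_i,e_j)$ and $[e_{n+1},x]=-\gamma e_{n+1}$, and match with the algebras $\mathfrak{s}^{1}_{n+1,1}(\beta)$, $\tau_{n+1,1}^1(\beta)$, $\tau_{n+1,1}^2$, $\widetilde{L_k}$ listed in the statement (the trivial case $\psi=0$ giving the split extension $\mathfrak{s}^{1}_{n+1,1}(\beta)$). The main obstacle is bookkeeping: one must carefully verify that distinct orbits in different $\theta$-strata give non-isomorphic extensions (cocycles with respect to different $\theta$ can never be related by the $\operatorname{Aut} L\times \operatorname{Aut}\mathbb{C}$-action, because $\theta$ is an invariant of the isomorphism class via $\theta'=\beta\circ(\theta\circ\alpha)\circ\beta^{-1}$), and that within case (1) the representative $\widetilde{L_k}$ really does depend on $k$ and not merely on $\beta=n+2-2k$; this can be checked by comparing the dimensions of the derived ideals of $\widetilde{L_k}$ as $k$ varies, or by locating the bracket $[e_i,e_{2k+1-i}]$ among the nilradical relations.
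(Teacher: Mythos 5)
Your overall strategy (stratify by the four $H^2$-cases of Proposition \ref{DN5.2}, compute the $\operatorname{Aut}(\mathfrak{s}^{1}_{n,1}(\beta))$-orbits, and read off the extension from each representative) is exactly the paper's, but several of your orbit-to-algebra identifications are wrong, and one whole branch of the case analysis is missing. First, the class $\nabla_1=[\Delta_{n+1,2}]$ corresponds to the cocycle $\psi(e_n,e_1)=e_{n+1}$, which lengthens the filiform chain: the resulting nilradical is $n_{n+1,1}$ and the orbit $\langle\nabla_1\rangle$ yields $\mathfrak{s}^{1}_{n+1,1}(\beta)$, \emph{not} $\tau^1_{n+1,1}(\beta)$ as you claim in cases (1) and (2). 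Relatedly, your parenthetical that ``the trivial case $\psi=0$'' gives $\mathfrak{s}^{1}_{n+1,1}(\beta)$ is backwards: $\psi=0$ is the split extension, which is excluded by the condition $\operatorname{Ann}(\psi^0)\cap Z(N)=0$; the algebra $\mathfrak{s}^{1}_{n+1,1}(\beta)$ arises precisely from the nontrivial cocycle $\Delta_{n+1,2}$. Second, in case (3) ($\gamma=2-n-2\beta$, $\beta\neq 1$) the unique class $\bigl[\sum_i(-1)^i\Delta_{i,n+4-i}\bigr]$ produces the $Q_{n+1}$-type nilradical with a \emph{diagonal} outer derivation, hence $\tau^1_{n+1,1}(\beta)$, not $\tau^2_{n+1,1}$; the algebra $\tau^2_{n+1,1}$ requires the additional cocycle component on $(x,e_1)$ that creates the off-diagonal term $[e_1,x]=e_1+e_{n+1}$, and it occurs only in your case (4).

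Third, in case (4) ($\gamma=-1$, $\beta=\frac{3-n}{2}$) the two-dimensional $H^2$ does \emph{not} collapse to one orbit: the action is $\delta_1^*=\delta_1 b_1$, $\delta_2^*=\delta_2 c_2^2 b_1^{n-1}$, so $\delta_1=0$ and $\delta_1\neq 0$ give two distinct orbits, yielding respectively $\tau^1_{n+1,1}(\frac{3-n}{2})$ and $\tau^2_{n+1,1}$; your claimed ``absorption by the split part'' does not happen since $\Delta_{1,2}$ is not a coboundary when $\gamma=-1$. Finally, you miss the subcase $k=\frac{n+1}{2}$ of case (1) (i.e.\ $\beta=1$, $n$ odd), where the transformation law changes to $\delta_1^*=c_2b_1^{n-2}(\delta_1 b_1+\delta_{\frac{n+1}{2}}b_{\frac{n+1}{2}})$ and the annihilator condition no longer forces $\delta_1\neq 0$; there the extra orbit $\langle\nabla_{\frac{n+1}{2}}\rangle$ appears (one can shift $\delta_1$ to zero using $b_{\frac{n+1}{2}}$) and produces $\tau^1_{n+1,1}(1)$, which your enumeration cannot recover. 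Without these corrections the stated list of algebras is not actually obtained from your orbit analysis.
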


\begin{proof}

(1) Let $\gamma=1-n-\beta$ and $\beta=n+2-2k$ for some  $2\leq k\leq \Big\lfloor\frac{n+1}{2}\Big\rfloor.$ Denote by
$$ \nabla_1=[\Delta_{n+1,2}], \quad \nabla_k=\Bigl[{\sum_{i=2}^{k}(-1)^{i}\Delta_{i+1,2k+2-i}}\Big]. $$

Then any $\psi=\langle \delta_1\nabla_1+\delta_k\nabla_k\rangle,$ acts on the subspace $\langle \delta_1^*\nabla_1+\delta_k^*\nabla_k\rangle$ by action of the automorphism group as
$$\begin{array}{llll} \delta_1^*=\delta_1c_2b_{1}^{n-1}, & \delta_k^*=\delta_k c_2^2b_1^{2k-3} & \text{if} & k < \frac {n+1} 2, \\[1mm] \delta_1^*=c_2b_1^{n-2}(\delta_1b_1+\delta_{\frac {n+1} 2}b_{\frac {n+1} 2}), & \delta_k^*=\delta_k c_2^2b_1^{n-2} & \text{if} & k = \frac {n+1} 2. \end{array}$$

Note that in case of $k < \frac {n+1} 2$ we have $\operatorname{Ann}(\psi^0)\cap Z(n_{n,1}) =0$ if and only if  $\delta_1\neq 0.$ Thus, we have
\begin{itemize}
\item If $\delta_k=0$, then
$\delta_k^{*}=0$  and
we have the representative $\langle\nabla_1\rangle$ and obtain the algebra $\mathfrak{s}^{1}_{n+1,1}(\beta)$ for $\beta=n+2-2k.$
\item If $\delta_k\neq 0$, then choosing by $b_1=1, c_2=\frac{\delta_1}{\delta_k}$, we have the representative $\langle\nabla_1+\nabla_k\rangle$ obtain the algebra
    $\widetilde{L_k},$ $2\leq k \leq  \Big\lfloor\frac{n}{2}\Big\rfloor.$
\end{itemize}

In case of $k = \frac {n+1} 2$ (where $n$ is odd)
we have that $\beta =1$ and
\begin{itemize}
\item If $\delta_{\frac {n+1} 2}=0$, then
$\delta_{\frac {n+1} 2}^{*}=0$  and
we have the representative $\langle\nabla_1\rangle$ and obtain the algebra $\mathfrak{s}^{1}_{n+1,1}(\beta)$ for $\beta=1.$
\item If $\delta_{\frac {n+1} 2}\neq 0$, then
choosing by $b_{\frac {n+1} 2} = - \frac{\delta_1b_1} {\delta_{\frac {n+1} 2}},$
we have the representative $\langle\nabla_{\frac {n+1} 2}\rangle$ and obtain the algebra $\tau_{n+1,1}^1(\beta)$ for $ \beta =1.$
\end{itemize}

(2) Let  $\gamma=1-n-\beta$ and $\beta \neq n+2-2k$ for any  $2\leq k\leq \Big\lfloor\frac{n+1}{2}\Big\rfloor.$
Then we have only one orbit $\langle [\Delta_{n+1,2}]\rangle$ and obtain the algebra $\mathfrak{s}^{1}_{n+1,1}(\beta)$ for $\beta \neq n+2-2k .$

(3) Let $\gamma=2-n-2\beta$, $\beta \neq 1$ and $\gamma \neq -1$. Then we have the orbit
$$\Big\langle \Big[ \sum_{i=3}^{\lfloor\frac{n+1}{2}\rfloor}(-1)^{i}\Delta_{i,n+4-i}\Big] \Big\rangle$$
and obtain the algebra $\tau_{n+1,1}^1(\beta)$ for $\beta \neq 1.$

(4) Let $\gamma =- 1,$ $\beta = \frac {3-n} 2$. Then denote by
$$ \nabla_1=[\Delta_{2,1}], \quad \nabla_2=\Big[{\sum_{i=3}^{\lfloor\frac{n+1}{2}\rfloor}(-1)^{i}\Delta_{i,n+4-i}}\Big]. $$
 Then any $\psi=\langle \delta_1\nabla_1+\delta_2\nabla_2\rangle,$ acts on the subspace $\langle \delta_1^*\nabla_1+\delta_2^*\nabla_2\rangle$ by action of the automorphism group as
$$\delta_1^*=\delta_1b_{1}, \quad  \delta_2^*=\delta_2 c_2^2b_1^{n-1}.$$

If $\delta_1 =0,$ then we have the orbit $\langle\nabla_2 \rangle$ and obtain the algebra  $\tau_{n+1,1}^1(\beta)$ for $\beta = \frac {3-n} 2.$

If $\delta_1 \neq 0,$ then we have the orbit $\langle\nabla_1 + \nabla_2 \rangle$ and obtain the algebra  $\tau_{n+1,1}^2.$

\end{proof}

Similar to the above, we give the description of $Z^2(\mathfrak{s}^{2}_{n,1},\theta, \mathbb{C}),$ i.e., $2$-cocycles  with respect to $\theta$ with one-dimensional abelian algebra $\mathbb{C}=\{e_{n+1}\}.$

\begin{prop} \label{DN5.4} Any element $\psi \in Z^2(\mathfrak{s}^{2}_{n,1},\theta, \mathbb{C})$ is formed by the following:

1) If $\gamma=-1,$ then
$$\begin{array}{llll} \psi(x, e_1)= b_{1,2}, & \psi(x, e_2)= b_{1,3}, &
\psi(e_1, e_j)= b_{2,j+1}, & 2 \leq j \leq n. \\
\end{array}$$

  2) If  $\gamma=-2$, then
 $$\begin{array}{llll}  \psi(x, e_1)= b_{1,2}, & \psi(x, e_2)= b_{1,3},\\
\psi(e_1, e_{j-1})= b_{2,j}, &  \psi(x, e_{j})=- b_{2,j},&  3 \leq j \leq n, \\
\psi(e_i,e_{2k+1-i})=(-1)^ib_{3,2k},& 2\leq k\leq \Big \lfloor\frac{n+1} 2\Big\rfloor, &  2 \leq i \leq k.\\
\end{array}$$
Note that in case of $n$ is even $b_{3,n+1}=0$.

3) If $\gamma\neq -1$ and $\gamma \neq-2$, then
$$\begin{array}{llll} \psi(x, e_1)= b_{1,2}, & \psi(x, e_2)= b_{1,3},\\
\psi(e_1, e_{j-1})= b_{2,j}, &  \psi(x, e_{j})=(1+\gamma) b_{2,j},&  3 \leq j \leq n.\\
\end{array}$$
\end{prop}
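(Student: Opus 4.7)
The plan is to derive all constraints on $\psi \in Z^2(\mathfrak{s}^{2}_{n,1}, \theta, \mathbb{C})$ by evaluating the $\theta$-twisted cocycle identity \eqref{eq2.1} on every triple drawn from the basis $\{x, e_1, e_2, \ldots, e_n\}$ and then reading off the three forms of $\psi$ according to the value of $\gamma$. Throughout I set $b_{1, j+1} = \psi(x, e_j)$ and $b_{i+1, j+1} = \psi(e_i, e_j)$ with the antisymmetry $b_{i,j} = -b_{j,i}$; recall that $\theta(x)$ acts as multiplication by $\gamma$ on $\mathbb{C} e_{n+1}$ while $\theta(e_i) = 0$, so the twisting contributes only when $x$ appears among the arguments.

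First I would handle triples entirely inside $N = n_{n,1}$: since $\theta|_N = 0$ the identity reduces to the usual 2-cocycle condition on $N$, recovering the system already analysed in Proposition \ref{prop3.1}. The key relation from $(e_i, e_j, e_1)$ with $2 \leq i, j \leq n-1$ is $b_{i+1, j+2} = b_{j+1, i+2}$; the boundary triple $(e_i, e_n, e_1)$ forces $b_{n+1, i+2} = 0$, and antisymmetry yields $b_{i, i+2} = 0$ for $3 \leq i \leq n-1$ and $b_{i, n+1} = 0$ for $4 \leq i \leq n$. Thus $\psi|_{N\times N}$ is a linear combination of the standard basis of $Z^2(n_{n,1}, \mathbb{C})$: the $\Delta_{i,1}$ pieces (parametrised by the free $b_{2, i+1} = \psi(e_1, e_i)$) plus the higher pieces $\sum_{i=2}^k (-1)^i \Delta_{i, 2k+1-i}$ (parametrised by $b_{3, 2k} = \psi(e_2, e_{2k-1})$).

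Next I would bring $x$ into the identity. The triple $(x, e_i, e_j)$ with $i, j \geq 2$ has $[e_i, e_j] = 0$ and $[x, e_i] = -e_i$, so \eqref{eq2.1} collapses, via antisymmetry, to $(2 + \gamma)\, b_{i+1, j+1} = 0$. The triple $(x, e_i, e_1)$ with $2 \leq i \leq n-1$ (using $[e_i, e_1] = e_{i+1}$, $[x, e_i] = -e_i$, $[e_1, x] = 0$) produces the recursion $b_{1, i+2} = (1 + \gamma)\, b_{2, i+1}$, and at $i = n$ the vanishing of $[e_n, e_1]$ degenerates it to $(1 + \gamma)\, b_{2, n+1} = 0$. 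Every remaining triple is a cyclic permutation of one of these or is trivially zero, so these are the complete constraints.

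The three cases of the statement now drop out by case analysis on $\gamma$. If $\gamma = -1$, then $(2+\gamma) = 1$ kills the higher entries $b_{i+1, j+1}$ with $i, j \geq 2$, the recursion gives $b_{1, i+2} = 0$ for $3 \leq i+2 \leq n+1$, and $(1+\gamma) b_{2, n+1} = 0$ is vacuous, leaving exactly $b_{1,2}, b_{1,3}$ and $b_{2, j+1}$ for $2 \leq j \leq n$ free, as in part (1). If $\gamma = -2$, the vanishing $(2+\gamma) = 0$ lets the higher pieces survive, $(1+\gamma) = -1$ forces $b_{1, i+2} = -b_{2, i+1}$ and $b_{2, n+1} = 0$, matching part (2); the special vanishing $b_{3, n+1} = 0$ for $n$ even is obtained by iterating the relation $b_{k, n+4-k} = -b_{k+1, n+3-k}$ (the combination of $b_{i+1, j+2} = b_{j+1, i+2}$ with antisymmetry) from $k = 3$ up to $k = n/2 + 2$, where $b_{n/2+2, n/2+2} = 0$ by antisymmetry. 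If $\gamma \notin \{-1, -2\}$, both $(2+\gamma)$ and $(1+\gamma)$ are nonzero, so $b_{i+1, j+1} = 0$ for $i, j \geq 2$, $b_{2, n+1} = 0$, and $b_{1, i+2} = (1+\gamma)\, b_{2, i+1}$, exactly as in part (3). The main obstacle is the bookkeeping when $\gamma = -2$: one must check that the antidiagonal recursion on $N$ really pairs the values into the pattern $(-1)^i b_{3, 2k}$ without spurious freedom, and in particular that the above termination argument closes off $b_{3, n+1}$ to zero in the even case.
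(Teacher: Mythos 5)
Your derivation is correct and matches the paper's approach: the paper proves this proposition by the same direct evaluation of the twisted cocycle identity on basis triples (it simply writes ``similar to Proposition \ref{DN5.2}''), and your key relations $(2+\gamma)\,b_{i+1,j+1}=0$, $\psi(x,e_{i+1})=(1+\gamma)\psi(e_1,e_i)$, $(1+\gamma)\,b_{2,n+1}=0$, together with the pure-$n_{n,1}$ antidiagonal analysis from Proposition \ref{prop3.1}, are exactly the constraints that yield the three cases. Your termination argument for $b_{3,n+1}=0$ when $n$ is even (the antidiagonal recursion reaching the vanishing diagonal entry) is also the intended one.
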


\begin{proof} The proof is similar to the Proposition \ref{DN5.2}.

\end{proof}

Now we determine the space of 2-coboundaries with respect to $\theta$. Putting
$$f(x)=c_0e_{n+1},  \quad f(e_i)=c_{i}e_{n+1}, \ 1\leq i\leq n$$
for any automorphism $\phi \in \operatorname{Aut}(\mathfrak{s}^{2}_{n,1})$ considering
$$df(y,z)=f\big([y,z]\big)+\theta\big(\phi(z)\big)(f(y))-\theta\big(\phi(y)\big)(f(z))$$ we have
$$\begin{array}{llll} df(x,e_1)=-\gamma c_1, \\
df(x,e_i)=-(1+\gamma)c_{i}, & 2\leq i\leq n, \\
df(e_1,e_i)=-c_{i+1}, & 2\leq i\leq n-1. \end{array}$$

It is not difficult to see that in cases of
 $\gamma \neq -1,$ $\gamma \neq-2,$ and $\gamma =-2,$ $n$ is even,
  we have
 $\operatorname{Ann}(\psi^0)\cap Z(n_{n,1})=\{e_n\} \neq 0.$ Therefore, to get a non-split extension of the solvable Lie algebra $\mathfrak{s}^{2}_{n,1}$ it is enough to consider the cases
 $\gamma=-1$ and $\gamma=-2,$ $n$ is odd.

In the first case we have $\dim Z^2(\mathfrak{s}^{2}_{n,1},\theta, \mathbb{C})=n+1,$ $\dim B^2(\mathfrak{s}^{2}_{n,1},\theta, \mathbb{C})=n-1$
which implies $\dim H^2(\mathfrak{s}^{2}_{n,1},\theta, \mathbb{C}) =2$
 and a basis of $H^2(\mathfrak{s}^{2}_{n,1},\theta, \mathbb{C})$ is formed by the following cocycles
 $$H^2(\mathfrak{s}^{2}_{n,1},\theta,\mathbb{C}) =\langle [\Delta_{3,1}], [\Delta_{n+1,2}] \rangle, \quad \gamma=-1.$$

 In  second case we have $\dim Z^2(\mathfrak{s}^{2}_{n,1},\theta, \mathbb{C})=\frac{3n-1}{2},$ $\dim B^2(\mathfrak{s}^{2}_{n,1},\theta, \mathbb{C})=n$
which implies $\dim H^2(\mathfrak{s}^{2}_{n,1},\theta, \mathbb{C}) =\frac{n-1}{2}$
 and a basis of $H^2(\mathfrak{s}^{2}_{n,1},\theta, \mathbb{C})$ is formed by the following cocycles

 $$H^2(\mathfrak{s}^{2}_{n,1},\theta, \mathbb{C}) = \Big\langle  \Big[\sum_{i=2}^{k}(-1)^{i}\Delta_{i+1,2k+2-i}\Big],\quad 2\leq k\leq \frac{n+1}{2}\Big\rangle, \quad \gamma = -2.$$

\begin{thm} \label{thm5.6} Let $\widetilde{L}$ be a one-dimensional extension of the solvable Lie algebra $\mathfrak{s}^{2}_{n,1},$  then $\widetilde{L}$ is isomorphic to the following algebras
$$\mathfrak{s}^{2}_{n+1,1}, \quad  \mathfrak{s}^{4}_{n+1,1}(0, 0, \dots, 0, 1), \quad \tau_{n+1,1}^3(0, 0, \dots, 0).$$
%
\end{thm}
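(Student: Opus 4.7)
The plan is to follow the strategy of Theorem~\ref{thm5.9}: rule out the values of $\gamma$ that give split extensions, enumerate the orbits of $\operatorname{Aut}(\mathfrak{s}^{2}_{n,1})$ on $H^{2}(\mathfrak{s}^{2}_{n,1},\theta,\mathbb{C})$ for the remaining values, and match each orbit representative with one of the algebras in the conclusion. Combining Proposition~\ref{DN5.4} with the coboundary formulas stated above, for every $\gamma \notin\{-1,-2\}$ and also for $\gamma=-2$ with $n$ even, one has $e_{n}\in \operatorname{Ann}(\psi^{0})\cap Z(n_{n,1})$, so the extension is split. Only $\gamma=-1$ and the pair $\gamma=-2$, $n$ odd remain.

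For $\gamma=-1$, I will write a general class as $\psi=\delta_{1}[\Delta_{3,1}]+\delta_{2}[\Delta_{n+1,2}]$. Only the second summand lies on $N\times N$, so the annihilator condition forces $\delta_{2}\neq 0$. A direct computation of $(\phi\cdot\psi)(y,z)=\psi(\phi y,\phi z)$ with $\phi\in\operatorname{Aut}(\mathfrak{s}^{2}_{n,1})$ shows that the leading scalars $b_{1}$ and $c_{2}$ rescale $\delta_{1}$ and $\delta_{2}$ multiplicatively modulo coboundaries, while the remaining automorphism parameters only contribute coboundary shifts. Two orbit representatives emerge: $\langle[\Delta_{n+1,2}]\rangle$ when $\delta_{1}=0$, and $\langle[\Delta_{3,1}]+[\Delta_{n+1,2}]\rangle$ when $\delta_{1}\neq 0$. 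Translating back to brackets and using $[e_{n+1},x]=-\theta(x)e_{n+1}=e_{n+1}$, the first representative supplements the relations of $\mathfrak{s}^{2}_{n,1}$ with $[e_{n},e_{1}]=e_{n+1}$, completing the filiform pattern to $\mathfrak{s}^{2}_{n+1,1}$; the second additionally produces $[e_{2},x]=e_{2}+e_{n+1}$, which is precisely the shape of $\mathfrak{s}^{4}_{n+1,1}(0,\ldots,0,1)$ (the parameter $\alpha_{n}=1$ contributes exactly the $i=2$, $l=n+1$ term and nothing else).

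For $\gamma=-2$ with $n$ odd, the basis of $H^{2}$ consists of $\nabla_{k}=\bigl[\sum_{i=2}^{k}(-1)^{i}\Delta_{i+1,2k+2-i}\bigr]$ for $2\le k\le (n+1)/2$. Among these, only $\nabla_{(n+1)/2}$ couples with $e_{n}$, so the annihilator condition forces $\delta_{(n+1)/2}\neq 0$. The ensuing orbit reduction is structurally identical to the $n$ odd, $\alpha_{(n+1)/2}\neq 0$ case of Theorem~\ref{thm3.4}: choosing $b_{1}$ and $c_{2}$ normalizes $\delta_{(n+1)/2}$ to $1$, after which an iterative choice of the coefficients $c_{2t}$ kills the lower $\delta_{k}$'s, leaving the unique representative $\langle\nabla_{(n+1)/2}\rangle$. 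Reading off the resulting brackets $[e_{i},e_{n+2-i}]=(-1)^{i}e_{n+1}$ for $2\le i\le (n+1)/2$ together with $[e_{n+1},x]=2e_{n+1}$, one recovers exactly $\tau_{n+1,1}^{3}(0,0,\ldots,0)$.

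The main technical obstacle will be verifying the cascade reduction in the second case: one must check that the automorphism group of $\mathfrak{s}^{2}_{n,1}$, which is smaller than $\operatorname{Aut}(n_{n,1})$, still has enough freedom to kill the subdominant cocycle coefficients. Since the parameters $c_{2t}$ appearing in the triangular substitution of Theorem~\ref{thm3.4} are unconstrained by the $x$-action on the nilradical, the inductive elimination carries over verbatim; the compatibility condition with $\theta$ is automatic because $\phi$ fixes $x$ modulo $n_{n,1}$, so $\theta$ is preserved by the $\operatorname{Aut}(\mathfrak{s}^{2}_{n,1})\times\operatorname{Aut}(\mathbb{C})$ action.
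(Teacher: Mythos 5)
Your proposal is correct and follows essentially the same route as the paper: eliminate all $\gamma\notin\{-1,-2\}$ (and $\gamma=-2$ with $n$ even) via the annihilator condition on $e_n$, then compute the two orbits $\langle[\Delta_{n+1,2}]\rangle$ and $\langle[\Delta_{3,1}]+[\Delta_{n+1,2}]\rangle$ for $\gamma=-1$ and the single orbit $\langle\nabla_{(n+1)/2}\rangle$ for $\gamma=-2$, $n$ odd, matching them to $\mathfrak{s}^{2}_{n+1,1}$, $\mathfrak{s}^{4}_{n+1,1}(0,\dots,0,1)$ and $\tau_{n+1,1}^{3}(0,\dots,0)$ respectively. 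The only cosmetic difference is that you invoke the cascade elimination of Theorem \ref{thm3.4} by analogy (with a justification that the $c_{2t}$ are unconstrained), whereas the paper writes the triangular substitution out explicitly.
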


\begin{proof}
Let \textbf{ $\gamma=-1$}. Denote by $\nabla_1=[\Delta_{3,1}],\nabla_2=[\Delta_{n+1,2}],$ then 
 for any $\psi= \delta_1\nabla_1+\delta_2\nabla_2,$ we have the action of the automorphism group
on the subspace $\langle \psi\rangle$ as $\langle \delta_1^*\nabla_1+\delta_2^*\nabla_2\rangle,$
where $$\delta_1^*=\delta_1c_2,\quad \delta_2^*=\delta_2 b_1^{n-1}c_2.$$

It is easy to see that $\operatorname{Ann}(\psi^0)\cap Z(n_{n,1})=0$ if and only if  $\delta_2\neq 0.$
Let us consider the following cases:
\begin{itemize}
\item If $\delta_1=0$, then
$\delta_1^{*}=0$  and
we have the representative $\langle\nabla_2\rangle$ and obtain the algebra $\mathfrak{s}^{2}_{n+1,1}.$

\item If $\delta_1\neq 0$, then choosing $ b_1=\sqrt[n-1]{\frac{\delta_1}{\delta_2}}, c_2=1$, we have the representative $\langle\nabla_1+\nabla_2\rangle$ and obtain the algebra $\mathfrak{s}^{4}_{n+1,1}(0, 0, \dots, 0, 1).$
\end{itemize}


 \textbf{$\gamma=-2$.} Let us denote $$\nabla_{k-1}=\Bigl[\sum_{i=2}^{k}(-1)^{i}\Delta_{i+1,2k+2-i}\Bigr],\quad 2\leq k\leq \frac{n+1}{2}.$$

Then for any
$\psi=\langle\delta_1\nabla_1+\delta_2\nabla_2+\ldots+\delta_{\frac{n-1}{2}}\nabla_{\frac{n-1}{2}}\rangle$, we have the action of the automorphism group
on the subspace $\langle \psi\rangle$ as
$\langle\delta_1^*\nabla_1+\delta_2^*\nabla_2+\ldots+\delta_{\frac{n-1}{2}}^*\nabla_{\frac{n-1}{2}}\rangle$
with restriction
$$\delta_{k}^{*}=b_1^{2k-1}\left(\sum\limits_{i=1}^{\frac {n-1} 2 +1-k}(-1)^{i+1}\delta_{i-1+k}c_{i+1}^2+2\sum\limits_{j=k+1}^{\frac{n-1}{2}}\sum\limits_{i=1}^{j-k}(-1)^{i+1}\delta_j c_{i+1}c_{2j-i-(2k-3)}\right), \quad 1 \leq k \leq \frac {n-1} 2.$$

Note that $\operatorname{Ann}(\psi^0)\cap Z(n_{n,1})=0$ if and only if  $\delta_{\frac{n-1}{2}}\neq 0$.
 Then choosing
 $$b_1=1,\quad c_2=\sqrt{\frac{1}{\delta_{\frac{n-1}{2}}}},$$
$$c_{2t}=-\frac{1}{2\delta_{\frac{n-1}{2}}c_2}\left(\sum\limits_{i=1}^{t}(-1)^{i+1}\delta_{\frac{n-1}{2}-t+i}c_{i+1}^2+2\sum\limits_{j=\frac{n+3}{2}-t}^{\frac{n-3}{2}}
  \sum\limits_{i=2}^{j+t-\frac{n-1}{2}}(-1)^{i}\delta_{j}c_{i}c_{2(j+t)-n+3-i}-2\delta_{\frac{n-1}{2}}\sum\limits_{i=2}^{t-1}(-1)^{i}c_{i+1}c_{2t+1-i}\right)$$
  where $2\leq t \leq\frac{n-3}{2}$, we have the representative $\langle\nabla_{\frac{n-1}{2}}\rangle$ and obtain  the algebra $\tau_{n+1,1}^3(0,0,\dots,0).$

\end{proof}

Similarly to the previous algebras we consider extensions of the algebras $\mathfrak{s}^{3}_{n,1}$ and $\mathfrak{s}^{4}_{n,1}(\alpha_3, \alpha_4, \dots, \alpha_{n-1}).$

\begin{prop} \label{DN5.6} Any element $\psi \in Z^2(\mathfrak{s}^{3}_{n,1},\theta, \mathbb{C})$ is formed by the following:

1) If $\gamma=-n,$ then
$$\begin{array}{llll} \psi(x, e_1)= b_{1,2}, & \psi(x, e_2)= b_{1,3}, & \psi(e_1, e_{n})= b_{2,n+1}, \\
 \psi(e_1, e_{j-1})= b_{2,j}, & \psi(x, e_j)= (j-1-n)b_{2,j},&3\leq j\leq n.
 \end{array}$$

2) If $\gamma=1-2k$ for some $k \ (2\leq k< \Big\lfloor\frac{n+1}{2}\Big\rfloor)$, then
$$\begin{array}{llll} \psi(x, e_1)= b_{1,2}, & \psi(x, e_2)= b_{1,3},\\
 \psi(e_1, e_j)= b_{2,j}, & 3\leq j\leq n,\\
  \psi(x, e_{2k})= b_{3,2k}, &  \psi(x, e_j)= (j-2k)b_{2,j},&3\leq j\leq n, \quad j \neq 2k ,\\   \psi (e_i,e_{2k+1-i})=(-1)^ib_{3,2k}, & 2 \leq i \leq k.
\end{array}$$

3) If $\gamma\neq 1-2k$, for any  $k \ ( 2\leq k\leq  \Big\lfloor\frac{n+1}{2}\Big\rfloor)$,
$$\begin{array}{llll} \psi(x, e_1)= b_{1,2}, & \psi(x, e_2)= b_{1,3},\\
 \psi(e_1, e_{j-1})= b_{2,j}, & \psi(x, e_j)= (j-1+\gamma)b_{2,j},&3\leq j\leq n.
\end{array}$$
\end{prop}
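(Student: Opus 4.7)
The plan is to mirror the strategy already carried out in Propositions \ref{DN5.2} and \ref{DN5.4}, adapting it to the extra term $e_{2}$ appearing in $[e_{1},x]=e_{1}+e_{2}$. For any $\psi\in Z^{2}(\mathfrak{s}^{3}_{n,1},\theta,\mathbb{C})$ I parameterize
\[
\psi(x,e_{j})=b_{1,j+1},\qquad \psi(e_{i},e_{j})=b_{i+1,j+1}\quad (1\leq i,j\leq n),
\]
using antisymmetry $b_{i+1,j+1}=-b_{j+1,i+1}$. Since $\theta(e_{i})=0$, the cocycle identity \eqref{eq2.1} on a triple $(e_{i},e_{j},e_{k})\subset n_{n,1}$ reduces to the ordinary $n_{n,1}$-cocycle identity, so Proposition \ref{prop3.1} already gives the nilradical relations
\[
b_{i,j}=-b_{i+1,j-1},\qquad 3\leq i\leq n-2,\ i+3\leq j\leq n+1,
\]
together with $b_{i,i+2}=0$ for $3\leq i\leq n-1$ and $b_{i,n+1}=0$ for $4\leq i\leq n$.

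Next I exploit the triples $(x,e_{i},e_{j})$. The nontrivial brackets involving $x$ are $[x,e_{1}]=-e_{1}-e_{2}$ and $[x,e_{i}]=-(i-1)e_{i}$ for $i\geq 2$, so the cocycle identity becomes
\[
\psi(x,[e_{i},e_{j}])+\psi(e_{j},[x,e_{i}])+\psi(e_{i},[e_{j},x])+\gamma\,\psi(e_{i},e_{j})=0.
\]
Applied to $(x,e_{1},e_{j})$ for $2\leq j\leq n-1$ this yields recursions of the shape $b_{1,j+2}=(j+\gamma)\,b_{2,j+1}$ plus a correction coming from the extra $e_{2}$ summand in $[x,e_{1}]$, namely an additional term $b_{3,j+1}$; applied to $(x,e_{i},e_{j})$ with $i\geq 2$ it forces $(i+j-3+\gamma)\,b_{i+1,j+1}=0$ for the inner entries and $(n-1+\gamma)\,b_{2,n+1}=0$ on the boundary. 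These are precisely the relations determining in which cases the parameters labelled $b_{2,j}$, $b_{3,2k}$, and $b_{2,n+1}$ are free.

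The final step is the case analysis on $\gamma$. The coefficient $(i+j-3+\gamma)$ vanishes on the symplectic pairs $(i,j)$ with $i+j=2k+1$ precisely when $\gamma=1-2k$, which is the only mechanism producing a new free parameter $b_{3,2k}$; this gives case (2) and, after Proposition \ref{prop3.1} forces the sign pattern $b_{i+1,2k+2-i}=(-1)^{i}b_{3,2k}$, also the term $(j-2k)b_{2,j}$ (with the distinguished $j=2k$ entry carrying $b_{3,2k}$ instead of the generic value). The boundary equation $(n-1+\gamma)\,b_{2,n+1}=0$ singles out $\gamma=-n$, giving case (1) in which $b_{2,n+1}$ is free. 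For all other $\gamma$ the generic formulas of case (3) apply. I would finish by verifying that the parameters listed in each case indeed satisfy every cocycle identity, which is a direct substitution.

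The only real obstacle is bookkeeping: one must check that the extra $e_{2}$-contribution in $[e_{1},x]$, which couples $b_{1,j+2}$ to $b_{3,j+1}$, is absorbed consistently when $b_{3,j+1}=0$ (true except on the exceptional diagonal $i+j=2k+1$) so that the apparent mixing disappears and the clean formulas $b_{1,j+1}=(j-1+\gamma)b_{2,j}$ of case (3) emerge. Once that cancellation is tracked, the case split and the cocycle lists follow by inspection.
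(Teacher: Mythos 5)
Your overall route --- restrict the cocycle identity to the nilradical to import the relations underlying Proposition \ref{prop3.1}, then exploit the triples containing $x$ and split into cases on $\gamma$ --- is exactly the computation the paper leaves implicit (it carries it out for Proposition \ref{DN5.2} and omits the analogous proofs), so the approach matches. However, two of your key intermediate formulas are off by one, and taken literally they would derail the case analysis you then assert. For $(x,e_i,e_j)$ with $i,j\ge 2$ the weights in $\mathfrak{s}^{3}_{n,1}$ are $i-1$ and $j-1$, so the identity gives $(i+j-2+\gamma)\,b_{i+1,j+1}=0$, not $(i+j-3+\gamma)\,b_{i+1,j+1}=0$; with your coefficient the symplectic pairs $i+j=2k+1$ would survive at $\gamma=2-2k$, contradicting both the statement and your own next sentence, where you correctly name $\gamma=1-2k$.

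The more consequential slip is the boundary identity on $(x,e_1,e_n)$: it reads $(n+\gamma)\,b_{2,n+1}+b_{3,n+1}=0$, since the extra $e_2$ in $[e_1,x]=e_1+e_2$ contributes $\psi(e_2,e_n)=b_{3,n+1}$ here as well, and the coefficient is $n+\gamma$, not $n-1+\gamma$. Your equation $(n-1+\gamma)\,b_{2,n+1}=0$ would make $b_{2,n+1}$ free at $\gamma=1-n$ and force $b_{2,n+1}=0$ at $\gamma=-n$, the opposite of what case (1) requires and of what you then claim. Moreover the omitted term $b_{3,n+1}$ is not a harmless correction: when $\gamma=-n$ with $n$ odd, i.e.\ $\gamma=1-2k$ with $2k+1=n+2$, it is precisely this relation that forces $b_{3,n+1}=0$ and kills the top symplectic family, which is why case (1) lists no terms $\psi(e_i,e_{n+2-i})$ and why $k=\frac{n+1}{2}$ is excluded from the range in case (2); as written, your sketch would leave that family alive. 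Once these two relations are corrected, the rest of your argument (the coupling $b_{1,j+2}=(j+\gamma)b_{2,j+1}+b_{3,j+1}$, the observation that $b_{3,j+1}\neq 0$ only for $j=2k-1$, and the resulting trichotomy) goes through and reproduces the proposition by the same mechanical verification the paper performs for Proposition \ref{DN5.2}.
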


\begin{prop} Any element $\psi \in Z^2(\mathfrak{s}^{4}_{n,1}(\alpha_3, \alpha_4, \dots, \alpha_{n-1}),\theta, \mathbb{C})$ is formed by the following:

1) If  $\gamma =-1$, then
$$\begin{array}{llll} \psi(x, e_1)= b_{1,2}, & \psi(x, e_2)= b_{1,3}, &  \psi(e_1, e_{n-1})= b_{2,n},&\psi(e_1, e_{n})= b_{2,n+1}, \\
 \psi(e_1, e_{j-1})= b_{2,j}, & \psi(x, e_j)= \sum\limits_{k=j+1}^{n}\alpha_{k+2-j}b_{2,k+1},&3\leq j\leq n-1.
 \end{array}$$

 2) If  $\gamma =-2$ and $\alpha_{2t-1} =0$ for any $t \ (2 \leq t \leq \Big\lfloor\frac{n-1}{2}\Big\rfloor)$, then
$$\begin{array}{llll} \psi(x, e_1)= b_{1,2}, & \psi(x, e_2)= b_{1,3}, \\
 \psi(e_1, e_{j-1})= b_{2,j}, & \psi(x, e_j)=-b_{2,j}+ \sum\limits_{k=j+1}^{n-1}\alpha_{k+2-j}b_{2,k+1},&3\leq j\leq n,\\
 \psi(e_i,e_{n+2-i})=(-1)^ib_{3,n+1},& 2 \leq i \leq\Big \lfloor\frac{n+3} 2\Big\rfloor.
 \end{array}$$
  Note that in case of $n$ is even $b_{3,n+1}=0.$

 3) If  $\gamma =-2$ and $\alpha_{2t-1} \neq 0$ for some  $t \ ( 2 \leq t \leq \Big\lfloor\frac{n-1}{2}\Big\rfloor)$, then
$$\begin{array}{llll} \psi(x, e_1)= b_{1,2}, & \psi(x, e_2)= b_{1,3}, \\
 \psi(e_1, e_{j-1})= b_{2,j}, & \psi(x, e_j)=-b_{2,j}+ \sum\limits_{k=j+1}^{n-1}\alpha_{k+2-j}b_{2,k+1},&3\leq j\leq n.
 \end{array}$$

4) If  $\gamma \neq-1$ and   $\gamma \neq-2$,then
$$\begin{array}{llll} \psi(x, e_1)= b_{1,2}, & \psi(x, e_2)= b_{1,3}, \\
 \psi(e_1, e_{j-1})= b_{2,j}, & \psi(x, e_j)=(1+\gamma)b_{2,j}+ \sum\limits_{k=j+1}^{n-1}\alpha_{k+2-j}b_{2,j+1},&3\leq j\leq n.\\
 \end{array}$$

\end{prop}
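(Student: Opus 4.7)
The plan is to follow the scheme used in Propositions \ref{DN5.2}, \ref{DN5.4} and \ref{DN5.6}: parametrise a candidate cocycle by its values $\psi(x,e_j)=b_{1,j+1}$ and $\psi(e_i,e_j)=b_{i+1,j+1}$, then evaluate the 2-cocycle identity
$$\psi(a,[b,c])+\psi(c,[a,b])+\psi(b,[c,a])+\theta(a)\psi(b,c)+\theta(c)\psi(a,b)+\theta(b)\psi(c,a)=0$$
on the three systematic families of triples $(e_i,e_j,e_1)$, $(e_1,e_j,x)$ and $(e_i,e_j,x)$ with $2\le i<j\le n$. Since $\theta$ vanishes on $n_{n,1}$ and $\theta(x)=\gamma\,\mathrm{id}$, only the triples containing $x$ contribute $\theta$-terms, and they always enter through the single summand $\gamma\,\psi(\cdot,\cdot)$.

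The first family carries no $\theta$-contribution and yields $\psi(e_i,e_{j+1})=\psi(e_j,e_{i+1})$ for $2\le i<j\le n-1$; iterating this along anti-diagonals gives the familiar relations $b_{p,q}=-b_{p+1,q-1}$ for $3\le p\le n-2$, $p+3\le q\le n+1$, exactly as in Proposition \ref{prop3.1}. The second family, using $[e_1,x]=0$ and $[e_j,x]=e_j+\sum_{l=j+2}^n\alpha_{l+1-j}e_l$, produces for $2\le j\le n-1$
$$b_{1,j+2}=(1+\gamma)b_{2,j+1}+\sum_{l=j+2}^{n}\alpha_{l+1-j}\,b_{2,l+1},$$
while the boundary case $j=n$ collapses to $(1+\gamma)b_{2,n+1}=0$. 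The third family delivers
$$(2+\gamma)b_{i+1,j+1}+\sum_{l=j+2}^{n}\alpha_{l+1-j}\,b_{i+1,l+1}-\sum_{l=i+2}^{n}\alpha_{l+1-i}\,b_{j+1,l+1}=0,$$
which, combined with the anti-diagonal relations above, determines every $b_{i+1,j+1}$ with $i\ge 2$.

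The case split now falls out on $\gamma$. For $\gamma\neq -1,-2$ the coefficient $(1+\gamma)$ forces $b_{2,n+1}=0$ and $(2+\gamma)\neq 0$ kills every $b_{i+1,j+1}$ with $i\ge 2$, producing case~(4). For $\gamma=-1$ the constraint on $b_{2,n+1}$ disappears, the formula for $b_{1,j+2}$ loses its $b_{2,j+1}$ term, and $(2+\gamma)=1$ still annihilates the $b_{i+1,j+1}$ with $i\ge 2$, giving case~(1). For $\gamma=-2$ the leading coefficient vanishes and the third family becomes a purely $\alpha$-driven homogeneous system on the anti-diagonal unknowns $b_{p,n+4-p}$, whose only possible solution is the one-parameter family $b_{p,n+4-p}=(-1)^{p-1}b_{3,n+1}$ (trivially zero when $n$ is even). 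The main obstacle is the sign-parity analysis at this last step: one must show that this family actually satisfies the $\alpha$-system precisely when every $\alpha_{2t-1}$ vanishes, since the $(-1)^{p-1}$ alternation pairs with the shifts $\alpha_{l+1-j}$ and $\alpha_{l+1-i}$ so that terms with even-indexed $\alpha$'s cancel in pairs while any odd-indexed $\alpha_{2t-1}\neq 0$ produces an uncancelled contribution that forces $b_{3,n+1}=0$. This is exactly what separates case (2) from case (3).
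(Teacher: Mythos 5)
Your overall scheme --- parametrising $\psi$ by the values $b_{i,j}$ and evaluating the cocycle identity on the three families of triples $(e_i,e_j,e_1)$, $(x,e_1,e_j)$ and $(x,e_i,e_j)$ --- is exactly the computation the paper intends: the proposition is stated without proof, on the model of Proposition \ref{DN5.2}. The three families of relations you derive are correct, including the boundary identity $(1+\gamma)b_{2,n+1}=0$ and the extra relations $\psi(e_n,e_m)=0$ for $m\ge 3$ implicit in your anti-diagonal analysis, and your conclusions in cases (1) and (4) (and the induction killing all $b_{i+1,j+1}$, $i\ge 2$, when $2+\gamma\neq 0$) are sound.

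The gap is in the final step, the case $\gamma=-2$. There the third family does reduce to a homogeneous $\alpha$-driven system, but its unknowns are \emph{all} entries $b_{p,q}$ with $p,q\ge 3$, which the first family organises into one free parameter $c_s$ per \emph{odd} anti-diagonal $p+q=s$ with $7\le s\le n+4$ (even anti-diagonals vanish since they contain $b_{m,m+2}=\psi(e_{m-1},e_{m+1})=\psi(e_m,e_m)=0$, and $s\ge n+5$ dies because $\psi(e_n,e_m)=0$); nothing a priori confines attention to $s=n+4$. Your own observation that the even-indexed $\alpha$'s cancel in pairs along anti-diagonals means precisely that they impose \emph{no} condition. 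Hence when all $\alpha_{2t-1}=0$ the system is vacuous and \emph{every} odd anti-diagonal survives --- consistent with Proposition \ref{DN5.4}(2), to which case (2) specialises when all $\alpha_i=0$, but not with the single anti-diagonal you record. When some $\alpha_{2t-1}\neq 0$ the surviving conditions read $\sum_{t}\alpha_{2t-1}c_{i+j+2t}=0$ for odd $i+j\ge 5$; these do force $c_{n+4}=b_{3,n+1}=0$ as you claim, but they never involve $c_7=\psi(e_2,e_3)$ and in general leave the low anti-diagonals undetermined. So the assertion that ``the only possible solution is the one-parameter family $b_{p,n+4-p}=(-1)^{p-1}b_{3,n+1}$'' is not established and is in fact false as stated: you would either need to produce additional cocycle conditions (the identity yields none beyond your three families) or note explicitly that cases (2) and (3) of the statement omit these lower anti-diagonal cocycles.
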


It is not difficult to obtain that 2-coboundaries with respect to $\theta$ of this algebras are

$$B^2(\mathfrak{s}^{3}_{n,1},\theta, \mathbb{C}) = \left\{\begin{array}{llll} df(x,e_1)=-(1+\gamma )c_1-c_2, \\
df(x,e_i)=-(i-1+\gamma)c_{i}, & 2\leq i\leq n, \\
df(e_1,e_i)=-c_{i+1}, & 2\leq i\leq n-1. \end{array}\right.$$

$$B^2(\mathfrak{s}^{4}_{n,1}(\alpha_3, \alpha_4, \dots, \alpha_{n-1}),\theta, \mathbb{C}) = \left\{\begin{array}{llll} df(x,e_1)=-\gamma c_1, \\
df(x,e_i)=-(1+\gamma)c_{i}-\sum\limits_{l=i+2}^{n}\alpha_{l+1-i}c_l, & 2\leq i\leq n, \\
df(e_1,e_i)=-c_{i+1}, & 2\leq i\leq n-1. \end{array}\right.$$

\begin{thm} \label{thm5.3} Any non-split extension of the solvable Lie algebra $\mathfrak{s}^{3}_{n,1}$  is isomorphic to the algebra
$\mathfrak{s}^{3}_{n+1,1}.$
\end{thm}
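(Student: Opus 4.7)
The plan is to mirror the methodology used for $\mathfrak{s}^{1}_{n,1}(\beta)$ and $\mathfrak{s}^{2}_{n,1}$ in the previous theorems, splitting the argument according to the three cases of Proposition \ref{DN5.6} for $Z^2(\mathfrak{s}^3_{n,1},\theta,\mathbb{C})$, and in each case checking when the condition $\operatorname{Ann}(\psi^0)\cap Z(n_{n,1})=0$ can hold. Since $Z(n_{n,1})=\langle e_n\rangle$, this condition reduces to requiring that some $\psi^0(e_n,\cdot)$ be nonzero.

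First I would eliminate cases 2 and 3. In case 3 ($\gamma\neq 1-2k$ and $\gamma\neq -n$), reading off the coboundary formulas shows that $df(e_1,e_j)=-c_{j+1}$ together with $df(x,e_j)=-(j-1+\gamma)c_j$ give a coboundary map of full rank whenever $\gamma\neq -1$, so $\dim H^2=0$; when $\gamma=-1$ the only surviving class is represented by a cocycle with $\psi^0\equiv 0$, hence $\operatorname{Ann}(\psi^0)=n_{n,1}$ and the condition fails. In case 2 ($\gamma=1-2k$, $2\leq k<\lfloor(n+1)/2\rfloor$), an analogous count gives $\dim H^2=1$, generated by the class with $\psi(e_i,e_{2k+1-i})=(-1)^i e_{n+1}$; since the indices $\{2,\ldots,2k-1\}$ involved satisfy $2k-1<n$, the vector $e_n$ does not occur in any pair, so $e_n\in\operatorname{Ann}(\psi^0)$ for every representative. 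Thus cases 2 and 3 yield only split extensions.

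It remains to treat case 1, $\gamma=-n$. Here the cocycle parameters are $b_{1,2},b_{1,3},b_{2,3},\ldots,b_{2,n+1}$, while the coboundary relations $b_{1,2}=(n-1)c_1-c_2$, $b_{1,3}=(n-1)c_2$, $b_{2,j}=-c_j$ ($3\leq j\leq n$) form an injective map $\mathbb{C}^n\to Z^2$; consequently $\dim B^2=n$ and
\[
H^2(\mathfrak{s}^3_{n,1},\theta,\mathbb{C})=\langle[\Delta_{2,n+1}]\rangle,
\]
where $\Delta_{2,n+1}$ corresponds to $\psi(e_1,e_n)=e_{n+1}$. For this cocycle $\psi^0(e_1,e_n)\neq 0$, so $e_n\notin\operatorname{Ann}(\psi^0)$ as required. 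Using the automorphism group of $\mathfrak{s}^3_{n,1}$ one checks that the action on the subspace $\langle\delta\nabla\rangle$ is by the nonzero scalar $b_1^{n}$, so the orbit structure collapses to a single non-zero orbit. Writing out the new bracket relations from $L(\psi,\theta)$ gives $[e_n,e_1]=\psi(e_n,e_1)=-e_{n+1}$ and $[e_{n+1},x]=-\theta(x)e_{n+1}=ne_{n+1}$, which after rescaling $e_{n+1}\mapsto -e_{n+1}$ matches the presentation of $\mathfrak{s}^3_{n+1,1}$ exactly.

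The main obstacle I anticipate is the bookkeeping in cases 2 and 3: one must be careful that no coboundary adjustment can move the representative into one for which $e_n$ escapes the annihilator, which relies on observing that the defining relation $[e_1,x]=e_1+e_2$ of $\mathfrak{s}^3_{n,1}$ (unlike in $\mathfrak{s}^{1}_{n,1}(\beta)$) rigidly ties $\gamma=-n$ to the appearance of the class $[\Delta_{2,n+1}]$ in cohomology.
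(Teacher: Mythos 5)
Your proposal is correct and follows essentially the same route as the paper: reduce to $\gamma=-n$ via the criterion $\operatorname{Ann}(\psi^0)\cap Z(n_{n,1})=0$, compute $\dim Z^2=n+1$, $\dim B^2=n$, so $H^2$ is one-dimensional with representative $\psi(e_1,e_n)=e_{n+1}$ (the paper's $\psi(e_n,e_1)=e_{n+1}$ up to sign), and read off the brackets to identify $\mathfrak{s}^{3}_{n+1,1}$. The only difference is that you spell out the elimination of the $\gamma\neq-n$ cases (via the $H^2$ computations and the observation that $e_n$ never occurs in $\psi^0$), which the paper simply asserts.
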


\begin{proof}
Note that in cases of
$\gamma \neq -n,$ we have $\operatorname{Ann}(\psi^0)\cap Z(n_{n,1})=\{e_n\} \neq 0.$ Therefore, to get a non-split extension of the solvable Lie algebra $\mathfrak{s}^{3}_{n,1}$ it is enough to consider the case of $\gamma=-n.$ In this  case we have $\dim Z^2(\mathfrak{s}^{3}_{n,1},\theta, \mathbb{C})=n+1,$ $\dim B^2(S_{n+1,3},\theta, \mathbb{C})=n$
which implies $\dim H^2(\mathfrak{s}^{3}_{n,1},\theta, \mathbb{C}) =1$
 and a basis of $H^2(\mathfrak{s}^{3}_{n,1},\theta, \mathbb{C})$ is formed by the following cocycle
 $$H^2(\mathfrak{s}^{3}_{n,1},\theta,\mathbb{C}) =\langle[\psi]\rangle, \quad \psi(e_n, e_1) = e_{n+1}, \quad \gamma=-n.$$

New products of the extension algebra $\mathfrak{s}^{3}_{n,1}\oplus\{e_{n+1}\}$ is
    $$[e_n,e_1]=\psi(e_n,e_1) =  e_{n+1},\quad [e_{n+1}, x]=-\theta(x)e_{n+1}=ne_{n+1} $$
and we obtain the algebra $\mathfrak{s}^{3}_{n+1,1}.$
\end{proof}

\begin{thm} One-dimensional extensions of the solvable Lie algebra $S_{n+1,4}(\alpha_3,\alpha_4,\dots,\alpha_{n-1})$  are the algebras
$\mathfrak{s}^{4}_{n+1,1}(\alpha_3, \alpha_4, \dots, \alpha_{n-1}, \alpha_{n})$
and
$\tau_{n+1,1}^3(\alpha_4, \alpha_6, \dots \alpha_{n-1}).$

\end{thm}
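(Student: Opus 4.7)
The plan is to mirror the strategy of Theorems \ref{thm5.9} and \ref{thm5.6}: determine which representations $\theta$ give rise to non-split extensions, compute $H^2(\mathfrak{s}^{4}_{n,1}(\alpha_3,\dots,\alpha_{n-1}),\theta,\mathbb{C})$ for those, and then reduce representatives modulo the induced action of $\operatorname{Aut}(\mathfrak{s}^{4}_{n,1})$ on the Grassmannian of one-dimensional subspaces of $H^2$. Since $Z(n_{n,1})=\langle e_n\rangle$, one first checks, case by case in $\gamma$, whether $\operatorname{Ann}(\psi^{0})\cap Z(n_{n,1})=0$ is achievable. Comparing the cocycle lists in the preceding proposition with the coboundary description, the extension is forced to be split unless $\gamma=-1$, or $\gamma=-2$ with $n$ odd and all odd-indexed parameters $\alpha_{2t-1}$ vanishing; in these two surviving cases the center condition can be realized by a non-trivial class.

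In the case $\gamma=-1$ a direct comparison gives $\dim Z^{2}-\dim B^{2}=1$, with the unique non-trivial class represented by $\psi(e_1,e_n)=e_{n+1}$ together with the induced brackets $\psi(x,e_j)=\sum_{k=j+1}^{n}\alpha_{k+2-j}b_{2,k+1}$ coming from the cocycle relations. Grafting this class onto $\mathfrak{s}^{4}_{n,1}$ produces the new non-zero products
\[
[e_n,e_1]=e_{n+1},\qquad [e_{n+1},x]=e_{n+1},
\]
which, after renaming $b_{2,n+1}=:\alpha_n$ to absorb the remaining free parameter, fit exactly into the $\mathfrak{s}^{4}_{n+1,1}(\alpha_3,\dots,\alpha_{n-1},\alpha_n)$ format. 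Here no normalization by $\operatorname{Aut}(\mathfrak{s}^4_{n,1})$ is needed beyond this identification, since the action on the single surviving class is by a non-zero scalar.

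In the case $\gamma=-2$ with $n$ odd and $\alpha_{2t-1}=0$, the non-trivial classes come from the family
\[
\nabla_k=\Bigl[\sum_{i=2}^{k}(-1)^{i}\Delta_{i+1,2k+2-i}\Bigr],\qquad 2\le k\le \tfrac{n+1}{2},
\]
exactly as in the $\mathfrak{s}^{2}_{n,1}$ analysis, but now twisted by the $\alpha_{2k}$-terms coming from $\psi(x,e_j)=-b_{2,j}+\sum_{k>j}\alpha_{k+2-j}b_{2,k+1}$. Writing a general class $\psi=\sum\delta_k\nabla_k$, one computes the $\operatorname{Aut}(\mathfrak{s}^{4}_{n,1})$-action on the coefficients; the formulas are formally the same as those obtained in the proof of Theorem \ref{thm5.6}, with the top coefficient transforming as $\delta^{*}_{(n-1)/2}=\delta_{(n-1)/2}\,c_{2}^{2}$ up to a unit. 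The non-degeneracy $\operatorname{Ann}(\psi^{0})\cap Z(n_{n,1})=0$ forces $\delta_{(n-1)/2}\neq 0$, so one can successively solve a triangular system for $c_{4},c_{6},\dots,c_{n-1}$ that kills $\delta_{2},\delta_{3},\dots,\delta_{(n-3)/2}$ and normalizes $\delta_{(n-1)/2}=1$; this step is the main technical obstacle, since the recursion inherits the $\alpha_{2k}$-perturbations and must be checked to remain upper-triangular. The resulting representative yields precisely the bracket relations of $\tau_{n+1,1}^{3}(\alpha_4,\alpha_6,\dots,\alpha_{n-1})$.

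Finally, one verifies that the two orbits produced above lie in disjoint branches (different values of $\gamma$, and different nilradicals of the resulting extension), so the algebras $\mathfrak{s}^{4}_{n+1,1}(\alpha_3,\dots,\alpha_n)$ and $\tau_{n+1,1}^{3}(\alpha_4,\dots,\alpha_{n-1})$ are pairwise non-isomorphic and exhaust all one-dimensional non-split extensions of $\mathfrak{s}^{4}_{n,1}(\alpha_3,\dots,\alpha_{n-1})$.
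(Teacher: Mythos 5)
Your overall strategy (identify the surviving weights $\gamma$, compute $H^2$, reduce under $\operatorname{Aut}$) matches the paper, and your treatment of which cases survive ($\gamma=-1$; $\gamma=-2$ with $n$ odd and $\alpha_{2t-1}=0$) is correct. However, there is a genuine gap in the case $\gamma=-1$. You assert that $\dim Z^2-\dim B^2=1$ with the unique class $\psi(e_1,e_n)=e_{n+1}$, and then try to recover the new parameter $\alpha_n$ by ``renaming $b_{2,n+1}=:\alpha_n$.'' This cannot work: $b_{2,n+1}$ is precisely the coefficient that must be non-zero for $\operatorname{Ann}(\psi^0)\cap Z(n_{n,1})=0$ and it is rescaled by the automorphism action, so it carries no modulus. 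In fact $\dim H^2(\mathfrak{s}^{4}_{n,1},\theta,\mathbb{C})=2$ for $\gamma=-1$ (here $c_2$ contributes nothing to $B^2$, so $\dim B^2=n-1$ while $\dim Z^2=n+1$); the second class is $[\Delta_{3,1}]$, i.e.\ the component $\psi(x,e_2)$, and since both classes transform by the same factor $c_2$, the ratio $\delta_1/\delta_2$ of their coefficients is an invariant of the orbit. It is exactly this ratio that becomes the new structure constant $\alpha_n$ in $\mathfrak{s}^{4}_{n+1,1}(\alpha_3,\dots,\alpha_{n-1},\alpha_n)$ (appearing in $[e_2,x]=e_2+\dots+\alpha_n e_{n+1}$). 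With your one-dimensional $H^2$ you would obtain only a single extension instead of the one-parameter family, so the classification statement would come out wrong.

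A secondary inaccuracy occurs in the case $\gamma=-2$: you import the whole family $\nabla_k$, $2\le k\le\frac{n+1}{2}$, from the $\mathfrak{s}^{2}_{n,1}$ analysis and propose a triangular reduction killing $\delta_2,\dots,\delta_{(n-3)/2}$. For $\mathfrak{s}^{4}_{n,1}$ the cocycle conditions (through the $\alpha_{2k}$-corrections in $[e_i,x]=e_i+\sum\alpha_{l+1-i}e_l$) already eliminate all but the top form: the only surviving component is $\psi(e_i,e_{n+2-i})=(-1)^i b_{3,n+1}$, so $H^2$ is one-dimensional, there is a single orbit, and the ``main technical obstacle'' you anticipate does not arise. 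Your final conclusion in this branch is nevertheless the correct algebra $\tau^{3}_{n+1,1}(\alpha_4,\alpha_6,\dots,\alpha_{n-1})$, but the argument as written rests on an incorrect description of the cocycle space.
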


\begin{proof}

It is not difficult to see that to get a non-split extension of the solvable Lie algebra $\mathfrak{s}^{4}_{n,1}(\alpha_3, \alpha_4, \dots, \alpha_{n-1})$ it is enough to consider the cases
 $\gamma=-1$ and $\gamma=-2,$ $n$ is odd, $\alpha_{2t-1} =0$ for any $t \ (2 \leq t \leq \frac{n-1}{2})$ .
In these cases we have the followings
$$\dim B^2(\mathfrak{s}^{4}_{n,1}(\alpha_3, \alpha_4, \dots, \alpha_{n-1}),\theta,\mathbb{C})  = \left\{\begin{array}{llll} n-1 & \text{if} & \gamma=-1,\\
n & \text{if} & \gamma = -2, \end{array}\right.$$
and
$$H^2(\mathfrak{s}^{4}_{n,1}(\alpha_3, \alpha_4, \dots, \alpha_{n-1}),\theta,\mathbb{C})  = \left\{\begin{array}{llll} \Big\langle  [\Delta_{3,1}], [\Delta_{n+1,2}-\sum\limits_{i=4}^{n}\alpha_{n+3-i}\Delta_{i,1}]\Big\rangle & \gamma=-1,\\[5mm]
\Big\langle  \Big[\sum\limits_{i=3}^{\lfloor\frac{n+1}{2}\rfloor}(-1)^{i}\Delta_{i,n+4-i}\Big]\Big\rangle & \gamma = -2. \end{array}\right.$$

Thus, we consider following cases

(1) Let $\gamma=-1$.  Denote by
$$ \nabla_1=[\Delta_{3,1}], \quad \nabla_2=\Bigl[\Delta_{n+1,2}-{\sum_{i=4}^{n}\alpha_{n+3-i}\Delta_{i,1}}\Bigl]. $$

Then any $\psi=\langle \delta_1\nabla_1+\delta_2\nabla_2\rangle,$ we have the action of the automorphism group
on the subspace $\langle \psi\rangle$ as $\langle \delta_1^*\nabla_1+\delta_2^*\nabla_2\rangle,$
where $$\delta_1^*=\delta_1c_2,\quad \delta_2^*=\delta_2c_2.$$

It is easy to see that $\operatorname{Ann}(\psi^0)\cap Z(n_{n,1})=0$ if and only if  $\delta_2\neq 0.$
Thus, we have the representative $\langle\alpha_{n}\nabla_1+\nabla_2\rangle$ and obtain the algebra $\mathfrak{s}^{4}_{n+1,1}(\alpha_3, \alpha_4, \dots, \alpha_{n-1}, \alpha_{n}).$

(2) Let  $\gamma=-2.$ Then we have the orbit
$\Big\langle \Big[ \sum\limits_{i=3}^{\lfloor\frac{n+1}{2}\rfloor}(-1)^{i}\Delta_{i,n+4-i}\Big]\Big\rangle$
and obtain the algebra $\tau_{n+1,1}^3(\alpha_4, \alpha_6, \dots \alpha_{n-1}).$

\end{proof}

\section{Conclusion}

In this work we obtain all one-dimensional non-split central extension of the algebra
$n_{n,1}$ and solvable Lie algebras with nilradical $n_{n,1}.$

As it was shown in Section 3, the dimension of extension of the algebra
$n_{n,1}$ may be up to  $\Big\lfloor\frac {n+1} 2 \Big \rfloor$ and one-dimensional extensions are $n_{n+1,1},$ $Q_{n+1}$ and $L_k(2\leq k\leq \Big\lfloor \frac{n}{2}\Big\rfloor).$

We know that solvable Lie algebras with nilradicals $n_{n+1,1},$ $Q_{n+1}$  are
$$\mathfrak{s}^{1}_{n+1,1}(\beta), \quad  \mathfrak{s}^{2}_{n+1,1}, \quad \mathfrak{s}^{3}_{n+1,1}, \quad
\mathfrak{s}^{4}_{n+1,1}(\alpha_3, \alpha_4, \dots, \alpha_{n-1}), \quad
\mathfrak{s}_{n+1,2},$$
$$\tau_{n+1,1}^1(\alpha), \quad \tau_{n+1,1}^2, \quad \tau_{2n,1}^3(\alpha_4, \alpha_6, \dots \alpha_{2n-2}), \quad \tau_{n+1,2}.$$

Moreover, it is not difficult to obtain that codimension of the solvable Lie algebra with nilradical $L_k$ is equal to one and this solvable algebra is isomorphic to the algebra
$\widetilde{L}_k$ for any $k \ (2\leq k\leq \Big\lfloor \frac{n}{2}\Big\rfloor).$

Therefore, we have the following commute diagrams
$$n_{n,1}\xrightarrow{codim =2} \mathfrak{s}_{n,2} \xrightarrow{extension=1}\begin{array}{c}\mathfrak{s}_{n+1,2}, \\ \tau_{n+1,2}.\end{array}$$

$$n_{n,1}\xrightarrow{extension=1} \begin{array}{lcl} n_{n+1,1} & \xrightarrow{codim =2} & \mathfrak{s}_{n+1,2}, \\ Q_{n+1} & \xrightarrow{codim =2} & \tau_{n+1,2}, \\ L_k & \xrightarrow{codim =2} & \text{no algebra}. \end{array}$$

and

$$n_{n,1}\xrightarrow{codim =1} \begin{array}{ccl}
\mathfrak{s}^{1}_{n,1}(\beta) &\xrightarrow{extension=1} & \left\{\begin{array}{l} \mathfrak{s}^{1}_{n+1,1}(\beta), \\ \tau_{n+1,1}^1(\beta), \\ \tau_{n+1,1}^2, \\  \widetilde{L_k},\end{array}\right.\\[7mm]
\mathfrak{s}^{2}_{n,1} &\xrightarrow{extension=1} & \left\{\begin{array}{l}\mathfrak{s}^{2}_{n+1,1}, \\ \mathfrak{s}^{4}_{n+1,1}(0, 0, \dots, 0, 1), \\ \tau_{n+1,1}^3(0, 0, \dots, 0),\end{array}\right.\\[7mm]
\mathfrak{s}^{3}_{n,1} &\xrightarrow{extension=1} & \mathfrak{s}^{3}_{n+1,1},\\[2mm]
\mathfrak{s}^{4}_{n,1}(\alpha_3, \alpha_4, \dots, \alpha_{n-1}) &\xrightarrow{extension=1} & \left\{\begin{array}{l}\mathfrak{s}^{4}_{n+1,1}(\alpha_3, \alpha_4, \dots, \alpha_{n-1}, \alpha_{n}), \\ \tau_{n+1,1}^3(\alpha_4, \alpha_6, \dots \alpha_{n-1}).\end{array}\right.
 \end{array}$$

\

$$n_{n,1}\xrightarrow{extension=1} \begin{array}{lcl} n_{n+1,1} & \xrightarrow{codim =1} & \mathfrak{s}^{1}_{n+1,1}(\beta), \
\mathfrak{s}^{2}_{n+1,1}, \
\mathfrak{s}^{3}_{n+1,1} \
\mathfrak{s}^{4}_{n+1,1}(\alpha_3, \alpha_4, \dots, \alpha_{n-1}), \\ Q_{n+1} & \xrightarrow{codim =1} & \tau_{n+1,1}^1(\alpha), \ \tau_{n+1,1}^2,
\ \tau_{n+1,1}^3(\alpha_4, \alpha_6, \dots \alpha_{n-1}), \\ L_k & \xrightarrow{codim =1} & \widetilde{L_k}. \end{array}$$

\end{document}